\documentclass{amsart}

\usepackage{graphicx}
\usepackage[utf8]{inputenc}
\usepackage[english]{babel}

\usepackage{amsmath,amssymb,amsthm,enumerate,url,appendix,tabularx,stmaryrd,mathrsfs,color,mathtools,MnSymbol}
\usepackage{afterpage}
\usepackage{comment}
\usepackage{caption}

\usepackage[labelformat=simple]{subcaption}

\usepackage{graphicx}
\usepackage{epstopdf}

\renewcommand{\Re}{\operatorname{Re}}

\usepackage[margin=2cm]{geometry}

\theoremstyle{plain}
\newtheorem{theorem}{Theorem}[section]
\newtheorem{lemma}[theorem]{Lemma}
\newtheorem{corollary}[theorem]{Corollary}
\newtheorem{proposition}[theorem]{Proposition}

\newtheorem{assumption}[theorem]{Assumption}
\newtheorem{problem}[theorem]{Problem}

\newtheorem{definition}[theorem]{Definition}
\newtheorem{example}[theorem]{Example}

\newtheorem{remark}[theorem]{Remark}

\numberwithin{equation}{section}
\numberwithin{figure}{section}
\numberwithin{table}{section}

\newcommand{\R}{\mathbb{R}}
\newcommand{\C}{\mathbb{C}}
\newcommand{\N}{\mathbb{N}}

\newcommand{\bigO}{\mathcal{O}}

\newcommand{\microspace}{\mspace{0.5mu}}

\newcommand{\abs}[1]{\left|#1\right|}
\newcommand{\norm}[1]{\left\Vert#1\right\Vert}
\newcommand{\triplenorm}[1]{
  \left\lvert\!\microspace\left\lvert\!\microspace\left\lvert #1 
        \right\rvert\!\microspace\right\rvert\!\microspace\right\rvert_{\Gamma}}

\newcommand{\dualproduct}[3][\Gamma]{\left<#2,#3 \right>_{#1}}
\newcommand{\ltwoproduct}[3][\R^d \setminus \Gamma]{\left(#2,#3\right)_{L^2\left(#1\right)}}
\newcommand{\ltwonorm}[2][\R^d \setminus \Gamma]{\norm{#2}_{L^2\left(#1\right)}}
\newcommand{\colvec}[1]{\begin{pmatrix}#1\end{pmatrix}}
\newcommand{\innerproduct}[3][L^2\left(\R^d\right)]{\left(#2,#3\right)_{#1}}

\newcommand{\domain}{\operatorname{dom}}
\newcommand{\laplace}{\Delta}

\def\TT{\mathcal{T}}

\def\ii{i}
\def\AA{\mathcal{A}}

\newcommand{\usg}{u_{sg}}
\newcommand{\vsg}{v_{sg}}

\newcommand{\uie}{u_{ie}}
\newcommand{\vie}{v_{ie}}
\newcommand{\uiet}{\widetilde{u}_{ie}}
\newcommand{\viet}{\widetilde{v}_{ie}}
\newcommand{\usgh}{{u}_{sg,h}}
\newcommand{\vsgh}{{v}_{sg,h}}

\newcommand{\dd}{\partial_t^{\Delta t}} %discrete derivative

\newcommand{\HH}{\mathcal{H}_h}
\newcommand{\Hpglobal}[1]{H^{#1}\left(\R^d \setminus \Gamma\right)}
\newcommand{\HpLglobal}[1]{H^{#1}_{\laplace}\left(\R^d \setminus \Gamma\right)}
\newcommand{\ltwoglobal}{L^{2}\left(\R^d \setminus \Gamma\right)}
\newcommand{\Hpnorm}[3][\R^d \setminus \Gamma]{\norm{#3}_{H^{#2}\left(#1\right)}}
\newcommand{\tracejump}[1]{{\left\llbracket \gamma #1 \right \rrbracket}}
\newcommand{\normaljump}[1]{{\left\llbracket \partial_n #1 \right \rrbracket}}

\newcommand{\tracemean}[1]{\left\{\!\!\left\{ \gamma #1 \right\}\!\! \right\}}
\newcommand{\normalmean}[1]{\left\{\!\!\left\{ \partial_n #1 \right\}\!\! \right\}}

\newcommand{\bdryinterpY}{J^{Y_h}_{\Gamma}}
\newcommand{\volumeinterpY}{J^{Y_h}_{\Omega^-}}

\newcommand{\eex}{\hbox{}\hfill\rule{0.8ex}{0.8ex}}
\newcommand{\eremk}{\eex}

\iffalse
\usepackage{tikz}
\usepackage{pgfplots}
\usetikzlibrary{pgfplots.groupplots}

\newcommand{\includeTikzOrEps}[1]{\tikzexternalenable 
  \tikzsetnextfilename{#1_img}
  {\include{#1}} \tikzexternaldisable} %\tikzexternalenable
\usetikzlibrary{external}
\tikzexternalize%[force remake]
\tikzexternaldisable
 
\else
\newcommand{\includeTikzOrEps}[1]{\includegraphics{#1_img}}
\fi

%\pgfplotsset{compat=1.12}
%\newcommand{\includeTikzOrEps}[1]{\include{#1} }

\title{Convolution quadrature for the wave equation with a nonlinear impedance boundary condition}
\author{Lehel Banjai}
\address{Maxwell Institute for Mathematical Sciences, School of Mathematical \& Computer Sciences \\
  Heriot-Watt University \\Edinburgh EH14 4AS, UK}
\email{l.banjai@hw.ac.uk}
\author{ Alexander Rieder }
\address{Institute for Analysis and Scientific Computing (Inst. E 101), Vienna University of
Technology\\ Wiedner Hauptstraße 8-10\\ 1040 Wien, Austria}
\email{alexander.rieder@tuwien.ac.at}

\thanks{ Financial support by the Austrian Science Fund (FWF) through the 
doctoral school ``Dissipation and Dispersion in Nonlinear PDEs'' (project W1245, A.R.).}

\date{\today}

\subjclass[2000]{65M38, 65M12, 65R20}
\keywords{Convolution Quadrature Method, Boundary Element Method, Time domain, Wave propagation}

\begin{document}

\begin{abstract}
A rarely exploited advantage of time-domain boundary integral equations compared
to their frequency counterparts is that they can be used to treat certain
nonlinear problems. In this work we investigate the scattering of acoustic
waves by a bounded obstacle with a nonlinear impedance boundary condition. We
describe a boundary integral formulation of the problem and prove without any smoothness assumptions on the solution the convergence of a full discretization: Galerkin in space and convolution quadrature in time. If the solution is sufficiently regular, we prove that the discrete method converges at optimal rates. Numerical evidence in 3D supports the theory.
\end{abstract}
\maketitle

\section{Introduction}
\label{sect:introduction}
We propose and analyse a discretization scheme for the linear wave equation subject to a nonlinear boundary condition. The scheme is based on a boundary element method in space and convolution quadrature in time, using either an implicit Euler or BDF2 scheme for its underlying time-discretization. The motivation for the nonlinear boundary condition comes from nonlinear acoustic boundary conditions as investigated in \cite{graber} and from boundary conditions in electromagnetism obtained by asymptotic approximations of thin layers of nonlinear materials \cite{HadJ}. Another source of interesting nonlinear boundary conditions is the coupling with nonlinear circuits \cite{1266883}. Compared with these references, the nonlinear boundary condition that we use is simple. Nevertheless, to the best of our knowledge there are currently no works in the literature analysing the use of time-domain boundary integral equations for nonlinear problems and the nonlinear condition we consider is sufficiently interesting to require a new theory upon which the analysis of more involved applications can be built.

The case of linear boundary conditions has gathered considerable interest in the recent years, and can be considered well understood \cite{AbbJRT,BamH,Bamh2,bls_fembem,BanLM,MR3273900,MR2875249,laliena_sayas,MR3119712}; see in particular the recent book \cite{sayas_book}.
Much of the analysis available in the literature, starting with the groundbreaking work of Bamberger and Ha Duong \cite{BamH}, is based on estimates in the Laplace domain. In the nonlinear case these are not available and the regularity of the solutions is not well understood.
In order to deal with these difficulties we develop an alternative approach  based on an equivalent formulation as a partial differential equation posed in exotic Hilbert spaces. Structurally, these problems are similar and are inspired by the exotic transmission problems of
Laliena and Sayas \cite{laliena_sayas} formulated in the Laplace domain  --- by taking the Z-transform they indeed become equivalent. A similar approach has recently been  used to investigate the coupling of finite elements and convolution quadrature based boundary elements for the Schrödinger equation in \cite{schroedinger}. The focus on analysing the convolution quadrature scheme in the time domain is also present in \cite{banjai_laliena_sayas_kirchhoff_formulas} and \cite{dominguez_sayas_2013}. This reformulation allows us to investigate stability and convergence using the tools from nonlinear semigroup theory. Due to the difficulty regarding the regularity of the exact solution, most of the paper focuses on showing unconditional convergence for low regularity solutions. In Section~\ref{subsect:high_regularity} we then also give a theorem which guarantees the full convergence rate if the exact solution  possesses sufficient regularity.  The paper concludes with numerical experiments in 3D that support and supplement the theoretical results.

\section{Model problem and notation}
\label{sect:model_problem}
We consider the wave equation with a nonlinear impedance boundary condition.
Let $\Omega^- \subseteq \R^d$ be a bounded Lipschitz domain and denote 
the exterior by $\Omega^+:=\R^d \setminus \overline{\Omega^{-}}$ and the boundary by $\Gamma:=\partial \Omega^-$.
The respective trace operators are denoted by $\gamma^{\pm}$ and the normal
derivatives by $\partial^{\pm}_n$, where the normal vector $n$
is taken in both cases as pointing out of $\Omega^-$. We define jumps and mean values as
\begin{align*}
  \tracejump{u}&:=\gamma^+ u - \gamma^- u, & \normaljump{u}&:=\partial_n^+ u -  \partial_n^- u. \\
  \tracemean{u}&:=\frac{1}{2}\left(\gamma^+ u + \gamma^- u\right),
   & \normalmean{u}&:=\frac{1}{2}\left(\partial_n^+u +  \partial_n^- u \right).
\end{align*}

Given a function $g:\R \to \R$, we consider the following model problem:
\begin{align}
\label{eq:wave_eqn}
  \frac{1}{c^2} \ddot{u}^{tot} &= \laplace u^{tot}, \quad \text{ in } \Omega^+, \\
  \partial_n^+ u^{tot} &= g(\dot{u}^{tot}), \quad \text{ on } \Gamma,
\end{align}
together with the initial condition $u^{tot}(t)=u^{inc}(t)$ for all $t\leq 0$,
where $u^{inc}(t)$ is the incident wave satisfying the wave equation
\begin{align}
  \label{eq:wave_eqn_uinc}
  \frac{1}{c^2} \ddot{u}^{inc}(x,t) &= \laplace u^{inc}(x,t), \quad \forall (x,t) \in \Omega^+ \times \R.
\end{align}
We assume that at time $t = 0$ the incident wave has not reached the scatterer and hence  $u^{inc}(x,t)$ vanishes in a neighborhood of $\Omega^-$ for $t\leq 0$.
We further set $u^{inc}(x,t) \equiv 0$ in $\Omega^-$, $\forall \; t \in \R$.

\begin{remark}
  The definition $u^{inc}(x,t) \equiv 0$ in $\Omega^-$ is somewhat uncommon, but helps simplify
  later calculations.  \eremk
\end{remark}

We will make use of a number of standard function spaces.
We start with the space of all smooth test functions with compact support on an open set $\mathcal{O}$,
which will be denoted by $C_{0}^{\infty}(\mathcal{O})$. 
The usual Lebesgue and Sobolev spaces on
a set $\mathcal{O}$ which is either open in $\R^d$ or a relatively open subset of $\Gamma$
will be denoted by $L^p(\mathcal{O})$ and $H^{s}(\mathcal{O})$ respectively.
By $\dualproduct{u}{v}$ we mean the continuous extension of the (complex) $L^2$-product on $\Gamma$
to $H^{-1/2}(\Gamma) \times H^{1/2}(\Gamma)$, i.e. $\dualproduct{u}{v}:=\int_{\Gamma}{u\overline{v}}$ for $u,v \in L^2(\Gamma)$.
We also define the space
$\HpLglobal{1}:=\left\{u \in \Hpglobal{1}: \laplace u \in L^2(\R^d \setminus \Gamma)\right\}$,
where the Laplacian is meant in the sense of distributions
for test functions in $C_{0}^{\infty}(\R^d \setminus \Gamma)$,
i.e.\ taken separately in $\Omega^+ $ and $\Omega^-$. The norm on this space is given by
$\norm{u}^2_{\HpLglobal{1}}:=\norm{u}^2_{\Hpglobal{1}} + \ltwonorm{\laplace u}^2$.
It is common that estimates depend on some generic constants, therefore we use
the notation $A\lesssim B$ to mean that there exists a constant $C>0$ independent of the main
quantities of interest like time or space discretization parameter, such that $A \leq C B$. 
We write $A \sim B$ for $A \lesssim B$ and $B \lesssim A$.

\begin{assumption}
\label{assumption:nonlinearity_g}
We will make the following assumptions on $g$:
\begin{enumerate}[(i)]
  \item $g \in C^1(\R)$,  \label{assumption:nonlinearity_g_c1}
  \item $g(0)=0$,
  \item $g(\mu) \mu \geq 0$, $\forall \mu \in \R$,  \label{assumption:nonlinearity_gss}
  \item $g'(\mu) \geq 0$, $\forall \mu \in \R$,  \label{assumption:nonlinearity_g_prime}
  \item
    \label{assumption:nonlinearity_g_growth}
    $g$ satisfies the growth condition $\abs{g(\mu)} \leq C (1+ \abs{\mu}^p)$, where
    \begin{align*}
      \begin{cases}
        1<p<\infty & d=2, \\
        1<p\leq \frac{d}{d-2} & d \geq 3.
      \end{cases}
    \end{align*} 
\item \label{assumption:g_is_striclty_monotone} $g$ is strictly monotone, i.e.\ there exists $\beta > 0$ such
  that
  \begin{align}
    \label{def:strictly_monotone}
    \left(g(\lambda) - g(\mu) \right)(\lambda-\mu) \geq \beta \abs{\lambda-\mu}^2, \quad \forall \lambda,\mu \in \R.
  \end{align}     
\end{enumerate}
\end{assumption}

\begin{remark}
  The growth condition is such that the operator $\eta \mapsto g(\eta)$ becomes a bounded operator, 
  i.e.\ we have the estimate $\norm{g(u)}_{H^{-1/2}(\Omega)} \leq C (1+\norm{u}_{H^{1/2}(\Gamma)}^p)$ as will be proved  in Lemma~\ref{lemma:g_is_bounded}. \eremk
\end{remark}

\begin{remark}
  Under the conditions posed on $g$, equation~\eqref{eq:wave_eqn} is well-posed.
  This has been shown with more general boundary conditions in
  \cite{lasiecka_tataru} and \cite{graber}, but with slightly stricter growth 
  conditions on $g$. The well-posedness of the stated problem will follow as a special case
  of Theorem~\ref{thm:semigroup}. \eremk
\end{remark}

\begin{remark}
  Assumption (\ref{assumption:g_is_striclty_monotone}) is needed to obtain explicit error bounds for the spatial discretization.
  As it may be an overly restrictive condition in some cases of interest, in Section~\ref{sect:general_g} we sketch what happens if this assumption is dropped.
\end{remark}

\section{Boundary integral equations and discretization}
In order to discretize the problem, it is more convenient to work with homogeneous initial 
conditions $u(0)=\dot{u}(0)=0$. Therefore, we make the decomposition ansatz $u^{tot}=u^{inc} + u^{scat}$. Since $u^{inc}$ satisfies the wave equation it follows that  $u^{scat}$ satisfies
the wave equation with a homogeneous initial condition, i.e.
\begin{align}
  \label{eq:wave_eqn_scattered}
  \frac{1}{c^2} \ddot{u}^{scat} &= \laplace u^{scat}, \quad \text{ in } \Omega^+ \\
  \partial_n^+ u^{scat} &= g(\dot{u}^{scat} + \dot{u}^{inc}) - \partial_n^+ u^{inc}, \quad \text{ on } \Gamma\\
  u^{scat}(t)&=0, \quad \text{ in } \R^d \text{ for all $t \leq 0$}.
\end{align}

For the rest of the paper, we assume $c=1$ to simplify the notation.
In order to reformulate the differential equation in terms of integral equations on $\Gamma$, 
we will need the following integral operators, the properties of which can be found in most 
books on boundary element methods, e.g.
\cite{book_sauter_schwab, book_steinbach, book_mclean,book_hsiao_wendland}.
\begin{definition}
\label{def:integral_operators}
  For $s \in \C^+:=\left\{s \in \C: \Re(s) > 0 \right\}$,
  the Green function associated with
  the differential operator $\Delta - s^2$ is given by:
  \begin{align*}
    \Phi(z;s):=\begin{cases}
      \frac{\ii}{4} H_0^{(1)}\left(\ii s \abs{z}\right), & \text{ for }d=2, \\
      \frac{e^{-s\abs{z}}}{4 \pi \abs{z}}, & \text{ for } d = 3,
    \end{cases}
  \end{align*}
  where $H_0^{(1)}$ denotes the Hankel function of the first kind and order zero.
  We define the single- and double-layer potentials:
  \begin{align*}
    \left(S(s) \varphi \right)\left(x\right):=\int_{\Gamma}{\Phi(x-y;s) \varphi(y) \;dy} \\
    \left(D(s) \psi \right)\left(x\right):=\int_{\Gamma}{\partial_{n(y)}\Phi(x-y;s) \psi(y) \;dy}.
  \end{align*}

  For all $u\in \HpLglobal{1}$, with $\laplace u - s^2 u =0$, the representation formula 
\begin{align*}
    u(x)=-S(s) \normaljump{u}(x) + D(s) \tracejump{u} (x)
  \end{align*}
holds.
  
  Finally, we define the corresponding boundary integral operators:
\begin{subequations}
  \label{eq:mapping_properties}
  \begin{align}
    V(s): \;\;  &H^{-1/2}(\Gamma)\to H^{1/2}(\Gamma), &
                                V(s)&:=\gamma^{\pm} S(s), \\
    K(s): \;\;  &H^{1/2}(\Gamma)\to H^{1/2}(\Gamma),  &
                             K(s)&:=\tracemean{D(s)}, \\
    K^t(s): \;\; & H^{-1/2}(\Gamma)\to H^{-1/2}(\Gamma), & 
                               K^t(s)&:=\normalmean{S(s)},\\
    W(s):  \;\; &H^{1/2}(\Gamma) \to H^{-1/2}(\Gamma), &
                              W(s)&:=- \partial_n^{\pm} D(s).
  \end{align}
\end{subequations}
\end{definition}

In order to solve the wave equation, we define the Calderón operators
\begin{align}
  \label{eq:def:B}
  B(s)&:=\begin{pmatrix}
    s V(s)  & K(s) \\
    -K^t(s) & s^{-1} W(s)
\end{pmatrix},\\
  B_{\text{imp}}(s)&:=B(s) + \begin{pmatrix}
    0 & -\frac{1}{2}I \\
    \frac{1}{2} I & 0 \end{pmatrix}.
  \label{eq:def:B_imp}
\end{align}

\begin{definition}
\label{def:operational_calculus}
  In this paper, we make use of the operational calculus notation as  
 is common in the literature on convolution quadrature \cite{lubich_94}.
Note that  the corresponding operational calculus dates back much further, see e.g. \cite{mikusinski} and
  \cite{yosida_operational_calculus}.
  Let $K(s): X\to Y$ be a family of bounded linear operators
  analytic for $\Re(s) > 0$, and
  let $\mathscr{L}$ denote the Laplace transform and $\mathscr{L}^{-1}$ its inverse.
  We define
  \begin{align*}
    K(\partial_t)g:= \mathscr{L}^{-1}\big( K(\cdot) \mathscr{L} g \big),
  \end{align*}
  where $g \in \domain\left(K(\partial_t)\right)$ is such, that the 
  inverse Laplace transform exists, and the expression above is well defined.

  This operation has the following important properties:
  \begin{enumerate}[(i)]
    \item For kernels $K_1(s)$ and $K_2(s)$, we have
      $\displaystyle
        K_1(\partial_t) K_2(\partial_t) = \left(K_1 K_2\right)\left(\partial_t \right).
        $
    \item For $K(s):=s$, we have:
      $K(\partial_t) g(t)=g'(t)$, $\forall g\in C^{1}(\R^+)$, with $g(0)=0$.
    \item For $K(s):=s^{-1}$ we have:
      $K(\partial_t) g(t)=\int_{0}^{t}{ g(\xi)\, d\xi}$, $\forall g \in C(\R^+)$.
    \end{enumerate}
    The last point motivates the notation $\partial_t^{-1}$ for the integral, which will
    be important when we introduce a corresponding discrete version.
\end{definition}

Using the definition above, we can easily transfer the representation formula from the
Laplace domain to the time domain to get Kirchoff's representation formula:
If $u\in C^2\left(\R,\HpLglobal{1}\right)$ solves the wave equation in $\R^d \setminus \Gamma$ then it can be written 
as:
\begin{align}
\label{eq:kirchhoff_formula}
  u =-S(\partial_t) \normaljump{u} + D(\partial_t) \tracejump{u}.
\end{align}
This representation formula provides us with the connection between the PDE and the
boundary integral formulation, which we will use for our discretization.
Namely, with
\begin{align}
  \label{eq:continuous_int_eq}
  B_{\text{imp}}(\partial_t) \colvec{\varphi \\ \psi} + \colvec{0 \\ g(\psi + \dot{u}^{inc}) }
&= \colvec{0 \\ -\partial^+_n u^{inc}},
\end{align}
the following equivalence holds.
\begin{enumerate}[(i)]
\item If $u:=u^{scat}$ solves \eqref{eq:wave_eqn_scattered},  then
  $(\varphi,\psi)$, with $\varphi:=-\partial_n^+ u$ and $\psi:=\gamma^+ \dot{u}$,
  solves~\eqref{eq:continuous_int_eq}.
\item If   $(\varphi,\psi)$ solves~\eqref{eq:continuous_int_eq}, then
  $u:=S(\partial_t) \varphi + \partial_t^{-1} D(\partial_t) \psi$ solves
  \eqref{eq:wave_eqn_scattered}.
\end{enumerate}

This statement follows from Kirchhoff's representation formula for the wave equation
(see \cite{banjai_laliena_sayas_kirchhoff_formulas} and the references therein) 
and the definition of the boundary integral operators in Definition~\ref{def:integral_operators}. We will
not go into details here, as we will not directly make use of this result. Instead we will
later prove a discrete analogue in Lemma~\ref{lemma:diff_eq_u_ie}.

We consider two closed sub-spaces $X_h \subseteq H^{-1/2}(\Gamma)$, $Y_h \subseteq H^{1/2}(\Gamma)$ not necessarily
finite dimensional and let $\bdryinterpY: H^{1/2}(\Gamma) \to Y_h$ denote a stable operator with ``good''
approximation properties. This can be the Scott-Zhang operator in its variant based on pure element averaging, see for example \cite[Lemma 3]{aff_hypsing}. An alternative is the $L^2$-projection for low order piecewise polynomials, where the stability depends on the triangulation used with quasiuniformity of the  triangulation
being a sufficient assumption; see 
\cite{crouzeix_thomee_stability_l2projection,bank_yserentant_l2projection} for other sufficient conditions.
The detailed approximation requirements for the projection operator and the discrete spaces can be found in Assumption~\ref{assumption:approximation_spaces}
or Lemma~\ref{lemma:convergence_space_general_g} respectively.

For the rest of the paper, we fix a time step size $\Delta t > 0$ and 
use the abbreviation $t_n:=n\Delta t$. For time discretization we will use the two $A$-stable backward difference formulas BDF1 and BDF2. Applied to  $\dot{u}=f(t,u)$,
with step-size $\Delta t$, these give the recursion
\begin{align*}
  \frac{1}{\Delta t}\sum_{j=0}^{k}{\alpha_j u^{n-j}}&=f(t_n,u^n),
\end{align*}
where $k = 1$ and $\alpha_0 = 1$, $\alpha_1 = -1$ for the one-step BDF1 and $k = 2$ and $\alpha_0 = 1/2$, $\alpha_1 = -2$, $\alpha_2 = 3/2$ for the two-step BDF2 method. Apart from the $A$-stability we will also require the fact that these methods are $G$-stable as shown by Dahlquist \cite{Dah}. In the following $u(t)$ is assumed to be in a Hilbert space with an inner product $\langle \cdot, \cdot \rangle$.

\begin{proposition}\label{prop:Gstability}
  The linear multistep methods BDF1 and BDF2 are $G$-stable. Namely there exists a positive definite matrix $G = (g_{ij})_{i,j = 1,\dots,k}$ such that
\[
\Re \left \langle \sum_{j = 0}^k \alpha_j u^{n-j},u^n \right \rangle 
\geq \|U^n\|^2_G-\|U^{n-1}\|^2_G,
\]
where $U^n = (u^n, \dots, u^{n-k+1})^T$ and 
\[
\|U^n\|_G^2 = \sum_{i = 1}^k\sum_{j = 1}^k g_{ij} \langle u^{n-k+i},u^{n-k+j}\rangle.
\]
\end{proposition}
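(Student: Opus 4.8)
The plan is to prove the proposition constructively for each method separately, by exhibiting the matrix $G$ explicitly and verifying the claimed inequality through a completing-the-square argument. This is the concrete, low-order incarnation of Dahlquist's general equivalence between $A$-stability and $G$-stability: one could in principle simply invoke that theorem together with the well-known $A$-stability of BDF1 and BDF2, but since the later error analysis needs the specific energy $\|\cdot\|_G$, I prefer to display $G$ and check the estimate by hand. Throughout I use that for real scalars $a,b$ one has $\|au+bv\|^2 = a^2\|u\|^2 + 2ab\,\Re\langle u,v\rangle + b^2\|v\|^2$, so that $\Re\langle\cdot,\cdot\rangle$ behaves as a symmetric bilinear form and quadratic expressions in the $u^{n-j}$ can be recognised as perfect squares.

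For BDF1 we have $k=1$, $U^n=(u^n)$, and I would take the scalar $G=(1/2)$, which is trivially positive definite. Writing out the left-hand side gives $\Re\langle u^n-u^{n-1},u^n\rangle = \|u^n\|^2 - \Re\langle u^{n-1},u^n\rangle$, and subtracting $\|U^n\|_G^2-\|U^{n-1}\|_G^2 = \tfrac12\|u^n\|^2-\tfrac12\|u^{n-1}\|^2$ leaves exactly $\tfrac12\|u^n-u^{n-1}\|^2\ge 0$. This establishes the claim for BDF1, in fact with equality up to the nonnegative square.

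For BDF2 we have $k=2$, $U^n=(u^n,u^{n-1})^T$, and the candidate matrix is
\[
G = \begin{pmatrix} 5/4 & -1/2 \\ -1/2 & 1/4 \end{pmatrix}.
\]
Its positive definiteness is immediate from Sylvester's criterion, since $g_{11}=5/4>0$ and $\det G = 5/16-4/16 = 1/16>0$. Writing $x=u^n$, $y=u^{n-1}$, $z=u^{n-2}$, one then checks that
\[
\Re\langle \tfrac32 x - 2y + \tfrac12 z,\,x\rangle - \big(\|U^n\|_G^2 - \|U^{n-1}\|_G^2\big) = \big\|\tfrac12 x - y + \tfrac12 z\big\|^2 = \tfrac14\|x - 2y + z\|^2 \ge 0,
\]
the right-hand side being the squared norm of a second difference, which gives the desired inequality. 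The only real work is locating $G$: matching the coefficients of $\|x\|^2,\|y\|^2,\|z\|^2$ and of the three cross terms $\Re\langle x,y\rangle,\Re\langle x,z\rangle,\Re\langle y,z\rangle$ against a perfect square $\|\delta_0 x+\delta_1 y+\delta_2 z\|^2$ produces a small determined system whose solution is $(\delta_0,\delta_1,\delta_2)=(\tfrac12,-1,\tfrac12)$ and simultaneously fixes the entries of $G$. I expect this bookkeeping, rather than any deep estimate, to be the main (and only mild) obstacle; positive definiteness of both matrices then guarantees that $\|\cdot\|_G$ is a genuine norm, completing the proof.
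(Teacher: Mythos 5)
Your proof is correct, and it takes a genuinely different route from the paper's. The paper disposes of the proposition in one line by observing that BDF methods coincide with their one-leg counterparts and invoking the $A$-stability/$G$-stability equivalence of Dahlquist as presented in Hairer--Wanner (Ch.~V.6, Thm.~6.7); you instead exhibit $G$ explicitly and verify the inequality by completing the square. I checked both identities: for BDF1 the remainder is $\tfrac12\|u^n-u^{n-1}\|^2$, and for BDF2 your matrix (equivalently $\|U^n\|_G^2=\tfrac14\|u^n\|^2+\tfrac14\|2u^n-u^{n-1}\|^2$) leaves exactly $\tfrac14\|u^n-2u^{n-1}+u^{n-2}\|^2$; these are the classical Dahlquist energies, and positive definiteness of your $2\times2$ matrix follows from $\det G=1/16>0$ as you say. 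What your approach buys is a self-contained, citation-free argument together with the explicit $G$; what the paper's buys is brevity and the fact that it applies verbatim to any $A$-stable one-leg method. Two cosmetic points. First, you correctly use the BDF2 coefficients $\alpha_0=3/2$, $\alpha_1=-2$, $\alpha_2=1/2$ attached to $u^n,u^{n-1},u^{n-2}$; the values displayed in the paper ($\alpha_0=1/2$, $\alpha_2=3/2$) are a typo, since they violate the consistency condition $-\sum_j j\alpha_j=1$, so your silent correction is the right reading. Second, your $G$ is written in the ordering where index $1$ corresponds to $u^n$, whereas the statement's formula $\langle u^{n-k+i},u^{n-k+j}\rangle$ places $u^n$ at index $k$, so strictly speaking the entries should be reversed to $g_{11}=1/4$, $g_{22}=5/4$; this relabelling affects nothing.
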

\begin{proof}
  As BDF methods are equivalent to their corresponding one-leg methods, the result follows from \cite[Chapter V.6, Theorem 6.7]{hairer_wanner_2} and its proof.
\end{proof}

Next, we give the discrete analogue to Definition~\ref{def:operational_calculus}; this is standard in the CQ literature(see\cite{lubich_cq1,lubich_cq2,lubich_94}). 
To do this we require a standard result on multistep methods.

\begin{proposition}[{\cite[Chapter V.1, Theorem 1.5]{hairer_wanner_2}}]
As BDF1 and BDF2 are $A$-stable methods their generating function
  $\displaystyle
    \delta(z):=\sum_{j=0}^{k}\alpha_j z^j$
satisifes  $\Re{\delta(z)} > 0$ for $\abs{z}<1$.
\end{proposition}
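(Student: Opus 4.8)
The plan is to translate the hypothesis of $A$-stability into a statement about the image of the open unit disk under $\delta$, exploiting the standard dictionary between a linear multistep method's generating function and its characteristic polynomial. First I would apply the method to the scalar test equation $\dot{y} = \lambda y$ and search for geometric solutions $y^n = \zeta^n$. Since for BDF the right-hand side is sampled only at the new point $t_n$, this yields $\sum_{j=0}^{k} \alpha_j \zeta^{-j} = \mu$ with $\mu := \lambda \Delta t$, i.e.\ $\delta(1/\zeta) = \mu$. Writing $z = 1/\zeta$, preimages $z$ of $\mu$ under $\delta$ with $|z| < 1$ correspond exactly to growth factors $\zeta = 1/z$ with $|\zeta| > 1$. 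Keeping track of this reciprocal correspondence between the two unit disks is the bookkeeping that must be handled carefully.

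Next I would invoke $A$-stability directly. By definition the region of absolute stability contains the open left half-plane $\C^- := \{\mu \in \C : \Re \mu < 0\}$, and a point lies in the stability region only if every growth factor satisfies the root condition $|\zeta| \leq 1$. Now suppose $|z_0| < 1$ and set $\mu_0 := \delta(z_0)$; then $\zeta_0 = 1/z_0$ has $|\zeta_0| > 1$, an unbounded mode that violates the root condition, so $\mu_0$ lies outside the stability region, hence outside $\C^-$. Therefore $\Re \delta(z_0) \geq 0$. As $z_0$ was an arbitrary point of the open disk, this already yields the non-strict bound $\Re \delta(z) \geq 0$ for all $|z| < 1$.

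To upgrade to the strict inequality I would use the minimum principle: $\Re \delta$ is the real part of the non-constant polynomial $\delta$ (degree $k \geq 1$), hence a non-constant harmonic function, which cannot attain an interior minimum. If $\Re \delta(z_0) = 0$ at some interior $z_0$, then $z_0$ would be an interior minimum of the everywhere-nonnegative $\Re \delta$ — equivalently, the open mapping theorem would force $\delta$ to take values with negative real part arbitrarily close to $z_0$ — a contradiction. Hence $\Re \delta(z) > 0$ for all $|z| < 1$. The main obstacle here is conceptual rather than computational: pinning down the $z \leftrightarrow 1/\zeta$ correspondence and the direction of the root condition, together with this harmonic-function argument needed to pass from $\Re \delta \geq 0$ to the strict inequality claimed.

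Finally, since only two concrete methods occur, I would record a direct verification as a cross-check. For BDF1 one has $\delta(z) = 1 - z$, so $\Re \delta(z) = 1 - \Re z \geq 1 - |z| > 0$. For BDF2 one has $\delta(z) = \tfrac{3}{2} - 2z + \tfrac{1}{2} z^2 = \tfrac{1}{2}(1-z)(3-z)$, and a short computation rearranges its real part into a manifestly positive form, $\Re \delta(z) = (1 - \Re z)^2 + \tfrac{1}{2}\left(1 - \abs{z}^2\right) > 0$ whenever $\abs{z} < 1$, the second term being strictly positive inside the disk. This elementary route confirms the proposition for the two methods actually used, while the general argument above is the one underlying the cited theorem of Hairer and Wanner.
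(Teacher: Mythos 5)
Your argument is correct, but note that the paper does not actually prove this proposition at all: it is stated as a quotation of \cite[Chapter V.1, Theorem 1.5]{hairer_wanner_2}, so there is no in-paper proof to compare against. Your reconstruction is the standard one and it is sound: the substitution $z=1/\zeta$ correctly converts the characteristic equation $\sum_{j}\alpha_j\zeta^{-j}=\mu$ for the test equation into $\delta(z)=\mu$, the root condition plus $A$-stability gives $\Re\delta(z)\geq 0$ on the open disk, and the minimum principle for the non-constant harmonic function $\Re\delta$ upgrades this to strict inequality. The only loose end in the general argument is the point $z=0$ (where $\zeta=1/z$ is undefined), but this is harmless: $\Re\delta(0)\geq 0$ follows by continuity from the punctured disk, after which the minimum principle applies everywhere. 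What makes your write-up fully watertight for the purposes of this paper is the closing direct verification, since only BDF1 and BDF2 are ever used: $\Re(1-z)\geq 1-\abs{z}>0$ is immediate, and your BDF2 identity $\Re\bigl(\tfrac32-2z+\tfrac12 z^2\bigr)=(1-\Re z)^2+\tfrac12\bigl(1-\abs{z}^2\bigr)$ checks out by expanding both sides, giving strict positivity on $\abs{z}<1$. Either the general argument or the two-line computation would suffice; presenting both is a reasonable way to justify a result the paper leaves to the literature.
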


\begin{definition}
\label{def:cq_opearational_calculus}
Analogous to the Laplace transform $\mathscr{L}$, we define the $Z$-transform $\mathscr{Z}$ 
of a sequence $g=(g_{n})_{n=0}^{\infty}$ as the power series
$\displaystyle
  \left(\mathscr{Z}g \right):=\sum_{n=0}^{\infty}{ g_n z^n}$.
We will also often use the shorthand $\widehat{g}:=\mathscr{Z}(g)$. 

Let $K(s)$ again be an analytic family of bounded linear operators in the right half plane.
For a function $u \in L^{\infty}(X)$ with $u(t)=0$ for all $t\leq 0$ on some Banach space $X$,
we define
\begin{align*}
  \left[K(\dd) u \right]^n:=\sum_{j=0}^{n}{K_j u\left(t_n - t_j\right)},
\end{align*}
where the weights $K_j$ are defined as the coefficients satisfying
$K\left(\frac{\delta(z)}{\Delta t} \right)=:\sum_{n=0}^{\infty}{K_n z^n}$.
\end{definition}
\begin{remark}
  We will use the same notation if $u:=(u_n)_{n\in\N}$ is a sequence of values in $X$, by identifying
  $u$ with the piecewise constant function. The connection with Definition~\ref{def:operational_calculus}
  can be seen by applying the $\mathscr{Z}$ transform to the discrete convolution:
  \begin{align*}
    \mathscr{Z}\left(K(\dd) u\right)&=K\left(\frac{\delta(z)}{\Delta t}\right) \mathscr{Z}(u).
  \end{align*} \eremk
\end{remark}
This operational calculus then implies the convolution quadrature discretization
of \eqref{eq:continuous_int_eq}, by replacing $B(\partial_t)$ with $B(\dd)$ resulting in the following problem.

\begin{problem}
\label{problem:fully_discrete_int_eq}
For all $n \in \N$, find $(\varphi,\psi):=(\varphi^n,\psi^n)_{n\in \N} \subseteq X_h \times Y_h$ such that:
\begin{align}
\label{eq:fully_discrete_int_eq}
  \dualproduct{\left[B_{\text{imp}}(\dd ) \colvec{\varphi \\ \psi}\right]^n}{\colvec{\xi \\ \eta}} + \dualproduct{g(\psi^n + \bdryinterpY\dot{u}^{inc}(t_n))}{\eta}
&= \dualproduct{-\partial^+_n u^{inc}(t_n)}{\eta}
 \quad  \quad \forall (\xi,\eta) \in X_h \times Y_h.
\end{align}
\hfill\qed
\end{problem}

% Throughout this paper, we consider time discretizations by either implicit Euler or BDF-2 convolution quadrature,
% which are given by the following sets of coefficients:

% \vspace{.5cm}
% \begin{tabular}{|l| l l l l|}
%   \hline
%   Method& k & $\alpha_0,\dots\alpha_k$ & $\beta_0 \dots, \beta_k$& $\delta(z)$ \\
%   \hline  
%   Implicit Euler & 1 & 1, -1& 1, 0 & $1-z$ \\
%   BDF2 & 2 & $1$, $-\frac{4}{3}$, $\frac{1}{2}$& $\frac{2}{3}$, $0$, $0$ & $(1-z)+ \frac{1}{2}(1-z)^2$ \\
%   \hline
% \end{tabular}
% \vspace{.5cm}

Since we will often be working with pairs $(\varphi,\psi) \in H^{-1/2}(\Gamma) \times H^{1/2}(\Gamma)$ we define the product norm
\begin{align}
\label{eq:def_triplenorm}
  \triplenorm{(\varphi,\psi)}^2:=\norm{\varphi}_{H^{-1/2}(\Gamma)}^2 + \norm{\psi}_{H^{1/2}(\Gamma)}^2.
\end{align}

\section{Well posedness}
\label{sect:well_posedness}
In this section, we investigate the existence and uniqueness of solutions to Problem~\ref{problem:fully_discrete_int_eq}.
We start with some basic properties of  the operator induced by $g$ and the operator $B_{\text{imp}}$. 
\begin{lemma}
\label{lemma:g_is_bounded}
The operator $g: H^{1/2}(\Gamma) \to H^{-1/2}(\Gamma)$ is a 
\emph{bounded (nonlinear) operator},
with
\begin{align*}
  \norm{g(\eta)}_{H^{-1/2}(\Gamma)} &\leq C\left(1+ \norm{\eta}_{H^{1/2}(\Gamma)}^{p}\right),
\end{align*}
where $p$ is the bound from Assumption~\ref{assumption:nonlinearity_g}(\ref{assumption:nonlinearity_g_growth})
and the constant $C>0$ depends on $\Gamma$ and $g$.
\end{lemma}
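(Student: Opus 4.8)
The plan is to combine the duality $H^{-1/2}(\Gamma) = \left(H^{1/2}(\Gamma)\right)^{*}$ with a Sobolev embedding of $H^{1/2}(\Gamma)$ into Lebesgue spaces, so that the whole statement reduces to the pointwise growth bound in Assumption~\ref{assumption:nonlinearity_g}(\ref{assumption:nonlinearity_g_growth}). Since $\Gamma$ is a compact Lipschitz manifold of dimension $d-1$, the trace Sobolev embedding yields $H^{1/2}(\Gamma) \hookrightarrow L^{q}(\Gamma)$ with the critical exponent $q = \tfrac{2(d-1)}{d-2}$ when $d \geq 3$, and with every finite $q$ when $d = 2$. Writing $q'$ for the conjugate exponent, I would first estimate, for an arbitrary $v \in H^{1/2}(\Gamma)$, by Hölder's inequality and the embedding,
\[
\left|\dualproduct{g(\eta)}{v}\right| = \left|\int_{\Gamma} g(\eta)\,\overline{v}\right|
\leq \norm{g(\eta)}_{L^{q'}(\Gamma)}\,\norm{v}_{L^{q}(\Gamma)}
\lesssim \norm{g(\eta)}_{L^{q'}(\Gamma)}\,\norm{v}_{H^{1/2}(\Gamma)}.
\]
Taking the supremum over $\norm{v}_{H^{1/2}(\Gamma)} \leq 1$ then gives $\norm{g(\eta)}_{H^{-1/2}(\Gamma)} \lesssim \norm{g(\eta)}_{L^{q'}(\Gamma)}$, so it remains to control the latter $L^{q'}$ norm.

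The second step is to feed in the growth condition. From $\abs{g(\mu)} \leq C\left(1+\abs{\mu}^{p}\right)$ one gets $\int_{\Gamma}\abs{g(\eta)}^{q'} \lesssim \abs{\Gamma} + \int_{\Gamma}\abs{\eta}^{p q'}$, hence $\norm{g(\eta)}_{L^{q'}(\Gamma)} \lesssim 1 + \norm{\eta}_{L^{p q'}(\Gamma)}^{p}$. This is precisely the classical statement that the Nemytskii (superposition) operator induced by the continuous function $g$ maps $L^{p q'}(\Gamma) \to L^{q'}(\Gamma)$ in a bounded way, which needs only continuity and the growth bound, both guaranteed by Assumption~\ref{assumption:nonlinearity_g}.

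The crucial point, where the precise form of the exponent range is used, is the verification that $p q' \leq q$: this makes the finite-measure embedding $L^{q}(\Gamma) \hookrightarrow L^{p q'}(\Gamma)$ available, so that $\norm{\eta}_{L^{p q'}(\Gamma)} \lesssim \norm{\eta}_{L^{q}(\Gamma)} \lesssim \norm{\eta}_{H^{1/2}(\Gamma)}$. A short computation shows $p q' \leq q \iff p \leq q-1$, and with $q = \tfrac{2(d-1)}{d-2}$ one finds $q-1 = \tfrac{d}{d-2}$, which is exactly the admissible upper bound in Assumption~\ref{assumption:nonlinearity_g}(\ref{assumption:nonlinearity_g_growth}); for $d=2$ any $1<p<\infty$ is handled by choosing $q$ large (so $q'$ close to $1$). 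Chaining the three estimates delivers $\norm{g(\eta)}_{H^{-1/2}(\Gamma)} \lesssim 1 + \norm{\eta}_{H^{1/2}(\Gamma)}^{p}$, as claimed.

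I expect the only real subtlety to be the bookkeeping of exponents and the correct invocation of the Sobolev embedding on a Lipschitz boundary rather than on an open set in $\R^{d}$. The equality $q-1 = \tfrac{d}{d-2}$ reveals that the growth condition was calibrated to make the argument close at the critical (endpoint) case, so there is no slack: the embedding must be used sharply and the permitted range of $p$ cannot be enlarged without losing boundedness into $H^{-1/2}(\Gamma)$.
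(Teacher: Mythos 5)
Your proposal is correct and follows essentially the same route as the paper's proof: a duality estimate via H\"older's inequality with the conjugate pair $q=\tfrac{2d-2}{d-2}$, $q'=\tfrac{2d-2}{d}$, the Sobolev embedding $H^{1/2}(\Gamma)\hookrightarrow L^{q}(\Gamma)$, and the growth condition to bound the Nemytskii operator, with the observation that $pq'\leq q$ is exactly the admissibility condition on $p$. The paper's version is terser (it simply fixes $p',q'$ so that $p'$ and $pq'$ lie in the admissible range), but the exponent bookkeeping you spell out is the same.
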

\begin{proof}
  We note that the following Sobolev embeddings hold (see \cite[Theorem 7.57]{adams_sobolev_spaces}):
  \begin{align}
    \label{eq:sobolev_embedding}
    H^{1/2}(\Gamma) &\subseteq  L^{p'}(\Gamma) \quad 
    \begin{cases}
       \forall 1\leq p'<\infty & \text{ for } d=2, \\
       \forall 1\leq p'\leq \frac{2d-2}{d-2} & \text{ for } d\geq 3.
    \end{cases}
  \end{align}
  Let $p$ be as in Assumption~\ref{assumption:nonlinearity_g}~(\ref{assumption:nonlinearity_g_growth}).
  Fix $p',q'$ such that $1/p' + 1/q' = 1$ and 
  both $p'$ and $pq'$  are in the admissible range of the Sobolev embedding. The case $d=2$ is clear. For $d\geq 3$ we use $p'=\frac{2d-2}{d-2}$, $q':=\frac{2d-2}{d}$.
  
  For $\eta,\xi \in H^{1/2}(\Gamma)$ we calculate:
  \begin{align*}
    \int_{\Gamma}{g(\eta)\xi}
    &\leq \norm{g(\eta)}_{L^{q'}(\Gamma)} \norm{\xi}_{L^{p'}(\Gamma)}
    \lesssim \left(1+\norm{\eta}_{L^{q'p}(\Gamma)}^{p} \right) \norm{\xi}_{H^{1/2}(\Gamma)} \\
    &\lesssim \left(1+ \norm{\eta}_{H^{1/2}(\Gamma)}^{p} \right) \norm{\xi}_{H^{1/2}(\Gamma)}.
  \end{align*}  
\end{proof}

The operator $B_{\text{imp}}(s)$ is elliptic in the frequency domain:
\begin{lemma}
  \label{lemma:coercivity_calderon_ld}
  There exists a constant $\beta > 0$, depending only on $\Gamma$, such that
  \begin{align}
    \label{eq:corecivity_calderon_ld}
    \Re\dualproduct{B_{\text{imp}}(s) \colvec{\varphi \\ \psi }} {\colvec{\varphi \\ \psi}}
 &\geq \beta \min(1,\abs{s}^2)\frac{\Re(s)}{\abs{s}^2}  \triplenorm{(\varphi, \psi)}^2.
  \end{align}
\end{lemma}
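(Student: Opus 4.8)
The plan is to realise this quadratic form as a positive volume energy, following the classical Laplace-domain calculus of Bamberger--Ha Duong and Laliena--Sayas. To the pair $(\varphi,\psi)$ I associate the scaled potential
\[
  u:=S(s)\varphi + s^{-1}D(s)\psi,
\]
which lies in $\HpLglobal{1}$, solves $\laplace u - s^2 u =0$ in $\R^d\setminus\Gamma$, and decays exponentially since $\Re(s)>0$, so that Green's identity is legitimate on the unbounded component $\Omega^+$. The factor $s^{-1}$ in front of the double layer is chosen exactly to match the $s$ and $s^{-1}$ weights in $B(s)$; with it the jump relations for the layer potentials give $\normaljump{u}=-\varphi$ and $\tracejump{u}=s^{-1}\psi$.

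The heart of the proof is an algebraic identity. Using the jump relations for $S(s)$ and $D(s)$ together with the definitions of $V,K,K^t,W$, one checks that the two components of $B_{\text{imp}}(s)(\varphi,\psi)$ are exactly the interior trace and the exterior normal derivative of $u$, i.e.\ $\bigl(B_{\text{imp}}(s)(\varphi,\psi)\bigr)_1 = s\,\gamma^- u$ and $\bigl(B_{\text{imp}}(s)(\varphi,\psi)\bigr)_2 = -\partial_n^+ u$; here the impedance shift $\pm\tfrac12 I$ is precisely what converts the two-sided mean-value traces into one-sided traces. Substituting these together with $\varphi=-\normaljump{u}$ and $\psi=s\tracejump{u}$ into the quadratic form, the mixed boundary terms cancel and Green's first identity on $\Omega^-$ and $\Omega^+$ leaves
\[
  \dualproduct{B_{\text{imp}}(s)\colvec{\varphi\\\psi}}{\colvec{\varphi\\\psi}}
  =\overline{s}\int_{\R^d\setminus\Gamma}\bigl(\abs{\nabla u}^2 + s^2\abs{u}^2\bigr).
\]
Taking real parts and using $\Re(\overline{s})=\Re(s)$ and $\Re(\overline{s}\,s^2)=\abs{s}^2\Re(s)$ yields the manifestly nonnegative expression $\Re(s)\bigl(\ltwonorm{\nabla u}^2+\abs{s}^2\ltwonorm{u}^2\bigr)$. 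This absorbs both the cross terms and the antisymmetric impedance perturbation at once, so $B$ and $B_{\text{imp}}$ need not be treated separately.

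It then remains to bound $\triplenorm{(\varphi,\psi)}^2$ from above by the energy $\mathcal E:=\ltwonorm{\nabla u}^2+\abs{s}^2\ltwonorm{u}^2$ with sharp dependence on $s$. For this I use $s$-uniform trace inequalities: from $\norm{\normaljump{u}}_{H^{-1/2}(\Gamma)}\lesssim \ltwonorm{\nabla u}+\ltwonorm{\laplace u}$ and $\laplace u=s^2u$ one gets $\norm{\varphi}_{H^{-1/2}(\Gamma)}\lesssim \max(1,\abs{s})\,\mathcal E^{1/2}$, while the Dirichlet trace theorem gives $\norm{\psi}_{H^{1/2}(\Gamma)}=\abs{s}\,\norm{\tracejump{u}}_{H^{1/2}(\Gamma)}\lesssim \abs{s}\,\norm{u}_{H^1(\R^d\setminus\Gamma)}$ with $\norm{u}_{H^1(\R^d\setminus\Gamma)}^2\le \min(1,\abs{s}^2)^{-1}\mathcal E$. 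Adding the two contributions yields $\triplenorm{(\varphi,\psi)}^2\lesssim \abs{s}^2\min(1,\abs{s}^2)^{-1}\mathcal E$, equivalently $\mathcal E\gtrsim \min(1,\abs{s}^2)\abs{s}^{-2}\triplenorm{(\varphi,\psi)}^2$; multiplying by $\Re(s)$ reproduces the claimed factor $\beta\min(1,\abs{s}^2)\tfrac{\Re(s)}{\abs{s}^2}$.

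I expect the main obstacle to be bookkeeping rather than anything conceptual: getting the one-sided traces, the signs, and the $\pm\tfrac12$ impedance terms exactly right so that the mixed boundary terms genuinely cancel in the identity above, and then tracking the powers of $\abs{s}$ through the two trace estimates carefully enough to land on the sharp weight $\min(1,\abs{s}^2)\Re(s)/\abs{s}^2$ rather than a looser bound. One must also confirm that the trace constants are independent of $s$, so that $\beta$ depends only on $\Gamma$ as asserted.
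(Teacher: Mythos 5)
Your argument is correct, but it takes a different route from the paper. The paper's proof is a two-line reduction: it cites the coercivity of $B(s)$ from \cite[Lemma~3.1]{bls_fembem} and observes that the bilinear form induced by $B_{\text{imp}}(s)-B(s)=\bigl(\begin{smallmatrix}0&-1/2\\1/2&0\end{smallmatrix}\bigr)$ is skew-Hermitian, so its real part vanishes and the estimate transfers verbatim. You instead reprove the whole statement from first principles via the potential $u=S(s)\varphi+s^{-1}D(s)\psi$, noting that the $\pm\tfrac12$ impedance shifts turn the averaged traces in $B(s)$ into the one-sided traces $s\gamma^-u$ and $-\partial_n^+u$, and then running the Bamberger--Ha~Duong energy argument directly on $B_{\text{imp}}$; the trace estimates with the weights $\max(1,\abs{s})$ and $\min(1,\abs{s}^2)^{-1/2}$ are tracked correctly and land on the stated factor. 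This is essentially the proof of the cited lemma with the impedance perturbation absorbed at the level of the jump relations rather than handled as an afterthought --- self-contained, at the cost of redoing known work. One small imprecision: your intermediate complex identity is not exact. The mixed terms $\gamma^-u$ against $\partial_n^+u$ combine to $\bar s w-\overline{\bar s w}$, which is purely imaginary but not zero, and on $\Omega^-$ versus $\Omega^+$ the volume terms carry the weights $(s,\bar s\abs{s}^2)$ and $(\bar s,\bar s s^2)$ respectively rather than a uniform $\bar s(\abs{\nabla u}^2+s^2\abs{u}^2)$. Since all of these have real part $\Re(s)$ times the corresponding nonnegative quantity, the real-part identity $\Re\dualproduct{B_{\text{imp}}(s)(\varphi,\psi)}{(\varphi,\psi)}=\Re(s)\bigl(\ltwonorm{\nabla u}^2+\abs{s}^2\ltwonorm{u}^2\bigr)$ that you actually use is correct, and the conclusion stands.
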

\begin{proof}
  The analogous estimate to \eqref{eq:corecivity_calderon_ld} for the operator $B(s)$ was
  shown in \cite[Lemma 3.1]{bls_fembem}. Since the bilinear form induced by $B_{\text{imp}}(s)- B(s)$ is skew-hermitean, this implies
  \eqref{eq:corecivity_calderon_ld}. 
\end{proof}

The solvability of the discrete system (\ref{eq:fully_discrete_int_eq}) will be based on the theory of monotone operators. We summarize the 
main result in the following proposition
\begin{proposition}[Browder and Minty,{\cite[Chapter II, Theorem 2.2]{showalter_book}}]
\label{prop:browder_minty}
Let $X$ be a real separable and reflexive Banach space and $A: X \to X'$ be a bounded, continuous, coervice
and monotone map from $X$ to its dual space(not necessarily linear),
i.e.,  $A$ satisfies:
\begin{itemize}
  \item $A: X \to X'$ is continuous,
  \item the set $A(M)$ is bounded in $X'$ for all bounded sets $M \subseteq X$,
  \item $\displaystyle \lim_{\norm{u} \to \infty}{\frac{ \dualproduct[X' \times X]{ A(u)}{u}}{\norm{u}}} = \infty$,
  \item $\displaystyle \dualproduct[X'\times X]{A(u)-A(v)}{u-v} \geq 0$ for all $u,v \in X$.
\end{itemize}

Then the variational equation 
\begin{align*}
  \dualproduct[X' \times X]{A(u)}{v}&=\dualproduct[X' \times X]{f}{v},  \quad \quad \forall v \in X,
\end{align*}
has at least one solution for all $f \in X'$.
If the operator is strongly monotone, i.e., there exists $\beta > 0$ such that
\begin{align*}
  \dualproduct[X'\times X]{A(u)-A(v)}{u-v} &\geq \beta \norm{u-v}^2_{X} \quad \text{ for all $u,v \in X$},
\end{align*}
then the solution is unique.
\end{proposition}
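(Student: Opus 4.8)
The plan is to prove surjectivity of $A$ by the classical Galerkin argument, combining Brouwer's fixed point theorem at the finite-dimensional level with Minty's monotonicity trick to pass to the limit; uniqueness under strong monotonicity will then be a one-line consequence. Since $X$ is separable, I would first fix an increasing chain of finite-dimensional subspaces $X_1 \subseteq X_2 \subseteq \cdots$ with $\overline{\bigcup_n X_n} = X$, each equipped with a basis $w_1, \dots, w_{d_n}$. For a given $f \in X'$ the Galerkin problem is to find $u_n \in X_n$ with $\dualproduct[X' \times X]{A(u_n)}{v} = \dualproduct[X' \times X]{f}{v}$ for all $v \in X_n$.

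To solve this finite-dimensional problem I would identify $X_n$ with $\R^{d_n}$ through its basis and consider the continuous map $F_n : \R^{d_n} \to \R^{d_n}$ whose $j$-th component is $\dualproduct[X' \times X]{A(u_n) - f}{w_j}$, with $u_n$ the element corresponding to the coefficient vector; contracting $F_n$ against the coefficient vector reproduces exactly $\dualproduct[X' \times X]{A(u_n) - f}{u_n}$. Invoking coercivity, this quantity becomes strictly positive once $\norm{u_n}_X$ is large enough, because $\dualproduct[X' \times X]{A(u_n)}{u_n}/\norm{u_n}_X \to \infty$ while $\dualproduct[X' \times X]{f}{u_n}$ stays controlled by $\norm{f}_{X'}\norm{u_n}_X$. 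A standard corollary of Brouwer's fixed point theorem then furnishes a zero of $F_n$, i.e.\ a Galerkin solution $u_n$, and the same coercivity bound shows that $\norm{u_n}_X$ is bounded uniformly in $n$.

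By reflexivity I would extract a subsequence (not relabelled) with $u_n \rightharpoonup u$ in $X$, and since $A$ maps bounded sets to bounded sets, a further subsequence gives $A(u_n) \rightharpoonup \chi$ in $X'$. Letting $n \to \infty$ in the Galerkin identity and using density of $\bigcup_n X_n$ shows $\chi = f$. The crux — and the step I expect to be the main obstacle — is to identify $\chi$ with $A(u)$, which cannot be done by passing to the limit inside $A$, since $A$ is merely continuous and not weakly sequentially continuous. Here Minty's trick enters: taking $v = u_n$ in the Galerkin identity gives $\dualproduct[X' \times X]{A(u_n)}{u_n} = \dualproduct[X' \times X]{f}{u_n} \to \dualproduct[X' \times X]{f}{u}$, and the monotonicity inequality $\dualproduct[X' \times X]{A(u_n) - A(v)}{u_n - v} \ge 0$ can then be passed to the limit for every fixed $v \in \bigcup_n X_n$, yielding $\dualproduct[X' \times X]{f - A(v)}{u - v} \ge 0$; density together with continuity of $A$ extends this to all $v \in X$. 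Choosing $v = u - t w$ with $t > 0$, dividing by $t$, and letting $t \to 0^+$ while using continuity of $A$ along the segment gives $\dualproduct[X' \times X]{f - A(u)}{w} \ge 0$ for every $w \in X$, whence $A(u) = f$.

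Finally, uniqueness under strong monotonicity is immediate: if $A(u) = A(v) = f$, then $\beta \norm{u - v}_X^2 \le \dualproduct[X' \times X]{A(u) - A(v)}{u - v} = 0$, forcing $u = v$.
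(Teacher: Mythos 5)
Your argument is correct: it is the classical Galerkin--Brouwer--Minty proof of the surjectivity theorem, and every step (the sign condition from coercivity on a large sphere giving a zero of $F_n$ via the standard corollary of Brouwer's theorem, the uniform a priori bound on $\norm{u_n}_X$, weak compactness from reflexivity plus boundedness of $A$, and the identification $\chi = A(u)$ via Minty's trick with the radial perturbation $v = u - tw$ and hemicontinuity) is sound. The paper, however, does not prove the existence part at all: its ``proof'' consists of the remark that the statement is a slight reformulation of the cited result in Showalter's book, followed by exactly the one-line uniqueness argument from strong monotonicity that you also give. So rather than taking a different route from the paper, you have supplied the standard proof that the paper delegates to the reference. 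Two small points are worth making explicit if this were to be written out in full: first, the Brouwer corollary is applied with respect to the Euclidean norm on the coefficient space, which is equivalent to the $X$-norm on each finite-dimensional $X_n$, so the coercivity-based sign condition transfers; second, the monotonicity inequality should first be passed to the limit only for $v$ in the dense union $\bigcup_n X_n$, since that is where the convergence $\left<A(u_n),v\right> \to \left<\chi,v\right> = \left<f,v\right>$ is directly available from the Galerkin identities, with the extension to all $v \in X$ then following from the assumed norm-continuity of $A$. Your treatment already indicates both of these, so there is no gap.
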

\begin{proof}
  The first part is just a slight reformulation of \cite[Theorem 2.2]{showalter_book}, based on some of the
  equivalences stated in the same chapter. Uniqueness  follows by considering
  two solutions $u,v$ and applying the strong monotonicity to conclude $\norm{u-v}_X=0$.
\end{proof}

\begin{theorem}
  \label{thm:existence_discrete}
  Let $\Delta t >0 $ and $(X_h,Y_h) \subseteq H^{-1/2}(\Gamma) \times H^{1/2}(\Gamma)$ be closed subspaces. 
  Then the discrete system of equations \eqref{eq:fully_discrete_int_eq} has a unique solution
  in the space $X_h \times Y_h$ for all $n \in \N$.
\end{theorem}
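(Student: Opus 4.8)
The plan is to solve \eqref{eq:fully_discrete_int_eq} by induction on the time index $n$, exploiting the lower-triangular Toeplitz structure of convolution quadrature. Expanding the discrete convolution according to Definition~\ref{def:cq_opearational_calculus}, one has $\left[B_{\text{imp}}(\dd)(\varphi,\psi)\right]^n = \sum_{j=0}^{n} B_j\,(\varphi^{n-j},\psi^{n-j})$, where $B_j$ are the CQ weights of $B_{\text{imp}}$. The terms with $j \geq 1$ involve only the past values $(\varphi^m,\psi^m)$ with $m < n$, which are known by the induction hypothesis; I would move them to the right-hand side as a fixed datum. The only unknown left on the left-hand side is the $j=0$ contribution $B_0(\varphi^n,\psi^n)$. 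Since the weights satisfy $B_0 = B_{\text{imp}}\!\left(\delta(0)/\Delta t\right)$ and $A$-stability forces $\Re\delta(0) > 0$ with $\delta(0)$ real, the frequency $s_0 := \delta(0)/\Delta t$ is a \emph{positive real number}; thus the current-step operator is exactly $B_{\text{imp}}(s_0)$ at a real, positive $s$, to which Lemma~\ref{lemma:coercivity_calderon_ld} applies with a genuine (real) coercivity constant.

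This recasts the $n$-th step as a single stationary variational problem on the real Hilbert space $X := X_h \times Y_h$: find $(\varphi^n,\psi^n) \in X$ with $\dualproduct{A(\varphi^n,\psi^n)}{(\xi,\eta)} = \dualproduct{\ell^n}{(\xi,\eta)}$ for all $(\xi,\eta)\in X$, where
\[
 A(\varphi,\psi) := B_{\text{imp}}(s_0)(\varphi,\psi) + \colvec{0 \\ g\!\left(\psi + c^n\right)}, \qquad c^n := \bdryinterpY \dot{u}^{inc}(t_n),
\]
$c^n$ is a fixed datum, and $\ell^n$ collects the right-hand side of \eqref{eq:fully_discrete_int_eq} together with the known history. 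I would then verify the hypotheses of Proposition~\ref{prop:browder_minty} for $A$. Boundedness and continuity follow from the fact that $B_{\text{imp}}(s_0)$ is a bounded linear operator and from Lemma~\ref{lemma:g_is_bounded}, which controls the growth of the Nemytskii operator $\psi \mapsto g(\psi+c^n)$.

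For coercivity and monotonicity I would split the pairing into its $B_{\text{imp}}$-part and its $g$-part. Lemma~\ref{lemma:coercivity_calderon_ld} at the real frequency $s_0$ gives $\Re\dualproduct{B_{\text{imp}}(s_0)(\varphi,\psi)}{(\varphi,\psi)} \geq c_0\,\triplenorm{(\varphi,\psi)}^2$ for some $c_0 > 0$; this quadratic term dominates. Writing $\dualproduct{g(\psi+c^n)}{\psi} = \dualproduct{g(\psi+c^n)-g(c^n)}{\psi} + \dualproduct{g(c^n)}{\psi}$, the first summand is nonnegative by the monotonicity of $g$ (Assumption~\ref{assumption:nonlinearity_g}(\ref{assumption:nonlinearity_g_prime})) and the second is a bounded linear functional, so the quadratic growth of $B_{\text{imp}}$ ensures coercivity of $A$. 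For monotonicity, testing with the difference of two arguments, the $B_{\text{imp}}$-part again yields $\geq c_0\,\triplenorm{\cdot}^2$ while the $g$-part is nonnegative by pointwise monotonicity; hence $A$ is in fact strongly monotone, and Proposition~\ref{prop:browder_minty} delivers both existence and uniqueness at step $n$, closing the induction.

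The main obstacle is twofold. First, the spaces and the pairing $\dualproduct{\cdot}{\cdot}$ are complex, whereas Proposition~\ref{prop:browder_minty} is a real-variable statement; I would resolve this by regarding $H^{\pm 1/2}(\Gamma)$ as \emph{real} Hilbert spaces (the data $g$ and $u^{inc}$ are real-valued) and by working throughout with the real part of the pairing, which is precisely the quantity estimated in Lemma~\ref{lemma:coercivity_calderon_ld}. Second, and more delicate, is establishing genuine \emph{continuity} (as opposed to mere boundedness) of the superposition operator $\psi \mapsto g(\psi+c^n)$ from $H^{1/2}(\Gamma)$ into $H^{-1/2}(\Gamma)$: boundedness is Lemma~\ref{lemma:g_is_bounded}, but continuity requires the standard Nemytskii-operator theory on $L^p(\Gamma)$ under the growth bound of Assumption~\ref{assumption:nonlinearity_g}(\ref{assumption:nonlinearity_g_growth}), combined with the Sobolev embeddings \eqref{eq:sobolev_embedding}. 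Should one wish to bypass full continuity, the monotone structure allows replacing it by hemicontinuity, which is immediate from $g \in C^1$.
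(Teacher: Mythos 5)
Your proposal is correct and follows essentially the same route as the paper's proof: induction on $n$, isolating the leading convolution weight $B_0 = B_{\text{imp}}(\delta(0)/\Delta t)$ and moving the history to the right-hand side, invoking Lemma~\ref{lemma:coercivity_calderon_ld} for ellipticity of $B_0$ together with the monotonicity of $g$, and concluding with Browder--Minty (Proposition~\ref{prop:browder_minty}). The only cosmetic difference is that the paper changes the unknown to $\widetilde{\psi}^n := \psi^n + \bdryinterpY \dot{u}^{inc}(t_n)$ so that Assumption~\ref{assumption:nonlinearity_g}(\ref{assumption:nonlinearity_gss}) applies directly, whereas you keep $\psi^n$ and split off $\dualproduct{g(c^n)}{\psi}$ as a bounded linear perturbation; both are valid.
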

\begin{proof}
  We prove this by induction on $n$. For $n=0$ we are given the initial condition $\varphi^0=\psi^0=0$. Assume we have solved
  \eqref{eq:fully_discrete_int_eq} up to the $n-1$-st step.
  We denote the operators from the definition of $B_{\text{imp}}(\dd)$ as $B_j$, $j\in \N_0$, 
  dropping the subscript.% $\text{imp}$. 
  We set $\widetilde{\psi}^n:=\psi^n+\bdryinterpY \dot{u}^{inc}(t_n)$
  and bring all known terms to the right-hand side.
  Then, in the $n$-th step the equation reads
  \begin{align}
    \label{eq:int_eq_time_step}
    \dualproduct{\left[B_0
    \colvec{\varphi^n \\ \widetilde{\psi}^n}\right]^n}{\colvec{\xi \\ \eta}}
    + \dualproduct{g(\widetilde{\psi}^n) }{\eta} &= \dualproduct{f^n}{\colvec{\xi \\ \eta}},
  \end{align}
  with $\displaystyle f^n:=\colvec{0 \\ -\partial_n^+ u^{inc}(t_n)} - \sum_{j=0}^{n-1}{ B_{n-j} \colvec{\varphi^{j} \\\psi^j}}
  + B_0  \colvec{0 \\ \bdryinterpY\dot{u}^{inc}(t_n)}$.
  The right-hand side is a continuous linear functional 
  with respect to $(\xi,\eta)$ due to the mapping properties of the operators $B_j$ that are   easily transfered from  the frequency-domain versions~\eqref{eq:mapping_properties}; see \cite{lubich_94}.
  
  In order to apply Proposition~\ref{prop:browder_minty}, 
  we note that the operator $B_0: H^{-1/2}(\Gamma)\times H^{1/2}(\Gamma) \to H^{1/2}(\Gamma) \times H^{-1/2}(\Gamma)$
  is the leading term of a power series, and therefore $B_0=B_{\text{imp}}\left(\frac{\delta(0)}{\Delta t}\right)$.
  This implies $B_0$ is elliptic via Lemma \ref{lemma:coercivity_calderon_ld}. The nonlinearity satisfies: 
  $\dualproduct{g(\eta)}{\eta} = \int_{\Gamma}{g(\eta) \eta} \geq 0 $
  by Assumption~\ref{assumption:nonlinearity_g}~(\ref{assumption:nonlinearity_gss}). This implies that 
  the left-hand side in~\eqref{eq:int_eq_time_step} is coercive.
  For $\eta_1,\eta_2$,
  we apply the mean value theorem, to get:
  \begin{align*}
    \dualproduct{g(\eta_1)-g(\eta_2)}{\eta_1 - \eta_2} = \int_{\Gamma}{g'(s(x))(\eta_1(x) - \eta_2(x))^2 \;dx}
    &\geq 0,
  \end{align*}
  since $g' \geq 0$ via Assumption~\ref{assumption:nonlinearity_g}(\ref{assumption:nonlinearity_g_prime}).
  Thus the left-hand side in~\eqref{eq:int_eq_time_step} is also strongly monotone.
  We have already seen boundedness in Lemma~\ref{lemma:g_is_bounded}.
  The continuity is a consequence of Sobolev's embedding theorem, with the detailed proof  given later
  in more generality as part of 
  Lemma \ref{lemma:approx_nonlinearity}~(\ref{it:approx_nonlinearity_convergence_weak2}).
\end{proof}

\section{Convergence analysis} 
In this section we are interested in the convergence of the method towards the exact solution.
A straight forward approach would be to use the positivity of $B_{\text{imp}}$ and monotonicity of $g$ to bound the error in terms of a residual. 
Unfortunately, this approach necessitates strong assumptions on the regularity of the exact solution and seems only to give estimates in a rather weak norm;
see Appendix~\ref{appendix:herglotz} for a sketch of this methodology.
Instead of using the integral equation, we will show convergence by analysing an equivalent problem based on the approximation of the differential equation \eqref{eq:wave_eqn}.
This equivalence is spelled out in Lemma~\ref{lemma:diff_eq_u_ie}. We will then spend the rest of the section analysing the discretization errors between this formulation and the
exact solution.% (originating from discretization in time, space, and some additional terms due to the separation ansatz $u^{\text{tot}}=u^{\text{inc}} + u^{\text{scat}}$ instead of directly approximating $u^{\text{tot}}$). 
The construction of the equivalent system is based on the idea of exotic transmission problems as introduced in \cite{laliena_sayas}.

For a space $X \subseteq Y$ let the annihilator $X^{\circ} \subseteq Y'$ be defined as
\begin{align*}
  X^{\circ}:=\left\{ f \in Y': f(x) = 0 \; \forall x \in X \right\}.
\end{align*}

\begin{lemma}
\label{lemma:diff_eq_u_ie}
  Let $\Delta t > 0$, and let $X_h \subseteq H^{-1/2}(\Gamma)$, $Y_h \subseteq H^{1/2}(\Gamma)$
  be closed subspaces.
  Let
  \begin{align}
    \label{eq:def_Hh}
    \HH&:=\{ u \in \Hpglobal{1}: \tracejump{u} \in Y_h, \gamma^- u \in X_h^\circ \}.
  \end{align}

  Consider the sequence of problems: Find $\uie^n, \vie^n \in \HH$  for $n  = 0,1,\dots$ such that
\begin{subequations}
  \label{eq:diff_eq_u_ie}
  \begin{align}
    \frac{1}{\Delta t}\sum_{j=0}^k{\alpha_j \uie^{n-j}}&=\vie^{n}  \\
    \frac{1}{\Delta t}\sum_{j=0}^k{\alpha_j \vie^{n-j}}&=\laplace \uie^{n}  \\    
    \partial^+_n \uie^n - g\left(\tracejump{\vie^{n}} + \bdryinterpY\dot{u}^{inc}(t_n)\right)  
                                                       &+\partial_n^+  u^{inc}(t_n) \in X_h^\circ, \\
    \normaljump{\uie} &\in X_h,
  \end{align}
  where $t_n:=n\Delta t$ and $\uie^{-j}=\vie^{-j}:=0$ for $j\in \N$.
\end{subequations}
  Then the following two statements hold:
  \begin{enumerate}[(i)]
    \item
      \label{it:diff_eq_u_ie_1}
      If the sequences $\varphi^n$, $\psi^n$ solve (\ref{eq:fully_discrete_int_eq}), 
      then $\uie:=S(\dd)\varphi + \left(\dd \right)^{-1}D(\dd ) \psi$ and 
      $\vie:= \dd \uie$ solve (\ref{eq:diff_eq_u_ie}).
    \item
      \label{it:diff_eq_u_ie_2}
      If $\uie$,$\vie$ solve (\ref{eq:diff_eq_u_ie}), then traces
      $\varphi:=-\normaljump{ \uie}$, $\psi:=\tracejump{\vie}$
      solve \eqref{eq:fully_discrete_int_eq}.
    \end{enumerate}
Note: the subindex ``$ie$'', which stands for ``integral equations'', is used to separate this sequence from the one
    obtained by applying the multistep method to the semigroup, as defined in \eqref{eq:def_usg_vsg}.
\end{lemma}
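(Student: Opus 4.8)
The plan is to treat the equivalence as a purely algebraic consequence of the operational calculus, reducing each of the four discrete-time relations in~\eqref{eq:diff_eq_u_ie} to a Laplace-domain identity at the frequency $s=\delta(z)/\Delta t$, which lies in $\C^+$ for $\abs{z}<1$ by the $A$-stability of BDF1/BDF2. First I would fix the dictionary. The weight $K(s)=s$ produces the discrete derivative $[\dd u]^n=\frac{1}{\Delta t}\sum_{j=0}^k\alpha_j u^{n-j}$, so the first line of~\eqref{eq:diff_eq_u_ie} is nothing but $\vie=\dd\uie$; the composition rule from Definition~\ref{def:operational_calculus} together with the homogeneous initial data gives $(\dd)^{-1}\dd=\mathrm{id}$; and since the potentials satisfy $\laplace S(s)=s^2 S(s)$ and $\laplace D(s)=s^2 D(s)$ away from $\Gamma$, the spatial Laplacian commutes through the operational calculus to yield $\laplace\bigl[S(\dd)\varphi\bigr]=(\dd)^2 S(\dd)\varphi$ and likewise for $D$.

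For direction~(\ref{it:diff_eq_u_ie_1}) I would set $\uie:=S(\dd)\varphi+(\dd)^{-1}D(\dd)\psi$ and $\vie:=\dd\uie=\dd S(\dd)\varphi+D(\dd)\psi$, and verify the four relations by taking traces. Using the standard jump relations $\tracejump{S(s)\varphi}=0$, $\normaljump{S(s)\varphi}=-\varphi$, $\tracejump{D(s)\psi}=\psi$, $\normaljump{D(s)\psi}=0$ one gets $\tracejump{\uie}=(\dd)^{-1}\psi\in Y_h$, $\normaljump{\uie}=-\varphi\in X_h$ (the fourth relation), and $\tracejump{\vie}=\psi$. The first relation holds by construction and the second, $\dd\vie=\laplace\uie$, is exactly the dictionary identity $(\dd)^2\uie=\laplace\uie$. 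The crucial observation is that the $\pm\tfrac12 I$ shifts in $B_{\text{imp}}$ are tuned so that the two components of $B_{\text{imp}}(\dd)\colvec{\varphi\\\psi}$ are precisely the one-sided traces $\gamma^-\vie$ and $-\partial_n^+\uie$: from $\gamma^- S(s)=V(s)$ and $\gamma^- D(s)=K(s)-\tfrac12 I$ one gets $\gamma^-\vie=\dd V(\dd)\varphi+(K(\dd)-\tfrac12 I)\psi$, while $\partial_n^+ S(s)=K^t(s)-\tfrac12 I$ and $\partial_n^+ D(s)=-W(s)$ give $-\partial_n^+\uie=(-K^t(\dd)+\tfrac12 I)\varphi+(\dd)^{-1}W(\dd)\psi$. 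Reading the two scalar equations in~\eqref{eq:fully_discrete_int_eq} as membership statements then yields $\gamma^-\vie^n\in X_h^\circ$ and the boundary relation; since $\gamma^-\uie=(\dd)^{-1}\gamma^-\vie$ and the annihilators are closed under the discrete convolutions, this supplies exactly the memberships defining $\HH$ and the third relation of~\eqref{eq:diff_eq_u_ie}.

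For the converse~(\ref{it:diff_eq_u_ie_2}) I would run the same computation backwards. Applying the $Z$-transform to the first two relations shows that $\widehat{\uie}(z)$ solves $(\laplace-s^2)\widehat{\uie}=0$ with $s=\delta(z)/\Delta t\in\C^+$, so the representation formula from Definition~\ref{def:integral_operators} applies frequency by frequency; combined with $\normaljump{\uie}=-\varphi$ and $\tracejump{\uie}=(\dd)^{-1}\tracejump{\vie}=(\dd)^{-1}\psi$ this recovers $\uie=S(\dd)\varphi+(\dd)^{-1}D(\dd)\psi$, so $\varphi=-\normaljump{\uie}\in X_h$ and $\psi=\tracejump{\vie}\in Y_h$ are admissible densities. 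The interior-trace membership $\gamma^-\uie\in X_h^\circ$ built into $\HH$ then reads, after multiplication by $\dd$, as the first line of~\eqref{eq:fully_discrete_int_eq}, and the third relation of~\eqref{eq:diff_eq_u_ie} (with $\psi^n=\tracejump{\vie^n}$) reads as its second line, using once more the identifications of the components of $B_{\text{imp}}(\dd)$ with $\gamma^-\vie$ and $-\partial_n^+\uie$.

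The nonlinearity is not the difficulty: $g$ enters both formulations identically, pointwise in $n$ and applied to the same argument $\tracejump{\vie^n}=\psi^n$, so the equivalence is essentially linear once the potentials are in place. The main obstacle is the bookkeeping that makes the operational calculus rigorous: justifying that the convolution-quadrature weight series and their inverse transforms are well defined on the relevant Sobolev spaces, that the spatial trace and jump operators commute with the time-discrete operators $S(\dd)$, $D(\dd)$, $(\dd)^{-1}$, and — most delicately — that the cancellation $(\dd)^{-1}\dd=\mathrm{id}$ and the passage between $\uie$ and $\vie=\dd\uie$ are valid, which hinges on the homogeneous initial data $\uie^{-j}=\vie^{-j}=0$. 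Making the signs and the correct annihilators line up is the part requiring the most care: the interior trace $\gamma^-\vie\in H^{1/2}(\Gamma)$ is matched to $X_h^\circ\subseteq H^{1/2}(\Gamma)$ via the first row of $B_{\text{imp}}$, whereas the flux $\partial_n^+\uie\in H^{-1/2}(\Gamma)$ must be matched to $Y_h^\circ\subseteq H^{-1/2}(\Gamma)$ via the second row, and this typing is precisely what dictates the placement of the $\pm\tfrac12 I$ terms.
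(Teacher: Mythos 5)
Your proposal is correct in substance and rests on the same core mechanism as the paper --- the operational calculus, the jump relations of the CQ potentials, and the identification of the two rows of $B_{\text{imp}}(\dd)\binom{\varphi}{\psi}$ with $\gamma^-\vie$ and the exterior flux of $\uie$ via the representation formula in the $Z$-domain --- but it is organised differently in a way worth noting. For direction (\ref{it:diff_eq_u_ie_1}) you verify the potential ansatz directly from the jump relations, which is cleaner than what the paper does: the paper first proves, by induction and a Browder--Minty argument on a reduced weak formulation in $\HH$, that the exotic transmission problem \eqref{eq:diff_eq_u_ie} possesses a solution (including a separate $Z$-transform/Cauchy-integral argument to show $\normaljump{\uie^n}\in X_h$), and only then obtains (\ref{it:diff_eq_u_ie_1}) by invoking uniqueness of Helmholtz transmission problems to identify that solution with the potential representation. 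Your route makes the existence of solutions to \eqref{eq:diff_eq_u_ie} a corollary of (\ref{it:diff_eq_u_ie_1}) together with Theorem~\ref{thm:existence_discrete}, rather than an independent input; what you lose is the self-contained solvability argument for \eqref{eq:diff_eq_u_ie}, which the paper reuses almost verbatim when verifying the range condition in Theorem~\ref{thm:semigroup}, so it is not wasted effort there. For direction (\ref{it:diff_eq_u_ie_2}) your argument ($Z$-transform, Helmholtz equation at $s=\delta(z)/\Delta t$, representation formula, reading off the two rows tested against $X_h\times Y_h$) coincides with the paper's. Two pieces of bookkeeping that you flag but do not carry out are done explicitly in the paper and should not be skipped in a full write-up: the a priori bound $\norm{\uie^n}_{\HpLglobal{1}}\leq C(\Delta t)\sum_{j<n}\norm{\uie^j}_{\HpLglobal{1}}$ guaranteeing that the $Z$-transforms converge for $\abs{z}$ small, and the closedness of $X_h$, $Y_h$ and their annihilators needed to pull memberships back through the Cauchy integral for the weights. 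Finally, your remark on the typing of the flux condition is well taken: as printed, the third relation of \eqref{eq:diff_eq_u_ie} asserts membership in $X_h^\circ$, but the quantity lives in $H^{-1/2}(\Gamma)$ and the proof in the paper itself works with $Y_h^\circ$, consistent with your reading.
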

\begin{proof}
  We first note that (\ref{eq:diff_eq_u_ie}) has a solution in $\HH$. 
  
  We show this by induction on $n$. For $n\leq0$ we set $\uie^n:=\vie^n:=0$.
  For $n \in \N$, we consider the weak formulation, find $\uie^n \in \HH,$ $\vie^n \in \HH$, such that
  \begin{subequations}
    \label{eq:weak_form_proof_eq_u_ie}
    \begin{align}
      \frac{1}{\Delta t}\sum_{j=0}^k{\alpha_j \uie^{n-j}}
      &=\vie^{n}, \\
      \ltwoproduct{\frac{1}{\Delta t}\sum_{j=0}^k{\alpha_j \vie^{n-j}}}{z_h}
      &=\begin{aligned}[t]
        &-\ltwoproduct{\nabla \uie^{n} }{\nabla z_h}%,  \\
%        {}&
- \dualproduct{
              g\left(\tracejump{\vie^{n}} + \bdryinterpY \dot{u}^{inc}(t_{n})\right)
              - \partial_n^+  u^{inc}(t_{n})}{\tracejump{z_h}},
        \end{aligned}
    \end{align}
  \end{subequations}
  for all $z_h \in \HH$.
  Multiplying the first equation by $\Delta t$ and collecting all the terms involving $\uie^j$ and $\vie^j$ for $j<n$ in  $F_n \in \HH$, 
  the condition becomes $\alpha_0 \uie^{n} = \Delta t \,\vie^{n} + F_n$. After inserting this identity and combining all 
  known terms into a new right-hand side $\widetilde{F}^n \in \HH'$, the second equation becomes
  \begin{align*}
    % \frac{1}{\Delta t}\ltwoproduct{ \vie^n }{z_h} + \ltwoproduct{\beta_0 \nabla \vie^n}{\nabla z_h} + 
    % \dualproduct{g\left(\tracejump{\vie^{n-j}} + \bdryinterpY \dot{u}^{inc}(t_{n-j})\right)}{\tracejump{z_h}} &= \dualproduct[\HH' \times \HH]{\widetilde{F}^n}{z_h}.
    \frac{\alpha_0}{\Delta t}\ltwoproduct{ \vie^n }{z_h} + \frac{\Delta t}{\alpha_0}\ltwoproduct{ \nabla \vie^n}{\nabla z_h} + 
    \dualproduct{g\left(\tracejump{\vie^{n}} + \bdryinterpY \dot{u}^{inc}(t_{n})\right)}{\tracejump{z_h}} &= \dualproduct[\HH' \times \HH]{\widetilde{F}^n}{z_h}.
  \end{align*}

  Since $\HH$ is a closed subspace of $H^1$, this equation can be solved for all $n \in \N$ due to the 
  monotonicity of the operators involved and the Browder-Minty theorem; see Proposition \ref{prop:browder_minty} and also the proof of Theorem \ref{thm:existence_discrete} for how to treat the nonlinearity.
  With $\alpha_0 \uie^{n} := \Delta t \, \vie^{n} + F_n$, we have found a solution to \eqref{eq:weak_form_proof_eq_u_ie}.

  What still needs to be shown is that $\normaljump{\uie^n} \in X_h$.  
  Note that it is sufficient to show $\normaljump{\widehat{\uie}} \in X_h$ for the
  Z-transformed variable, as we can then express $\normaljump{\uie^n}$ as a Cauchy integral in $X_h$.
  The details of this argument are given later.

  It is easy to see that 
  $\norm{\uie^n}_{\HpLglobal{1}} \leq C(\Delta t) \sum_{j=0}^{n-1} {\norm{\uie^{j}}_{\HpLglobal{1}}}$, 
  where the constant may depend on $\Delta t$, but not on $\uie^{j}$, $\vie^{j}$ or $n$.
  This implies that the $Z$-transform $\widehat{\uie}(z)$ is well defined for $\abs{z}$ sufficiently
  small.

  To simplify notation, define
  $G_n:=g(\tracejump{\vie^{n}} + \bdryinterpY \dot{u}^{inc}(t_n)) - \partial_n  u^{inc}(t_n)$.  
  Taking the Z-transform of $(\uie)$ and $(\vie)$ a simple calculations shows that for $z_h \in \HH$
  \begin{align*}
    \ltwoproduct{\left(\frac{\delta(z)}{\Delta t}\right)^2 \widehat{\uie}}{z_h}
    + \ltwoproduct{\nabla \widehat{\uie}}{\nabla z_h} + \dualproduct{\widehat{G}}{\tracejump{z_h}} &=0.
  \end{align*}
  For $z_h \in C_0^{\infty}\left(\R^d \setminus \Gamma\right)$ this implies 
  \begin{align*}
    -\laplace \widehat{\uie} + \left(\frac{\delta(z)}{\Delta t}\right)^2 \widehat{\uie} &= 0.
  \end{align*}
  From $z_h \in \HH$ with $z_h|_{\Omega^-}=0$ we see $\partial_n^+ \widehat{\uie} -\widehat{G} \in Y_h^{\circ}$.
  Let $\xi \in X_h^\circ$ and $z_h$ is a lifting of $\xi$ to $\Hpglobal{1}$, 
  i.e., $\gamma^+z_h=\gamma^-z_h=\xi$, then
  we get by integration by parts:
  \begin{align*}
    \dualproduct{-\partial^+_n \widehat{\uie}}{\xi} + \dualproduct{\partial^-_n \widehat{\uie}}{\xi} &= 0,
  \end{align*}
  or $\normaljump{\widehat{\uie}} \in \left(X_h^{\circ}\right)^{\circ}=X_h$.
  We can use the Cauchy-integral formula to write:
  \begin{align*}
    \normaljump{\uie^n}&=\frac{1}{2\pi \ii} \int_{\mathcal{C}}{ \normaljump{\widehat{\uie}} z^{-n-1} \,dz},
  \end{align*}
  where the contour $\mathcal{C}:=\{z \in \C: \abs{z}= \text{const} \}$ denotes a sufficiently small circle,
  such that all the $Z$-transforms exist.  Since we have shown that $\normaljump{\widehat{\uie}} \in X_h$ and we assumed that $X_h$ is a closed space,
  this implies $\normaljump{u^n} \in X_h$. Thus we have shown the existence of a solution to (\ref{eq:diff_eq_u_ie}).

  We can now show the equivalence of (\ref{it:diff_eq_u_ie_1}) and (\ref{it:diff_eq_u_ie_2}).
  We start by showing that the traces of the solutions to (\ref{eq:diff_eq_u_ie}) solve the
  boundary integral equation.
  We have the following equation in the frequency domain:
  \begin{align*}
    -\laplace \widehat{\uie} + \left(\frac{\delta(z)}{\Delta t}\right)^2 \widehat{\uie} &= 0 \\
    \partial^+_n \widehat{\uie} - \widehat G \in Y_h^{\circ}.
  \end{align*}
  The representation formula tells us that we can write
  $\widehat{\uie}(z)=-S(z)\normaljump{\widehat{\uie}(z)} + D(z) \tracejump{\widehat{\uie}(z)}$.
  We set $s^{\Delta t}:=\frac{\delta(z)}{\Delta t} $,
  $\widetilde{\psi}(z):=s^{\Delta t} \tracejump{\widehat{\uie}}=\tracejump{\widehat{\vie}}$ 
  and $\widetilde{\varphi}(z):=-\normaljump{\widehat{\uie}}$.
 Multiplying the representation formula by  $s^{\Delta t}$ gives
  $s^{\Delta t} \widehat{\uie}(z)= s^{\Delta t} S(z) \widetilde{\varphi} + D(z) \widetilde{\psi}$.   
  Taking the interior trace $\gamma^-$ and testing with a discrete function $\xi_h \in X_h$ gives   
  \begin{align*}    
    0&=\dualproduct{s^{\Delta t}\gamma^- \widehat{\uie}}{\xi_h}
      =\dualproduct{s^{\Delta t} V(s^{\Delta t}) \widetilde{\varphi}}{\xi_h}
       +\dualproduct{\left( K(s^{\Delta t}) - 1/2 \right)\widetilde{\psi} }{\xi_h}.
  \end{align*}
  Analogously,  by starting from the original representation formula,
  taking the exterior normal derivative $\partial_n^+$,  and testing with $\eta_h \in Y_h$ we obtain that
  \begin{align*}
    \dualproduct{G }{\eta_h}
    &=\dualproduct{\partial_n^+ \widehat{\uie} }{\eta_h}
      =\dualproduct{\left( 1/2 - K^t(s^{\Delta t}) \right) \widetilde{\varphi}}{\eta_h}
       +\dualproduct{ \left(s^{\Delta t}\right)^{-1} W(s^{\Delta t}) \widetilde{\psi}}{\eta_h}.
  \end{align*}
  Together, this is just the Z-transform of (\ref{eq:fully_discrete_int_eq}).
  By taking the inverse Z-transform, we conclude that the traces $\tracejump{\uie}$ and $\normaljump{\vie}$
    solve (\ref{eq:fully_discrete_int_eq}). By the uniqueness of the solution via Theorem~\ref{thm:existence_discrete},
    this implies $\varphi^n=-\normaljump{\uie^n}$ and $\psi^n=\tracejump{\vie^n}$, which then shows (\ref{it:diff_eq_u_ie_2}). 

  For (\ref{it:diff_eq_u_ie_1}), we observe that due to the
  uniqueness of solutions to Helmholtz transmission problems
  $\uie$ defined via (\ref{eq:diff_eq_u_ie}) and $\uie$ defined via potentials 
  have the same Z-transform and therefore coincide also in the time domain.
\end{proof}

\subsection{The continuous problem and semidiscretization in space}
\label{sect:sd_in_space}
In this section we investigate the problem in a time-continuous setting.
We consider the case of discretization in space via a Galerkin method, inspired by the spaces appearing in Lemma~\ref{lemma:diff_eq_u_ie},
and also show existence and uniqueness of (\ref{eq:wave_eqn}) under the assumptions on $g$ made in Assumption \ref{assumption:nonlinearity_g}.
The continuous problem is treated as a special case of the space-semidiscrete problem, as it allows a tighter presentation of arguments, instead
of having to prove things twice. We also lay the foundation for the later treatement of the discretization in time by 
 introducing the right functional analytic setting in the language of nonlinear semigroups.

\subsubsection{Semigroups}
\label{subsect:semigroups}
We would like to use the large toolkit provided by the theory of (nonlinear) semigroups, 
a summary of which can, for example, be found in \cite{showalter_book}. 
In order to do so, we will rewrite the wave equation~\eqref{eq:wave_eqn} as a first order system. We introduce the new variable $v$ by setting $v:=\dot{u}$ to get
\begin{align*}
  \colvec{\dot{u}\\\dot{v}}&= \colvec{ v \\ \laplace u}, \\
  \partial_n^+ u &= g(\gamma^+ v + \dot{u}^{inc}) - \partial_n^+ u^{inc}.
\end{align*}

The following definition is at the centre of the
used theory.
\begin{definition}
  Let $H$ be a Hilbert space and $\AA: H \to H$ be a (not necessarily linear  or continuous)
  operator with domain $\domain(\AA)$. We call $\AA$ maximally monotone
  if it satisfies:
  \begin{enumerate}[(i)]
  \item $\displaystyle \left(\AA x -\AA y,x-y\right)_{H} 
    \leq 0$ \quad $\forall x,y \in \domain{\AA}$,
  \item $\operatorname{range}\left(I-\AA\right)=H$.
  \end{enumerate}
\end{definition}
\begin{remark}
  We follow the notation used in \cite{graber}. Other authors, e.g. \cite{nevanlinna}
  work with $-\AA$ instead. \eremk
\end{remark}

The following proposition summarizes the main existence result from the
theory of nonlinear semigroups (we focus on the case $u_0 \in \domain(\AA)$).
\begin{proposition}[K\={o}mura-Kato, {\cite[Proposition 3.1]{showalter_book}}]
\label{prop:komura_kato}
  Let $\AA$ be a maximally monotone operator on a Hilbert
  space $H$ with domain $\domain(\AA) \subseteq H$.
  
  For each $u_0 \in \domain(\AA)$ there exists a unique absolutely continuous function $u: [0,\infty) \to H$,
  such that:
  \begin{align*}
    \dot{u} &= \AA u \quad \text{and } \quad u(0)=u_0
  \end{align*}
  almost everywhere in $t$.
Further,  $u$ is Lipschitz with $\norm{\dot{u}}_{L^{\infty}\left((0,\infty);H\right)} \leq \norm{\AA u_0}_{H}$ and $u(t) \in \domain(\AA)$ for all $t \geq 0$.
\end{proposition}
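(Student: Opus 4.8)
The plan is to follow the classical Yosida-regularization proof of the nonlinear Hille--Yosida (K\={o}mura--Kato) generation theorem; in the Hilbert-space setting this is cleaner than the Crandall--Liggett exponential formula. Since $\AA$ is maximally monotone in the dissipative sense used here, the first task is to construct the resolvent. For every $\lambda>0$ the operator $I-\lambda\AA$ is bijective onto $H$: surjectivity follows from the range condition $\operatorname{range}(I-\AA)=H$ (the extension from $\lambda=1$ to all $\lambda>0$ being a standard continuation/fixed-point argument that uses dissipativity), while injectivity and the nonexpansiveness of the inverse follow directly from $\left(\AA x-\AA y,x-y\right)_H\le 0$. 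Hence $J_\lambda:=(I-\lambda\AA)^{-1}$ is a well-defined, single-valued, $1$-Lipschitz map on all of $H$, and the Yosida approximation $\AA_\lambda:=\lambda^{-1}\left(J_\lambda-I\right)$ is globally Lipschitz (constant at most $2/\lambda$), dissipative, and satisfies $\AA_\lambda x=\AA(J_\lambda x)$ for $x\in\domain(\AA)$ together with the a priori bound $\norm{\AA_\lambda x}_H\le\norm{\AA x}_H$.

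First I would solve the regularized problem $\dot u_\lambda=\AA_\lambda u_\lambda$ with $u_\lambda(0)=u_0$. As $\AA_\lambda$ is globally Lipschitz, the Picard--Lindel\"of theorem (Banach fixed point applied to the associated Volterra integral equation) yields a unique global $C^1$ solution $u_\lambda$. Differentiating the equation and using the dissipativity of $\AA_\lambda$ gives $\frac{d}{dt}\norm{\dot u_\lambda}_H^2\le 0$, so that $\norm{\dot u_\lambda(t)}_H\le\norm{\AA_\lambda u_0}_H\le\norm{\AA u_0}_H$ uniformly in $t$ and $\lambda$. This is the estimate that will survive the limit and produce the asserted Lipschitz bound.

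The heart of the argument, and the step I expect to be the main obstacle, is proving that $(u_\lambda)_\lambda$ is Cauchy as $\lambda\to 0$. I would compute $\tfrac12\frac{d}{dt}\norm{u_\lambda-u_\mu}_H^2=\left(\AA_\lambda u_\lambda-\AA_\mu u_\mu,u_\lambda-u_\mu\right)_H$ and exploit the identity $u_\lambda=J_\lambda u_\lambda-\lambda\AA_\lambda u_\lambda$. Writing $u_\lambda-u_\mu=(J_\lambda u_\lambda-J_\mu u_\mu)-(\lambda\AA_\lambda u_\lambda-\mu\AA_\mu u_\mu)$ and using $\AA_\lambda u_\lambda=\AA(J_\lambda u_\lambda)$ with the dissipativity of $\AA$ annihilates the $(J_\lambda u_\lambda-J_\mu u_\mu)$ contribution, so that after a Cauchy--Schwarz estimate on the remaining term one is left with a bound by $(\lambda+\mu)\norm{\AA u_0}_H^2$. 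Integrating from the common initial datum gives $\norm{u_\lambda(t)-u_\mu(t)}_H^2\lesssim(\lambda+\mu)\,t\,\norm{\AA u_0}_H^2$, hence uniform convergence on compact time intervals to a limit $u$.

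It then remains to identify $u$ as a genuine solution, which is the delicate passage to the limit in the nonlinear term. Since $\norm{\AA_\lambda u_\lambda(t)}_H$ is bounded and $J_\lambda u_\lambda=u_\lambda-\lambda\AA_\lambda u_\lambda\to u$, for a.e.\ $t$ one extracts a weak limit of $\AA_\lambda u_\lambda(t)$ and invokes the demiclosedness of the maximal monotone graph (equivalently, maximality) to conclude $u(t)\in\domain(\AA)$ with $\dot u(t)=\AA u(t)$ almost everywhere. Uniqueness is then immediate: for two solutions $u,w$, dissipativity yields $\frac{d}{dt}\norm{u-w}_H^2=2\left(\AA u-\AA w,u-w\right)_H\le 0$, so $\norm{u(t)-w(t)}_H$ is nonincreasing and vanishes when $u_0=w_0$. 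Finally, the uniform bound $\norm{\dot u_\lambda}_{L^\infty}\le\norm{\AA u_0}_H$ passes to the weak-$*$ limit, giving $\norm{\dot u}_{L^\infty\left((0,\infty);H\right)}\le\norm{\AA u_0}_H$ and completing the proof.
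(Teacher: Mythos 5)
The paper offers no proof of this proposition at all: it is imported verbatim as a classical result, with the citation to Showalter's book standing in for the argument. Your Yosida-regularization proof is the standard K\={o}mura--Brezis argument for exactly this theorem (and is essentially what the cited reference does), and the outline is correct: resolvent construction from the range condition plus dissipativity, global solvability of the Lipschitz-regularized problem, the uniform bound $\norm{\dot u_\lambda}\le\norm{\AA u_0}$, the Cauchy estimate in $\lambda$ via splitting off the $J_\lambda$-part, and demiclosedness of the maximal monotone graph to identify the limit. Two small technical remarks. First, $\AA_\lambda$ is merely Lipschitz, so $\frac{d}{dt}\norm{\dot u_\lambda}^2\le 0$ cannot be obtained by differentiating the equation; the rigorous route is to note that $t\mapsto\norm{u_\lambda(t+h)-u_\lambda(t)}$ is nonincreasing (two solutions of the same dissipative ODE) and divide by $h$. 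Second, the identity $\AA_\lambda x=\AA(J_\lambda x)$ in fact holds for \emph{every} $x\in H$, not only for $x\in\domain(\AA)$ as you state it; this matters because in the Cauchy estimate you apply it at $x=u_\lambda(t)$, which need not lie in $\domain(\AA)$. Neither point is a gap in substance, only in the level of detail.
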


Since we would like to use monotone operator techniques, we need an appropriate
functional analytic setting. We introduce the Beppo-Levi space \cite{DenL}
\begin{align}
\label{eq:def_BL1}
  BL^1&:=\left\{u \in H^1_{\text{loc}}(\R^d \setminus \Gamma): \ltwonorm{\nabla u}< \infty \right\}/
        \operatorname{ker}{\nabla} ,
\end{align}
equipped with the norm $\norm{u}_{BL^1}:=\norm{\nabla u}_{L^2(\R^d \setminus \Gamma)}$ and the corresponding inner product.
This space contains all functions that are $H^1$ on compact subsets up to, not necessarily the same, constants
in the exterior and interior domain with $L^2$-gradient in $\Omega^+$ and $\Omega^-$.

The functional analytic setting for our problem is laid out in the next theorem.
  We formulate the problem so  that it covers the continuous in time/discrete in space case. To obtain the continous problem, we just  set $X_h:=H^{-1/2}(\Gamma)$
  and $Y_h:=H^{1/2}(\Gamma)$. 
\begin{theorem}
  \label{thm:semigroup}
  Consider the space 
  $\mathcal{X}:=BL^1 \times L^2(\R^d)$ with the product norm and corresponding inner product and
the block operator
  \begin{align}
    \AA:=
    \begin{pmatrix}
      0 & I \\
      \laplace   & 0
    \end{pmatrix},
  \end{align}
  \begin{align}
    \label{eq:def_domain_A}
    \domain(\AA):=\bigg\{& (u,v) \in  BL^1 \times L^2(\R^d):
    \laplace u \in L^2(\R^d \setminus \Gamma), v \in \HH, 
      \normaljump{u} \in X_h, 
      \partial^+_n u - g(\tracejump{v})  \in Y_h^\circ
    \bigg\}.
  \end{align}
  Then $\AA$ is a maximally monotone operator on $\mathcal{X}$
  and generates a strongly continuous semigroup that solves 
  \begin{align}
    \label{eq:ode_semigroup}
    \colvec{\dot{u} \\ \dot{v}}  = \AA\colvec{u \\ v} , \quad
    u(0)=u_0,\;
    v(0)=v_0,
  \end{align}
for all initial data $\left(u_0,v_0\right) \in \domain(\AA)$.

If additionally  to $v_0 \in \HH$,  also $u_0 \in \HH$, then the solution satisfies
  \begin{enumerate}[(i)]
  \item
    \label{it:sg_solution_prop_1}
    $\left(u(t),v(t)\right)\in \domain(\AA)$,  as well as $u(t) \in \HH$ and $v(t) \in \HH$  for all $t > 0$.
  \item
    \label{it:sg_solution_prop_2}
    $u \in C^{1,1}\left([0,\infty), H^{1}(\R^d\setminus \Gamma) \right)$,
  \item
    \label{it:sg_solution_prop_3} 
    $\dot{u} \in L^{\infty}\left((0,\infty), \Hpglobal{1} \right)$,
  \item
   \label{it:sg_solution_prop_4}
    $\ddot{u} \in L^{\infty}\left((0,\infty), L^2(\R^d) \right)$.
  \end{enumerate}
Since a-priori $u$ is only fixed up to constants,  (\ref{it:sg_solution_prop_1})-(\ref{it:sg_solution_prop_4}) are meant in the sense that there exists a representation which satisfies these properties.   From now on, we
  will not preoccupy ourselves with this distinction and always use 
  this representant.
\end{theorem}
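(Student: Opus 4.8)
The plan is to show that $\AA$ is maximally monotone in the sense of the definition above; the semigroup then comes from the K\={o}mura--Kato theorem (Proposition~\ref{prop:komura_kato}), and the regularity assertions (\ref{it:sg_solution_prop_1})--(\ref{it:sg_solution_prop_4}) will be read off from its quantitative bound $\norm{\dot U}_{L^{\infty}((0,\infty);\mathcal{X})} \le \norm{\AA U_0}_{\mathcal{X}}$ combined with the extra hypothesis $u_0 \in \HH$. I regard $\mathcal{X}$ as a real Hilbert space with $\left((u_1,v_1),(u_2,v_2)\right)_{\mathcal{X}} = \ltwoproduct{\nabla u_1}{\nabla u_2} + \ltwoproduct{v_1}{v_2}$, and write $U=(u,v)$. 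For monotonicity I take $U_i=(u_i,v_i)\in\domain(\AA)$, set $u:=u_1-u_2$, $v:=v_1-v_2$, and use that $\AA$ acts linearly together with Green's identity on $\Omega^{\pm}$ (the normal pointing out of $\Omega^-$):
\[
\left(\AA U_1 - \AA U_2,\, U_1 - U_2\right)_{\mathcal{X}}
= \ltwoproduct{\nabla v}{\nabla u} + \ltwoproduct{\laplace u}{v}
= \dualproduct{\partial_n^- u}{\gamma^- v} - \dualproduct{\partial_n^+ u}{\gamma^+ v}.
\]
Writing $\gamma^+ v=\gamma^- v+\tracejump{v}$ and inserting the domain conditions $\normaljump{u}\in X_h$, $\gamma^- v\in X_h^\circ$ (which annihilates the $\gamma^- v$ term) and $\partial_n^+ u_i - g(\tracejump{v_i})\in Y_h^\circ$ with $\tracejump{v}\in Y_h$, the right-hand side collapses to $-\dualproduct{g(\tracejump{v_1})-g(\tracejump{v_2})}{\tracejump{v_1}-\tracejump{v_2}}\le 0$, the sign coming from Assumption~\ref{assumption:nonlinearity_g}(\ref{assumption:nonlinearity_g_prime}). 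This is condition (i).

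The range condition $\operatorname{range}(I-\AA)=\mathcal{X}$ carries the real work. Given $(f_1,f_2)\in\mathcal{X}$, the equation $(I-\AA)U=(f_1,f_2)$ reads $u-v=f_1$, $v-\laplace u=f_2$; eliminating $u=v+f_1$ leaves $\laplace u = v-f_2\in L^2$, so I pose the weak problem on the closed subspace $\HH$: find $v\in\HH$ with
\[
\ltwoproduct{v}{z} + \ltwoproduct{\nabla v}{\nabla z} + \dualproduct{g(\tracejump{v})}{\tracejump{z}}
= \ltwoproduct{f_2}{z} - \ltwoproduct{\nabla f_1}{\nabla z}
\qquad \forall z \in \HH.
\]
(The $v$-formulation, rather than one in $u$, is essential since $u$ need only lie in $BL^1$ and may fail to be in $L^2$.) The induced operator $\HH\to\HH'$ is strongly monotone and coercive --- its linear part contributes $\norm{\cdot}_{\Hpglobal{1}}^2$ and $\dualproduct{g(\tracejump{v})}{\tracejump{v}}\ge 0$ by Assumption~\ref{assumption:nonlinearity_g}(\ref{assumption:nonlinearity_gss}) --- bounded by Lemma~\ref{lemma:g_is_bounded}, and continuous by the argument of Lemma~\ref{lemma:approx_nonlinearity}, so Browder--Minty (Proposition~\ref{prop:browder_minty}) yields a unique $v$. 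Setting $u:=v+f_1$ gives $u\in BL^1$ with $\laplace u\in L^2$; testing with $z\in C_0^\infty(\R^d\setminus\Gamma)$ recovers the distributional identity $\laplace u = v-f_2$, and integrating by parts in the weak form for general $z\in\HH$ reduces it to
\[
-\dualproduct{\normaljump{u}}{\gamma^- z} + \dualproduct{g(\tracejump{v}) - \partial_n^+ u}{\tracejump{z}} = 0
\qquad \forall z \in \HH.
\]
Choosing $z$ with $\tracejump{z}=0$ and $\gamma^- z$ exhausting $X_h^\circ$ gives $\normaljump{u}\in(X_h^\circ)^\circ=X_h$ (using that $X_h$ is closed and $H^{-1/2}(\Gamma)$ reflexive), while choosing $z$ supported in $\Omega^+$ with $\gamma^- z=0$ gives $\partial_n^+ u - g(\tracejump{v})\in Y_h^\circ$; hence $(u,v)\in\domain(\AA)$ solves the resolvent equation, exactly as in the proof of Lemma~\ref{lemma:diff_eq_u_ie}.

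With maximal monotonicity in hand, Proposition~\ref{prop:komura_kato} furnishes the strongly continuous semigroup solving \eqref{eq:ode_semigroup} with $U(t)\in\domain(\AA)$ for all $t\ge 0$ (the first half of (\ref{it:sg_solution_prop_1})) and $\norm{\dot U}_{L^{\infty}}\le\norm{\AA U_0}_{\mathcal{X}}$. For the remaining claims I use $\dot u = v$ and $\ddot u = \dot v = \laplace u$. Boundedness of $\norm{U}_{\mathcal{X}}$ controls $\ltwonorm{v}$ and $\ltwonorm{\nabla u}$, and boundedness of $\norm{\dot U}_{\mathcal{X}}$ controls $\ltwonorm{\nabla v}$ and $\ltwonorm{\laplace u}$; the first and third together give $\dot u=v\in L^{\infty}\left((0,\infty);\Hpglobal{1}\right)$, which is (\ref{it:sg_solution_prop_3}), and the fourth gives $\ddot u = \laplace u\in L^{\infty}\left((0,\infty);L^2(\R^d)\right)$, which is (\ref{it:sg_solution_prop_4}). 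For $u(t)\in\HH$ I write $u(t)=u_0+\int_0^t v(s)\,ds$; since $u_0\in\HH$ and $v(s)\in\HH$ with $v\in L^{\infty}(\Hpglobal{1})$, the Bochner integral remains in the closed subspace $\HH$, completing (\ref{it:sg_solution_prop_1}), and the same representation together with $v\in L^{\infty}(\Hpglobal{1})$ and $\dot v\in L^{\infty}(L^2(\R^d))$ yields the $C^{1,1}$-regularity (\ref{it:sg_solution_prop_2}).

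I expect the range condition to be the crux: one must select the correct variational setting (forced to be $v\in\HH$ by the $BL^1$/$L^2$ asymmetry of $\mathcal{X}$), verify the Browder--Minty hypotheses for the nonlinear term, and, most delicately, recover the hidden boundary conditions $\normaljump{u}\in X_h$ and $\partial_n^+ u - g(\tracejump{v})\in Y_h^\circ$ from the weak formulation through the annihilator/bipolar identity $(X_h^\circ)^\circ=X_h$. A secondary nuisance is the constant ambiguity in $BL^1$ and the check that the normal traces and Green's identity remain valid for the $u$-component, which satisfies only $\nabla u,\laplace u\in L^2$.
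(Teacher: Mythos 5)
Your proposal is correct and follows the same overall architecture as the paper's proof: monotonicity via Green's identity and the domain conditions, the range condition via a Browder--Minty weak formulation posed for $v\in\HH$, existence of the semigroup from K\={o}mura--Kato, and the regularity statements read off from the first-order system. The one place you genuinely deviate is the range condition: you solve the resolvent problem directly for arbitrary data $(f_1,f_2)\in BL^1\times L^2(\R^d)$, using only that $\nabla f_1\in L^2$ makes the right-hand side a bounded functional on $\HH$, whereas the paper first restricts to $f_1$ in the dense subspace $\HpLglobal{1}$ and then passes to general data by a limiting argument (the contraction property of the resolvents gives a Cauchy sequence, and Lemma~\ref{lemma:approx_nonlinearity}~(\ref{it:approx_nonlinearity_convergence_weak}) is used to pass to the limit in the nonlinear boundary condition). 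Your direct route is shorter, but it shifts the burden onto exactly the point you flag at the end: one must justify that the normal traces $\partial_n^{\pm}u$ and Green's identity are available for $u=v+f_1$, which lies only in $BL^1$ with $\nabla u,\laplace u\in L^2$ rather than in $\HpLglobal{1}$; this is true (the normal trace is defined by the Green formula itself, which only involves $\nabla u$ and $\laplace u$), but it is precisely the technicality the paper's density step is designed to avoid. Either way the argument closes, so this is a legitimate, slightly more economical variant rather than a gap.
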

\begin{proof}
  We first show monotonicity. Let $x_1=(u_1,v_1)$, $x_2:=(u_2,v_2)$ in $\domain(\AA)$. Then
  \begin{align*}
    \innerproduct[BL^1 \times L^2]{\AA x_1 - \AA x_2}{x_1 - x_2}
    &=\innerproduct{\nabla v_1 - \nabla v_2}{\nabla u_1 - \nabla u_2} + 
      \innerproduct{\laplace u_1 - \laplace u_2}{v_1 - v_2} \\
    &= \dualproduct{\partial_n^- u_1 - \partial_n^- u_2}{\gamma^-v_1- \gamma^-v_2}
      - \dualproduct{\partial_n^+ u_1 - \partial^+_n u_2}{\gamma^+ v_1-\gamma^+v_2} \\
    &=-\dualproduct{\normaljump{u_1}-\normaljump{u_2}}{\gamma^- (v_1- v_2)}
      - \dualproduct{\partial_n^+ u_1 - \partial_n^+ u_2}{\tracejump{(v_1 - v_2)}} \\
    &=-\dualproduct{g(\tracejump{v_1}) - g(\tracejump{v_2})}{\tracejump{(v_1 - v_2)} } \\
    & \leq 0,
  \end{align*}  
   where in the last step, we used the definition of the domain of $\AA$,
   which contains the
   boundary conditions and the fact that $\normaljump{u_j} \in X_h$.
  The definition of $\HH$ from \eqref{eq:def_Hh} gives that
  $\tracejump{(v_1 - v_2)} \in Y_h$.

  Next we show $\operatorname{range}(I-\AA)=\mathcal{X}$, i.e., for $(x,y) \in \mathcal{X}$ 
  we have to find $U=(u,v) \in \domain(\AA)$ such that $U-\AA U = (x,y)$.
  In order to do so, we first assume $x \in \HpLglobal{1}$
  (a dense subspace of $BL^1$).  
  From the first equation, we get $u-v=x$, or $u=v+x$, which makes the second equation:
  $v - \laplace v = y + \laplace x$. For the boundary conditions this gives us the requirements
  \begin{align*}
    \gamma^- v &\in X_h^{\circ}, &  \tracejump{v} \in Y_h, \\
    \normaljump{u} &\in X_h, &\partial_n^+ v - g(\tracejump{v}) + \partial_n^+ x \in Y_h^{\circ} .
  \end{align*}
  This can be solved analogously to the proof of Lemma~\ref{lemma:diff_eq_u_ie}.
  The weak formulation is: find $v \in \HH$, such that for all $w_h \in \HH$
  \begin{align*}
    \ltwoproduct{v}{w_h} + \ltwoproduct{\nabla v}{\nabla w_h} + \dualproduct{g(\tracejump{v})}{\tracejump{w_h}}
    &=\ltwoproduct{y}{w_h} - \ltwoproduct{\nabla x}{\nabla w_h} \quad \forall w_h \in
                                                            \HH.
  \end{align*}
  Due to the monotonicity of the left-hand side this problem has a solution via
  Proposition~\ref{prop:browder_minty}. We then set $u = v - x $. 
  The fact that the the conditions on $\normaljump{u}$ hold follow from
  the same argument as
  in Lemma~\ref{lemma:diff_eq_u_ie}, using $(X_h^\circ)^{\circ} = X_h$. 
  We therefore have $(u,v) \in \domain(\AA)$.
  For general $X:=(x,y) \in \mathcal{X}$ we argue via a density argument. Let $X_n:=(x_n,y_n)$ be a sequence
  in $\mathcal{X} \cap \left(\HpLglobal{1} \times L^2(\R^d) \right)$ such that $X_n \to X$.
  Let $U_n:=(u_n,v_n)$ be the respective solutions to $(I - \AA)U_n=X_n$.
  From the monotonicity of $\AA$, we easily see that for $n,m \in \N$:
  $\norm{U_n - U_m}_{\mathcal{X}}\leq \norm{X_n - X_m}_{\mathcal{X}}$, which means $(U_n)$ is 
  Cauchy and converges to some $U=:\left({u},{v}\right)$.
  From the first equation $u_n - v_n = x_n$ we get that $v_n \to v$ in $\Hpglobal{1}$.
  From the second equation $v_n -\laplace u_n=y_n$we get $\laplace u_n \to \laplace u$ in $L^2$.
  Therefore we have $u_n \to u$ in $\HpLglobal{1}$, which implies $\normaljump{u_n} \to \normaljump{u} \in X_h$.
  From Lemma \ref{lemma:approx_nonlinearity}~(\ref{it:approx_nonlinearity_convergence_weak})
  we get $\dualproduct{g(u_n)}{\xi} \to \dualproduct{g(u)}{\xi}$, 
  which implies $\partial^+_n u - g(u) \in Y_h^\circ$. 
  The other trace conditions follow from the $H^1$-convergence of $v_n$.
  The existence of the semigroup then follows from the K\={o}mura-Kato theorem;
  see Proposition \ref{prop:semigroup_approx}.

  In order to see that the additional assumption on the intial data implies $u(t) \in \HH$, we look at the differential equation
  and integrate to obtain
  \begin{align*}
    \dot{u} &= v \quad \Rightarrow u(t)= \int_0^t{v(s) \;ds} + u(0).
  \end{align*}
  Since $\HH$ is a closed subspace and $u_0 = u(0) \in \HH$ it follows that $u(t) \in \HH$.

  The regularity results can be directly seen from the differential equation.
  We remark that the statement $u(t) \in H^1\left(\R^d \setminus \Gamma\right)$ instead of $BL^1$ is meant
  in the sense of ``we can choose a representative in the equivalence class''.
\end{proof}

\begin{remark}
With the choice $X_h:=H^{-1/2}(\Gamma)$ and $Y_h:=H^{1/2}(\Gamma)$, the semigroup of Lemma
  \ref{thm:semigroup} represents the exact solution for \eqref{eq:wave_eqn}. \eremk
\end{remark}

\subsection{Approximation theory in $\HH$}
\label{subsect:approximation_HH}
\label{sect:approx_in_hh}
In this section we investigate the properties of the spaces $\HH$ and introduce some
projection/quasi-interpolation operators required in the analysis.

We start by defining an operator, which in some sense represents a ``volume version'' of
$\bdryinterpY$; see Lemma~\ref{lemma:prop_of_pi_0}~(\ref{it:pi_0_trace_relation}).
\begin{definition}
\label{def:trace_ltwo_projector}
Let $\mathcal{L}: H^{1/2}(\Gamma) \to \Hpglobal{1}$ denote the continuous lifting operator, 
such that $\gamma^+\mathcal{L}v=v$ and $\mathcal{L} v = 0$ in $\Omega^-$.
Then we define the operator $\Pi_0$ as
\begin{align*}
  \Pi_0: \left\{ u \in \Hpglobal{1}\,:\, \gamma^-u = 0\right\} &\to \left(\HH,\norm{\cdot}_{\Hpglobal{1}}\right) \\
  v &\mapsto        v - \mathcal{L}\left( \left(I-\bdryinterpY \right)\gamma^+v\right) & \text{in } \R^d\setminus\Gamma.
\end{align*}
Recall in the above that $\bdryinterpY: H^{1/2}(\Gamma) \to (Y_h,\norm{\cdot}_{1/2})$ denotes a stable operator.
\end{definition}

In the next lemma, we collect some of the most important properties of $\Pi_0$.
\begin{lemma}
\label{lemma:prop_of_pi_0}
  The following statements hold:
  \begin{enumerate}[(i)]
   \item if $\bdryinterpY$ is a projection, then $\Pi_0$ is a projection,
   \item 
     \label{it:pi_0_trace_relation}
     $\displaystyle \tracejump{\Pi_0 u}= \bdryinterpY \tracejump{u}$ ,
   \item $\Pi_0$ is stable, with the constant depending only on $\Gamma$ and
     $\norm{\bdryinterpY}_{H^{1/2}(\Gamma) \to H^{1/2}(\Gamma) }$,
   \item $\Pi_0$ has the same approximation properties in the exterior domain as $\bdryinterpY$ on $\Gamma$, i.e.
     \begin{align*}
       \norm{u - \Pi_0 u}_{H^{1}(\Omega^+)}&\leq C \norm{\tracejump{u}-\bdryinterpY \tracejump{u}}_{H^{1/2}(\Gamma)}.
     \end{align*}
  \end{enumerate}
\end{lemma}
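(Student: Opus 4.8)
The plan is to prove all four assertions by direct computation from the definition $\Pi_0 u = u - \mathcal{L}\big((I-\bdryinterpY)\gamma^+ u\big)$, exploiting only the two defining features of the lifting: $\mathcal{L}w = 0$ in $\Omega^-$, so that $\gamma^-\mathcal{L}w = 0$, and $\gamma^+\mathcal{L}w = w$. First I would record the action of $\Pi_0$ on the two traces. For $u$ in the domain of $\Pi_0$ we have $\gamma^- u = 0$, hence $\gamma^-\Pi_0 u = \gamma^- u - \gamma^-\mathcal{L}\big((I-\bdryinterpY)\gamma^+ u\big) = 0$, while $\gamma^+\Pi_0 u = \gamma^+ u - (I-\bdryinterpY)\gamma^+ u = \bdryinterpY\gamma^+ u$.

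These two identities already deliver (ii) together with the well-definedness of the map. Since $\gamma^- u = 0$ we have $\tracejump{u} = \gamma^+ u$, so $\tracejump{\Pi_0 u} = \gamma^+\Pi_0 u - \gamma^-\Pi_0 u = \bdryinterpY\gamma^+ u = \bdryinterpY\tracejump{u}$, which is (ii); in particular $\tracejump{\Pi_0 u}\in Y_h$ and $\gamma^-\Pi_0 u = 0 \in X_h^\circ$, so $\Pi_0 u \in \HH$ and $\Pi_0$ indeed maps into $\HH$. For (iv) I would use that $u - \Pi_0 u = \mathcal{L}\big((I-\bdryinterpY)\gamma^+ u\big)$, so by continuity of $\mathcal{L}$ and $\gamma^+ u = \tracejump{u}$ one obtains $\norm{u-\Pi_0 u}_{H^1(\Omega^+)} \leq \norm{\mathcal{L}}\,\norm{\tracejump{u}-\bdryinterpY\tracejump{u}}_{H^{1/2}(\Gamma)}$, which is the claimed bound. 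Stability (iii) follows from the triangle inequality applied to the defining formula, estimating $\norm{\mathcal{L}\big((I-\bdryinterpY)\gamma^+ u\big)}_{H^1} \leq \norm{\mathcal{L}}\big(1+\norm{\bdryinterpY}\big)\norm{\gamma^+}\,\norm{u}_{H^1}$; since $\norm{\mathcal{L}}$ and the trace operator norm $\norm{\gamma^+}$ depend only on $\Gamma$, the stability constant depends only on $\Gamma$ and $\norm{\bdryinterpY}_{H^{1/2}(\Gamma)\to H^{1/2}(\Gamma)}$.

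Finally, for (i) I note that $\gamma^-\Pi_0 u = 0$ means $\Pi_0 u$ again lies in the domain of $\Pi_0$, so a second application is legitimate; since $\gamma^+\Pi_0 u = \bdryinterpY\gamma^+ u$, the correction term in $\Pi_0(\Pi_0 u)$ becomes $(I-\bdryinterpY)\bdryinterpY\gamma^+ u = (\bdryinterpY - \bdryinterpY^2)\gamma^+ u$, which vanishes precisely when $\bdryinterpY$ is a projection, giving $\Pi_0^2 = \Pi_0$. The computations are elementary and I do not expect a genuine obstacle; the only points needing care are verifying in (iii) that the stability constant involves only the $\Gamma$-dependent norms $\norm{\mathcal{L}}$ and $\norm{\gamma^+}$ alongside the norm of $\bdryinterpY$, and observing for (i) that idempotency rests exactly on $\bdryinterpY^2 = \bdryinterpY$ rather than on any finer property of the lifting.
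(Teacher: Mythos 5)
Your proof is correct and follows exactly the route the paper intends: the paper's own proof is the one-line remark that all four statements are immediate consequences of the definition and the continuity of $\mathcal{L}$ and $\bdryinterpY$, and your computations (trace identities, triangle inequality for stability, idempotency via $\bdryinterpY^2=\bdryinterpY$) are precisely those immediate consequences written out.
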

\begin{proof}
  All the statements are immediate consequences of the definition and the continuity
  of $\mathcal{L}$ and $\bdryinterpY$.
\end{proof}

In the analysis of time-stepping methods, the Ritz-projector plays a major role.
For our functional-analytic setting it takes the following form.
\begin{definition}
\label{def:ritz_projector}
Let $\alpha > 0$ be a fixed stabilization parameter. Define the Ritz-projector $\Pi_1$ as
\begin{align*}
  \Pi_1: \HpLglobal{1} &\to \HH, 
\end{align*}
where $\Pi_1 u \in \HH$ is the unique solution to
\begin{align*}
  \ltwoproduct{\nabla \Pi_1 u}{\nabla v_h} + \alpha \ltwoproduct{\Pi_1 u}{v_h}
  &= \ltwoproduct{\nabla u}{\nabla v_h} + \alpha \ltwoproduct{ u}{v_h} 
    + \dualproduct{\normaljump{u}}{\gamma^-v_h} \quad  \forall v_h \in \HH.
\end{align*}
\end{definition}

\begin{lemma}  
  The operator $\Pi_1$ has the following properties:
  \begin{enumerate}[(i)]
    \item $\Pi_1$ is a stable projection onto the space
      $\HH \cap \left\{u \in \HpLglobal{1}: \normaljump{u} \in X_h\right\}$
      with respect to the $\Hpglobal{1}$-norm.
    \item $\Pi_1$ almost reproduces the exterior normal trace:
      \begin{align*}
        \dualproduct{\partial_n^+ \Pi_1 u}{\xi} &=\dualproduct{\partial_n^+ u}{\xi}, \quad \forall \xi \in Y_h.
      \end{align*}
    \item $\Pi_1$ has the following approximation property:
  \begin{align*}
    \norm{ (I-\Pi_1) u}_{\Hpglobal{1}}
    &\leq C \left(\inf_{u_h \in \HH} {\norm{u-u_h}_{\Hpglobal{1}} }
      + \inf_{x_h \in X_h}{\norm{\normaljump{u} - x_h}_{H^{-1/2}(\Gamma)} } \right).
  \end{align*}
  \end{enumerate}
  For $u \in H^1(\R^d \setminus \Gamma)$, with $u|_{\Omega^-}=0$,
  this approximation problem can be reduced to the boundary spaces $X_h$, $Y_h$:
  \begin{align*}
    \norm{ (I-\Pi_1) u}_{\Hpglobal{1}}    
    &\leq C_1 \inf_{y_h \in Y_h}{\norm{\gamma^+u - y_h}}_{H^{1/2}(\Gamma)}
      + C_2 \inf_{x_h \in X_h}{\norm{\normaljump{u} - x_h}_{H^{-1/2}(\Gamma)} }.
  \end{align*}
  All the constants depend only on $\Gamma$ and $\alpha$.
\end{lemma}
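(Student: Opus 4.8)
The plan is to read $\Pi_1$ as the Ritz projection associated with the bilinear form $a(u,v) := \ltwoproduct{\nabla u}{\nabla v} + \alpha \ltwoproduct{u}{v}$ on the closed subspace $\HH \subseteq \Hpglobal{1}$. Since $\alpha > 0$, this form is equivalent to the $\Hpglobal{1}$ inner product even on the unbounded domain, so Lax--Milgram guarantees that $\Pi_1 u$ exists and is unique for every $u \in \HpLglobal{1}$, the right-hand side being a bounded functional on $\HH$ thanks to the trace estimate $\norm{\normaljump{u}}_{H^{-1/2}(\Gamma)} \lesssim \norm{u}_{\HpLglobal{1}}$. Testing the defining equation with $v_h = \Pi_1 u$ and invoking coercivity immediately yields $\norm{\Pi_1 u}_{\Hpglobal{1}} \lesssim \norm{u}_{\HpLglobal{1}}$, which is the stability asserted in (i).

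The structural core is an integration-by-parts computation. First I would test the defining equation with $v_h \in C_0^{\infty}(\R^d \setminus \Gamma) \cap \HH$: the boundary term vanishes and the identity $-\laplace \Pi_1 u + \alpha \Pi_1 u = -\laplace u + \alpha u$ holds distributionally in $\Omega^+$ and $\Omega^-$ separately, so in particular $\Pi_1 u \in \HpLglobal{1}$. I would then apply Green's identity on $\Omega^\pm$ to both sides for general $v_h \in \HH$, substitute the decompositions $\partial_n^\pm w = \normalmean{w} \pm \tfrac12 \normaljump{w}$ and $\gamma^\pm v = \tracemean{v} \pm \tfrac12 \tracejump{v}$, rewrite the extra term using $\gamma^- v_h = \tracemean{v_h} - \tfrac12 \tracejump{v_h}$, and cancel the matching volume contributions via the PDE. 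What remains is the boundary identity
\begin{align*}
\dualproduct{\normaljump{\Pi_1 u}}{\tracemean{v_h}} + \dualproduct{\normalmean{\Pi_1 u}}{\tracejump{v_h}} = \dualproduct{\normalmean{u}}{\tracejump{v_h}} + \tfrac12 \dualproduct{\normaljump{u}}{\tracejump{v_h}}, \qquad \forall v_h \in \HH.
\end{align*}
Specialising to $v_h$ with $\tracejump{v_h}=0$, i.e.\ liftings of arbitrary elements of $X_h^\circ$, isolates $\dualproduct{\normaljump{\Pi_1 u}}{\gamma^- v_h} = 0$, hence $\normaljump{\Pi_1 u} \in (X_h^\circ)^\circ = X_h$. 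Specialising instead to $v_h$ with $\gamma^- v_h = 0$ and recombining $\normalmean{\Pi_1 u} + \tfrac12 \normaljump{\Pi_1 u} = \partial_n^+ \Pi_1 u$ gives property (ii), namely $\dualproduct{\partial_n^+ \Pi_1 u}{\xi} = \dualproduct{\partial_n^+ u}{\xi}$ for all $\xi \in Y_h$.

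These two facts show $\Pi_1 u \in \HH \cap \{w \in \HpLglobal{1} : \normaljump{w} \in X_h\}$. Conversely, for any $w$ in this space the term $\dualproduct{\normaljump{w}}{\gamma^- v_h}$ vanishes for every $v_h \in \HH$, because $\gamma^- v_h \in X_h^\circ$ and $\normaljump{w} \in X_h$; thus $w$ solves the defining equation and $\Pi_1 w = w$ by uniqueness. Hence $\Pi_1$ is idempotent onto exactly this space, completing (i). For (iii) I would use the Galerkin relation $a((\Pi_1 - I)u, v_h) = \dualproduct{\normaljump{u}}{\gamma^- v_h}$; since $\gamma^- v_h \in X_h^\circ$, any $x_h \in X_h$ may be inserted to obtain $a((\Pi_1 - I)u, v_h) = \dualproduct{\normaljump{u} - x_h}{\gamma^- v_h}$. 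Taking $v_h = \Pi_1 u - u_h$ for arbitrary $u_h \in \HH$, bounding the right-hand side by $\norm{\normaljump{u} - x_h}_{H^{-1/2}(\Gamma)} \norm{v_h}_{\Hpglobal{1}}$ via the trace theorem, and dividing by $\norm{v_h}_{\Hpglobal{1}}$ gives $\norm{\Pi_1 u - u_h}_{\Hpglobal{1}} \lesssim \norm{u - u_h}_{\Hpglobal{1}} + \norm{\normaljump{u} - x_h}_{H^{-1/2}(\Gamma)}$; a triangle inequality and infima over $u_h \in \HH$ and $x_h \in X_h$ yield (iii).

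For the reduced estimate, when $u$ is supported in $\Omega^+$ one has $\gamma^- u = 0$, so I would replace the infimum over $\HH$ by the choice $u_h = \Pi_0 u$ and invoke Lemma~\ref{lemma:prop_of_pi_0}, which bounds $\norm{u - \Pi_0 u}_{\Hpglobal{1}}$ by $\norm{\tracejump{u} - \bdryinterpY \tracejump{u}}_{H^{1/2}(\Gamma)}$; the stability of the projection $\bdryinterpY$ onto $Y_h$ then converts this into $\inf_{y_h \in Y_h} \norm{\gamma^+ u - y_h}_{H^{1/2}(\Gamma)}$, while the $X_h$-term is carried over unchanged. I expect the main obstacle to be the sign- and decomposition-bookkeeping in the Green's-identity step, where an error would corrupt both the normal-jump membership and property (ii); the remaining arguments are the standard Lax--Milgram and Céa-type machinery.
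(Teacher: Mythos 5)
Your argument is correct and follows essentially the same route as the paper: Lax--Milgram for well-posedness and stability, the distributional PDE plus Green's identity to show $\normaljump{\Pi_1 u}\in X_h$ and the trace-reproduction property (your jump/mean bookkeeping checks out), vanishing of the consistency term $\dualproduct{\normaljump{w}}{\gamma^- v_h}$ for idempotency, and a C\'ea-type estimate with an inserted $x_h\in X_h$ for the approximation bound. The only deviation is the final reduced estimate, where you take $u_h=\Pi_0 u$ and hence land on $\norm{(I-\bdryinterpY)\gamma^+u}_{H^{1/2}(\Gamma)}$ rather than the stated infimum over $Y_h$ --- converting one into the other requires $\bdryinterpY$ to be a (quasi-optimal) projection onto $Y_h$, which holds in the paper's intended settings but is not assumed in the lemma; the paper avoids this by constructing $u_h$ directly from an $H^1(\Omega^+)$-lifting of $\gamma^+u-y_h$ for arbitrary $y_h\in Y_h$, extended by zero into $\Omega^-$.
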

\begin{proof}
  This operator is well defined and stable
  as $\HH$ is a closed subspace of $H^{1}(\R^d \setminus \Gamma)$ and the bilinear form used is elliptic. 
  The fact that $\Pi_1$ reproduces the normal jump follows from testing with $v \in C_0^{\infty}(\R^d)$
  to get the partial differential equation and then using an arbitrary $v$  together with integration by parts.
  In order to see $\normaljump{\Pi_1 u} \in X_h$ we follow the same argument
  as in the proof of Lemma~\ref{lemma:diff_eq_u_ie}. For $\xi \in X_h^\circ$
  we obtain by using a global $H^1$-lifting and integration by parts:
  \begin{align*}
    \dualproduct{\normaljump{\Pi_1 u}}{\xi} &=\dualproduct{\normaljump{u}}{\xi}
                                              - \dualproduct{\normaljump{u}}{\xi} =0.
  \end{align*}
  Thus $\normaljump{\Pi_1 u} \in \left(X_h^{\circ}\right)^{\circ} =X_h$ and
  $\operatorname{range}(\Pi_1) \subseteq \HH \cap \left\{u \in \HpLglobal{1}: \normaljump{u} \in X_h\right\}$.
  The fact that it is a projection follows from the fact that for $u \in \HH$ with
  $\normaljump{u} \in X_h$, the term $\dualproduct{\normaljump{u}}{\gamma^- v_h}$ vanishes.

  For any $u_h \in \HH$ and $x_h \in X_h$
  \begin{align*}
    \norm{ (I-\Pi_1) u}^2_{\Hpglobal{1}}
    &\lesssim \ltwoproduct{\nabla (I- \Pi_1) u}{\nabla (I-\Pi_1) u} 
      + \alpha \ltwoproduct{ (I-\Pi_1) u}{(I-\Pi_1) u}  \\
    &=\ltwoproduct{\nabla (I- \Pi_1) u}{\nabla u - \nabla u_h}
      + \alpha \ltwoproduct{ (I-\Pi_1) u}{ u - u_h}   
    + \dualproduct{\normaljump{u} - x_h}{\gamma^- \Pi_1 u - \gamma^- u_h} \\
    & \lesssim\begin{multlined}[t]
      \Hpnorm{1}{ (I - \Pi_1)u} \Hpnorm{1}{u-u_h} \\
      +\norm{\normaljump{ u} - x_h}_{H^{-1/2}(\Gamma)}\left(\Hpnorm{1}{u - \Pi_1 u} +
      \Hpnorm{1}{u - u_h} \right).\end{multlined}
  \end{align*}
  Young's inequality concludes the proof.
  
  For the second part, we need to estimate $\inf_{u_h \in \HH} {\norm{u-u_h}_{\Hpglobal{1}}}$.
  Let $y_h \in Y_h$ be arbitrary and let $\theta$ be a continuous $H^1$ lifting of $ \gamma^+ u-y_h$ to
  $\Omega^+$.
  Define $u_h \equiv 0$ in $\Omega^-$ and $u_h:=u-\theta$ in $\Omega^+$. Then we have by construction 
  $\tracejump{u_h}=y_h \in Y_h$ and therefore $u_h \in \HH$. For the norm we estimate:
  \begin{align*}
    \norm{ u -u_h}_{\Hpglobal{1}} &= \norm{ \theta}_{H^1(\Omega^+)} \leq C \norm{ \gamma^+ u - y_h}_{H^{1/2}(\Gamma)}.
  \end{align*}
  
\end{proof}

Since we are interested in the case of low regularity, it is often not enough to 
have $H^1$ stable projection operators. For this special case we need to make
an additional assumption on $Y_h$.
\begin{assumption}
\label{ass:X_tilde}
  For all $h >0$, there exist spaces ${Y}_h^{\Omega^-} \subseteq H^1(\Omega^-)$ such that  $Y_h \supseteq \gamma^- Y_h^{\Omega^-}$ and that there exists  a linear operator
  $\volumeinterpY: L^2(\Omega^-) \to {Y}_h^{\Omega^-}$ with the following properties: $\volumeinterpY$ is stable in the $L^2$ and $H^1$ norm
  and for $s=0,1$ satisfies the strong convergence
  \begin{align*}
    \norm{u - \volumeinterpY u}_{H^s(\Omega^-)} & \to  0  \quad \text{for $h\to 0$, } \; \forall u \in H^{s}(\Omega^-).
  \end{align*}
\end{assumption}
This allows us to define our final operator.
\begin{lemma}
\label{lemma:ltwo_stable_projection}
  Let $\mathcal{E}^{\pm}: H^{m}\left(\Omega^{\pm}\right) \to H^m(\R^d)$ denote the Stein extension operator
  from \cite[Chap.~{VI.3}]{stein70}, which is stable for all $m \in \N_0$.
  Then we define a new operator
  \begin{align*}
    \Pi_2 : L^2(\Omega^{+}) &\to \HH \\
    u&\mapsto u - \mathcal{E}^-\left(\left(I - \volumeinterpY \right) \mathcal{E}^+ u\right)
    \quad \text{in $\Omega^+$}
  \end{align*}
  and $\Pi_2 u:=0$ in $\Omega^-$; i.e.\ in order to get a correction term similar
  to the one for $\Pi_0$ we extend the function to the interior,project/interpolate
  to ${Y}_h^{\Omega^-}$ and extend it back outwards.

  This operator has the following nice properties:
  \begin{enumerate}[(i)]
    \item $\Pi_2$ is stable in $L^2$ and $H^1$,
    \item for $s \in [0,1]$:
      \label{it:approximation_of_pi2}
      $\displaystyle \Hpnorm{s}{u - \Pi_2 u }\lesssim \Hpnorm{s}{ (I-\volumeinterpY) \mathcal{E}^+ u }$,
    \item $\Pi_2 u \to u$ in $H^1$ and $L^2$ for $h \to 0$, without further regularity assumptions.      
  \end{enumerate}
\end{lemma}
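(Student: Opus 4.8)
The plan is to first verify that $\Pi_2$ actually lands in $\HH$, since properties (i)--(iii) then follow routinely from the stability and approximation properties of the building blocks $\mathcal{E}^{\pm}$ and $\volumeinterpY$. Throughout set $w:=(I-\volumeinterpY)\mathcal{E}^+ u$, viewed as a function on $\Omega^-$, so that by definition $\Pi_2 u = u - \mathcal{E}^- w$ in $\Omega^+$ and $\Pi_2 u = 0$ in $\Omega^-$. The membership $\Pi_2 u \in \HH$ is the crux, and I expect the trace bookkeeping to be the main obstacle. Because $\Pi_2 u \equiv 0$ in $\Omega^-$, the requirement $\gamma^- \Pi_2 u = 0 \in X_h^{\circ}$ holds trivially, so everything reduces to showing $\tracejump{\Pi_2 u} = \gamma^+ \Pi_2 u \in Y_h$. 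The decisive observation is that both Stein extensions are \emph{global} $H^1(\R^d)$ functions, hence have matching interior and exterior traces. Concretely, $\mathcal{E}^+ u|_{\Omega^+}=u$ gives $\gamma^- \mathcal{E}^+ u = \gamma^+ \mathcal{E}^+ u = \gamma^+ u$, while $\mathcal{E}^- w|_{\Omega^-}=w$ gives $\gamma^+ \mathcal{E}^- w = \gamma^- \mathcal{E}^- w = \gamma^- w$. Inserting these into $\gamma^+ \Pi_2 u = \gamma^+ u - \gamma^+ \mathcal{E}^- w$ and expanding $w$ yields the telescoping cancellation
\[
  \gamma^+ \Pi_2 u = \gamma^+ u - \gamma^- w = \gamma^+ u - \gamma^- \mathcal{E}^+ u + \gamma^-\volumeinterpY \mathcal{E}^+ u = \gamma^-\volumeinterpY \mathcal{E}^+ u,
\]
which lies in $\gamma^- Y_h^{\Omega^-} \subseteq Y_h$ by Assumption~\ref{ass:X_tilde}. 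This is exactly the ``volume'' analogue of the cancellation used for $\Pi_0$, and it is where the precise form of the correction term $\mathcal{E}^-\!\left((I-\volumeinterpY)\mathcal{E}^+ u\right)$ pays off.

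With membership in hand, property (i) is immediate from chaining the stabilities: $\norm{\mathcal{E}^- w} \lesssim \norm{w} \lesssim \norm{\mathcal{E}^+ u} \lesssim \norm{u}$ in both the $L^2$ and $H^1$ norms over the appropriate domains, using the $L^2$/$H^1$ stability of $\mathcal{E}^{\pm}$ together with Assumption~\ref{ass:X_tilde}, whence $\norm{\Pi_2 u} \le \norm{u} + \norm{\mathcal{E}^- w} \lesssim \norm{u}$. For property (ii) I use that $u - \Pi_2 u = \mathcal{E}^- w$ in $\Omega^+$ and vanishes in $\Omega^-$, so
\[
  \Hpnorm{s}{u - \Pi_2 u} = \Hpnorm[\Omega^+]{s}{\mathcal{E}^- w} \le \norm{\mathcal{E}^- w}_{H^s(\R^d)} \lesssim \Hpnorm[\Omega^-]{s}{w},
\]
invoking stability of the Stein extension for $s\in\{0,1\}$ and interpolation for intermediate $s\in[0,1]$; the right-hand side is precisely $\Hpnorm[\Omega^-]{s}{(I-\volumeinterpY)\mathcal{E}^+ u}$ as claimed (the norm being understood on $\Omega^-$, where $\volumeinterpY\mathcal{E}^+ u$ lives).

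Finally, property (iii) follows by feeding the strong convergence of Assumption~\ref{ass:X_tilde} into (ii). For $s=0$ one has $\mathcal{E}^+ u|_{\Omega^-}\in L^2(\Omega^-)$ for any $u\in L^2(\Omega^+)$, so $\Hpnorm[\Omega^-]{0}{(I-\volumeinterpY)\mathcal{E}^+ u}\to 0$ and hence $\Pi_2 u \to u$ in $L^2$; for $s=1$ one has $\mathcal{E}^+ u|_{\Omega^-}\in H^1(\Omega^-)$ for any $u\in H^1(\Omega^+)$, giving $H^1$-convergence, in each case with no smoothness beyond the ambient space. The only subtlety is that the error norm over $\R^d\setminus\Gamma$ carries no interior contribution, since both $\Pi_2 u$ and the zero-extension of $u$ vanish in $\Omega^-$; this is what makes the estimates reduce cleanly to the behaviour of $\volumeinterpY$ on $\Omega^-$.
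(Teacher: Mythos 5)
Your proposal is correct and follows essentially the same route as the paper: the trace cancellation $\gamma^+ \Pi_2 u = \gamma^-\volumeinterpY \mathcal{E}^+ u \in Y_h$ for membership in $\HH$, the stability of the Stein extensions for (i) and (ii), and the strong convergence of $\volumeinterpY$ from Assumption~\ref{ass:X_tilde} for (iii). You merely spell out details the paper leaves implicit (the trivial condition $\gamma^-\Pi_2 u = 0$, the interpolation step for intermediate $s$, and the bookkeeping of which domain each norm lives on).
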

\begin{proof}
  In order to see that $\Pi_2 u \in \HH$, we have to show $\gamma^+ \Pi_2 u \in Y_h$. We calculate: 
  \begin{align*}
    \gamma^+ \Pi_2 u&=\gamma^+ u - \gamma^+ u  + \gamma^- \volumeinterpY \mathcal{E}^+u \in Y_h.
  \end{align*}
  For the approximation properties, we use the continuity of the Stein extension:
  \begin{align*}
    \Hpnorm{s}{u - \Pi_2 u}
    &=\Hpnorm{s}{\mathcal{E}^- \left(\left(I - \volumeinterpY\right) \mathcal{E}^+ u\right)}
    \lesssim \Hpnorm{s}{\left(I - \volumeinterpY \right) \mathcal{E}^+ u}.
  \end{align*}
  The extension $\mathcal{E}^+ u$ has the same regularity as $u$, thus we 
  end up with the correct convergence rates of ${Y}_h^{\Omega^-}$.
\end{proof}

\begin{remark}
  While Assumption~\ref{ass:X_tilde} looks somewhat artificial, in most cases it is easily verified, 
  as we can construct a ``virtual'' triangulation of $\Omega^-$ with piecewise polynomials.
  The projection operator $\volumeinterpY$ is then given by the (volume averaging based) Scott-Zhang  operator or for high order methods some quasi-interpolation operator (e.g. \cite{melenk_hp_interpolation}). \eremk
\end{remark}
\begin{remark}
  The use of a space on $\Omega^-$ is arbitrary and made to reflect the fact that usually  a natural triangulation on $\Omega^-$  is given. An artificial layer of triangles around $\Gamma$ in $\Omega^+$ could have been used instead and would have allowed us to drop the extension operator to the interior. \eremk
\end{remark}

To conclude this section, we investigate the convergence property of the nonlinearity $g$.
\begin{lemma}
\label{lemma:approx_nonlinearity}
Let $\eta \in H^{1/2}(\Gamma)$ and $\eta_h \in H^{1/2}(\Gamma)$ be such that
  $\eta_h$ converges to $\eta$  weakly, i.e., $\eta_h \rightharpoonup \eta$ for $h \to 0$. 
  Then the following statements hold:
  \begin{enumerate}[(i)]
    \item
      \label{it:approx_nonlinearity_convergence_weak}
      $g(\eta_h) \rightharpoonup g(\eta)$ in $H^{-1/2}(\Gamma)$.
    \item
      \label{it:approx_nonlinearity_convergence_weak2}
      If $\eta_h \to \eta$ strongly in $H^{1/2}(\Gamma)$, then
      $g(\eta_h) \to g(\eta)$ in $H^{-1/2}(\Gamma)$, i.e., 
      the operator $g: H^{1/2}(\Gamma) \to H^{-1/2}(\Gamma)$ is continuous.
    \item 
      \label{it:approx_nonlinearity_convergence_strong}
      Assume that $\eta_h \to \eta$ strongly in $H^{1/2}(\Gamma)$ and $|g(\mu)|\leq C (1+|\mu|^{p})$ with $p < \infty$ for $d=2$ and
      $p \leq \frac{d-1}{d-2}$, then the convergence is strong in $L^2$:
      \begin{align*}
        \|{g(\eta) -g(\eta_h)\|}_{L^2(\Gamma)} \to 0.
      \end{align*}
    \item
      \label{it:approx_nonlinearity_est_with_growth}
      Assume $|g'(s)|\leq C_{g'} (1+|s|^{q})$, 
      where $q <\infty$ is arbitrary for $d=2$, and $q\leq 1$ for $d=3$.
      Then we have the following estimates:
      \begin{align*}
        \|g(\eta) - g(\eta_h)\|_{L^2(\Gamma)}
        &\leq C(\eta) \|\eta - \eta_h\|_{L^{2+\varepsilon}(\Gamma)} & \text{ for $d=2,\; \forall \varepsilon > 0$},\\
        \|g(\eta) - g(\eta_h)\|_{L^2(\Gamma)} 
        &\leq C(\eta) \|\eta - \eta_h\|_{H^{1/2}(\Gamma)},
      \end{align*} 
      where the constant $C(\eta)$ does not depend on $h$.
    \item
      \label{it:approx_nonlinearity_est_with_l_infty}
      Alternatively, if $\eta \in L^{\infty}(\Gamma)$ and $\|\eta_h - \eta\|_{L^{\infty}} \leq C_{\infty}$ 
      we have
      \begin{align*}
        \|g(\eta) - g(\eta_h)\|_{L^2(\Gamma)}
        &\leq C(\|\eta\|_{\infty},C_{\infty}) \|\eta-\eta_h\|_{L^{2}(\Gamma)}.
      \end{align*}
  \end{enumerate}
\end{lemma}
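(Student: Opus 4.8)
The unifying idea is that $g$ acts as a Nemytskii (superposition) operator, so all five statements reduce to mapping and continuity properties of this operator between Lebesgue spaces, transported to $H^{\pm 1/2}(\Gamma)$ by the Sobolev embeddings \eqref{eq:sobolev_embedding} and by duality. Throughout I would fix the conjugate pair $p'=\frac{2d-2}{d-2}$, $q'=\frac{2d-2}{d}$ already used in Lemma~\ref{lemma:g_is_bounded} (for $d\geq 3$; in $d=2$ every finite exponent is admissible), so that $H^{1/2}(\Gamma)\hookrightarrow L^{p'}(\Gamma)$ and dually $L^{q'}(\Gamma)\hookrightarrow H^{-1/2}(\Gamma)$. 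The growth condition Assumption~\ref{assumption:nonlinearity_g}(\ref{assumption:nonlinearity_g_growth}) is then calibrated exactly so that $g$ maps the critical space $L^{q'p}(\Gamma)=L^{(2d-2)/(d-2)}(\Gamma)$ continuously into $L^{q'}(\Gamma)$, since $q'p=\frac{2d-2}{d-2}$ when $p$ attains its maximal value.

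For the two strong-convergence statements (\ref{it:approx_nonlinearity_convergence_weak2}) and (\ref{it:approx_nonlinearity_convergence_strong}) I would invoke the classical continuity theorem for Nemytskii operators: if $\abs{g(\mu)}\leq C(1+\abs{\mu}^p)$ and $g$ is continuous, then the induced map $L^{ps}(\Gamma)\to L^s(\Gamma)$ is continuous (this can also be done elementarily through a.e.\ convergence and a uniform-domination argument). For (\ref{it:approx_nonlinearity_convergence_weak2}) I take $s=q'$: strong convergence $\eta_h\to\eta$ in $H^{1/2}(\Gamma)$ gives strong convergence in $L^{q'p}(\Gamma)$ by the continuous critical embedding, hence $g(\eta_h)\to g(\eta)$ in $L^{q'}(\Gamma)$, and the dual embedding $L^{q'}(\Gamma)\hookrightarrow H^{-1/2}(\Gamma)$ delivers convergence there. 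For (\ref{it:approx_nonlinearity_convergence_strong}) I take $s=2$; the sharper hypothesis $p\leq\frac{d-1}{d-2}$ is precisely what forces $2p\leq\frac{2d-2}{d-2}$, so $H^{1/2}(\Gamma)\hookrightarrow L^{2p}(\Gamma)$ and the same continuity gives $g(\eta_h)\to g(\eta)$ directly in $L^2(\Gamma)$.

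The genuinely delicate step, which I expect to be the main obstacle, is the weak-convergence claim (\ref{it:approx_nonlinearity_convergence_weak}): at the borderline growth $p=\frac{d}{d-2}$ the embedding $H^{1/2}(\Gamma)\hookrightarrow L^{q'p}(\Gamma)$ into the critical space is only continuous, not compact, so weak $H^{1/2}$ convergence cannot be upgraded directly to strong $L^{q'p}$ convergence. The plan is to argue via almost-everywhere convergence. Given any subsequence of $(\eta_h)$, boundedness in $H^{1/2}(\Gamma)$ together with the compact embedding into a subcritical $L^r(\Gamma)$ extracts a further subsequence with $\eta_h\to\eta$ a.e.\ on $\Gamma$, whence $g(\eta_h)\to g(\eta)$ a.e.\ by continuity of $g$. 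Simultaneously the growth bound and the boundedness of $\eta_h$ in the critical space $L^{q'p}(\Gamma)$ keep $g(\eta_h)$ bounded in the reflexive space $L^{q'}(\Gamma)$ (note $q'=2-2/d>1$). Boundedness in a reflexive space combined with a.e.\ convergence forces $g(\eta_h)\rightharpoonup g(\eta)$ in $L^{q'}(\Gamma)$ along that sub-subsequence, so $\dualproduct{g(\eta_h)}{\xi}\to\dualproduct{g(\eta)}{\xi}$ for every $\xi\in H^{1/2}(\Gamma)\subseteq L^{p'}(\Gamma)=(L^{q'}(\Gamma))'$. Since the limit is always $\dualproduct{g(\eta)}{\xi}$ regardless of the subsequence, the full sequence converges and we obtain weak convergence in $H^{-1/2}(\Gamma)$.

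The two quantitative estimates (\ref{it:approx_nonlinearity_est_with_growth}) and (\ref{it:approx_nonlinearity_est_with_l_infty}) are handled by the fundamental theorem of calculus: writing $g(\eta)-g(\eta_h)=\big(\int_0^1 g'(\eta_h+t(\eta-\eta_h))\,dt\big)(\eta-\eta_h)$ and using $\abs{g'(s)}\leq C_{g'}(1+\abs{s}^q)$ yields the pointwise bound $\abs{g(\eta)-g(\eta_h)}\leq C(1+\abs{\eta}^q+\abs{\eta_h}^q)\abs{\eta-\eta_h}$. For (\ref{it:approx_nonlinearity_est_with_growth}) I would then apply Hölder's inequality in $L^2(\Gamma)$ with exponents chosen so that the difference factor lands in a norm controlled by $H^{1/2}(\Gamma)$: in $d=3$ with $q\leq 1$ the split into $L^2(\Gamma)\cdot L^2(\Gamma)$ works, since $L^{4q}(\Gamma)\supseteq H^{1/2}(\Gamma)$ bounds the $g'$-factor and $\norm{\eta-\eta_h}_{L^4(\Gamma)}\lesssim\norm{\eta-\eta_h}_{H^{1/2}(\Gamma)}$; in $d=2$, where all Lebesgue exponents are admissible, choosing the dual exponent $1+\varepsilon/2$ accommodates the factor $\norm{\eta-\eta_h}_{L^{2+\varepsilon}(\Gamma)}$ for arbitrary $\varepsilon>0$. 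The uniform boundedness of the convergent sequence $(\eta_h)$ in $H^{1/2}(\Gamma)$ lets the $\eta_h$-dependent terms be absorbed into a constant $C(\eta)$ independent of $h$. Finally, (\ref{it:approx_nonlinearity_est_with_l_infty}) is immediate: the hypotheses bound $\eta_h$ uniformly in $L^\infty(\Gamma)$, and on the resulting compact range $g$ is Lipschitz by Assumption~\ref{assumption:nonlinearity_g}(\ref{assumption:nonlinearity_g_c1}), giving the pointwise Lipschitz bound and hence the $L^2(\Gamma)$ estimate directly.
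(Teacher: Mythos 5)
Your proposal is correct and follows essentially the same route as the paper: almost-everywhere convergence extracted via compact embeddings, domination supplied by the growth condition, and the subsequence principle for the qualitative parts, with the fundamental theorem of calculus plus H\"older/Cauchy--Schwarz for the quantitative estimates. The only cosmetic difference is that you invoke the Nemytskii continuity theorem and the ``bounded in a reflexive $L^{q'}$ plus a.e.\ convergence implies weak convergence'' lemma by name, where the paper reproves these ad hoc via dominated convergence and the Eberlein--\v{S}mulian theorem.
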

\begin{proof}
  We focus on the case $d\geq 3$, the case $d=2$ follows along the same lines but is simpler since the Sobolev embeddings
  hold for arbitrary $p \in [1,\infty)$.

  Ad (\ref{it:approx_nonlinearity_convergence_weak}):  
  Since weakly convergent sequences are bounded (see~\cite[Theorem  1(ii), Chapter V.1]{yosida_fana}),
  we can apply Lemma~4.1 to get that 
  $g(\eta_h)$ is uniformly bounded in $H^{-1/2}(\Gamma)$. By the Eberlein-\v{S}mulian theorem, see~\cite[page~141]{yosida_fana},
  this implies that there exists a subsequence $g(\eta_{h_j})$, $j\in \mathbb{N}$, which converges weakly to some limit $\widetilde{g} \in H^{-1/2}(\Gamma)$.
  We need to identify the limit $\widetilde{g}$ as $g(\eta)$. By Rellich's theorem, the sequence $\eta_h$ converges to $\eta$ in $H^{s}(\Gamma)$ for $s < 1/2$,
  and using Sobolev embeddings we get that $\eta_{h_j} \to \eta$ in $L^{p'}(\Gamma)$ for $p' < \frac{2d-2}{d-2}$.
  Standard results from measure theory (e.g.~\cite[ Theorem IV.9]{brezis})
  then give (up to picking another subsequence) that $\eta_{h_j} \to \eta$ pointwise almost everywhere and there 
  exists an upper bound $\zeta \in L^{p'}(\Gamma)$ such that $|\eta_{h_j}|\leq |\zeta|$ almost everywhere.
  The growth conditions on $g$ are such that $C(1+|\zeta|^p)$ is an integrable upper bound of $g(\eta_h)$.
  By the continuity of $g$ we also get $g(\eta_{h_j}) \to g(\eta)$ almost everywhere.
  For test functions $\phi \in C^{\infty}(\Gamma)$, we get:
  \begin{align*}
    \int_{\Gamma}{g(\eta_{h_j}) \phi} \to \int_{\Gamma}{g(\eta) \phi}
  \end{align*}
  by the dominated convergence theorem (note that $\phi$ is bounded). On the other hand, since $C^{\infty}(\Gamma) \subseteq H^{1/2}(\Gamma)$,
  we get $\left<g(\eta_{h_j}),\phi\right>_{\Gamma} \to \left<\widetilde{g},\phi\right>_{\Gamma}$ due to the weak convergence.
  Since $C^{\infty}(\Gamma)$ is dense in $H^{1/2}(\Gamma)$, we get $g(\eta)=\widetilde{g}$. 
  This proof can be repeated for every subsequence, thus the whole sequence must converge weakly to $g(\eta)$.
  
  Ad~(\ref{it:approx_nonlinearity_convergence_weak2}):
  By the Sobolev embedding theorem, we have
  \begin{align*}
    \|g(\eta) - g(\eta_h)\|_{H^{-1/2}(\Gamma)}
    &=\sup_{0\neq v \in H^{1/2}(\Gamma)}{\frac{\left<{g(\eta) - g(\eta_h),v}\right>_{\Gamma}}{\|{v}\|_{H^{1/2}(\Gamma)}}} \\
    &\lesssim \|{g(\eta) - g(\eta_h)}\|_{L^{p'}(\Gamma)}.
  \end{align*}
  for $p':=(2d-2)/d$ for $d\geq 3$, and $p'>1$ for $d=2$.
  
  If $\eta_h \to \eta$ in $H^{1/2}(\Gamma)$, the Sobolev-embeddings give $\eta_h \to \eta$ in $L^{q}(\Gamma)$ for $q\leq \frac{2d-2}{d-2}$.  
  Again by [Bre83, Theorem IV.9], this implies that there exists a sub-sequence $\eta_{h_j}$, which converges pointwise almost everywhere
  and there exists a function $\zeta \in L^{q}(\Gamma)$ such that $|\eta_h|\leq |\zeta|$ almost everywhere.
  From the growth conditions on $g$, we get that $(1 + |\zeta|^{p}) \in L^{p'}(\Gamma)$.
  Since $g$ is continuous by assumption,
  $g(\eta_{h_j})$ converges to $g(\eta)$ pointwise almost everywhere. By the dominated convergence theorem
  this implies  $\int_{\Gamma}{|g(\eta_{h_j}) - g(\eta)|^{q'}} \to 0$.
  The same argument can be applied to show that every sub-sequence of $g(\eta_h)$
  has a sub-sequence that converges to $g(\eta)$ in $H^{-1/2}(\Gamma)$. This is sufficient to show that the whole sequence converges.

  Ad~(\ref{it:approx_nonlinearity_convergence_strong}):
  Follows along the same lines as~(\ref{it:approx_nonlinearity_convergence_weak2}). Instead of estimating the $H^{-1/2}$-norm by the $p'$ norm via the duality argument,
  we can directly work in $L^2$.
  Due to our restrictions on $g$ and the Sobolev embedding, we get an
  upper bound $C (1 + |\zeta|^p)$ in $L^2(\Gamma)$, which allows us to apply the same argument as before to get convergence.

  Ad~(~\ref{it:approx_nonlinearity_est_with_growth}):
  Using the growth condition on $g'$ we estimate for fixed $x \in \Gamma$:
  \begin{align}    
    |g(\eta(x)) - g(\eta_h(x))|
    &= \left|\int_{\eta_h(x)}^{\eta(x)}{ g'(\xi) \,d\xi}\right|\leq \left|\eta_h(x) - \eta(x)\right|\sup_{\xi \in [\eta_h(x), \eta(x)]} {|g'(\xi)|} \nonumber\\ 
    &\leq \left|\eta_h(x) - \eta(x)\right| C\left(1+\max\left(|\eta_h(x)|^q,|\eta(x)|^q\right)\right).      \label{nlw:eq:approx_nonlinearity_int1}
  \end{align}
  In the case $d=3$, we use the Cauchy-Schwarz inequality to estimate:
  \begin{align*}
    \|g(\eta) - g(\eta_h)\|_{L^2(\Gamma)}^{2}
    &\lesssim \|\left(1+\max(|\eta_h|^q,|\eta|^q)\right)^2\|_{L^2(\Gamma)} \|\left(\eta - \eta_h\right)^{2}\|_{L^2(\Gamma)} \\
    &\lesssim \left(1+ \|\eta_h\|_{L^{4q}(\Gamma)}^2 + \|{\eta}\|_{L^{4q}(\Gamma)}^2\right)\|{\left(\eta - \eta_h\right)}\|^2_{L^4(\Gamma)} \\
    &\lesssim \left(1+ \|\eta_h\|_{H^{1/2}(\Gamma)}^2 + \|{\eta}\|_{H^{1/2}(\Gamma)}^2\right)\|{\left(\eta - \eta_h\right)}\|^2_{H^{1/2}(\Gamma)},
  \end{align*}
  where in the last step we used the Sobolev embedding.
  Since weakly  convergent sequences are bounded, the first term can be uniformly bounded with respect to $h$, which shows~(iv) for $d\geq 3$.

  In the case $d=2$, we have by Sobolev's embedding that $\|{\max(|{\eta_h}|,|\eta|)}\|_{L^{p'}(\Gamma)}$ can be 
  bounded independently of $h$ for arbitrary  $p' > 1$. Using~\eqref{nlw:eq:approx_nonlinearity_int1} to 
  estimate the difference and applying Hölders inequality then proves (iv) in the case $d=2$.  

  Ad (\ref{it:approx_nonlinearity_est_with_l_infty}):
  We just remark that, since $g$ is assumed $C^1$, we have that $g'$ is bounded on compact  subsets of $\mathbb{R}$. Arguing as before, 
  the supremum   $\sup_{\xi}({g'(\xi))}$
  is therefore uniformly bounded, from which the stated result follows by applying Hölder's inequality and the Sobolev embedding.
\end{proof}

\subsubsection{Convergence of  the semidiscretization in space}
\label{subsect:convergence_sd_in_space}
We now focus on the discretization error, due to the spaces $X_h$ and $Y_h$.
In order to do so, we recast the semigroup solutions from Lemma \ref{thm:semigroup} into
a weak formulation.
\begin{lemma}
  Let $\HH$ be defined as in Lemma \ref{lemma:diff_eq_u_ie}. Then the
  semigroup solution \eqref{eq:ode_semigroup} denoted by $(u_h,v_h)(t)$ solves
  
  \begin{subequations}
    \label{eq:weak_form_sg_space_d}
  \begin{align}
    \ltwoproduct{\nabla \dot{u}_h(t)}{\nabla w_h}&=\ltwoproduct{\nabla v_h(t)}{\nabla w_h}, \\
    \ltwoproduct{\dot{v}_h(t)}{z_h} &=-\ltwoproduct{\nabla u_h(t)}{\nabla z_h} 
                                   -\dualproduct{g(\tracejump{v_h})}{\tracejump{z_h}},
  \end{align}
  \end{subequations}
  for all $(w_h,z_h) \in BL^1 \times \HH$ and $t> 0$.
  The exact solution satisfies
  \begin{subequations}
    \label{eq:weak_form_sg_space_c}
  \begin{align}
    \ltwoproduct{\nabla \dot{u}(t)}{\nabla w_h}&=\ltwoproduct{\nabla v(t)}{\nabla w_h}, \\
    \ltwoproduct{\dot{v}(t)}{z_h} &=-\ltwoproduct{\nabla u(t)}{\nabla z_h} 
                                 -\dualproduct{g(\tracejump{v})}{\tracejump{z_h}} 
                                 - \dualproduct{\normaljump{u}}{\gamma^- z_h},
                                 \label{eq:weak_form_sg_space_c_eq2}
  \end{align}
  \end{subequations}
  for all $(w_h,z_h) \in BL^1 \times \HH$ and $t > 0$.
\end{lemma}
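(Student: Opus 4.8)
The plan is to read both variational systems directly off the pointwise-in-$t$ identities that the semigroup provides, and then to discard the resulting boundary pairings using the conditions encoded in $\domain(\AA)$. By Proposition~\ref{prop:komura_kato} and Theorem~\ref{thm:semigroup} the semigroup solution satisfies $\dot{u}=v$ and $\dot{v}=\laplace u$ for almost every $t$, with $(u(t),v(t))\in\domain(\AA)$ (the discrete domain for $(u_h,v_h)$, the continuous one, $X_h=H^{-1/2}(\Gamma)$, $Y_h=H^{1/2}(\Gamma)$, for the exact $(u,v)$). The first equation of each system is then immediate: since $\dot{u}=v$ holds in $BL^1$, applying $\nabla$ and pairing with $\nabla w_h$ gives $\ltwoproduct{\nabla\dot{u}}{\nabla w_h}=\ltwoproduct{\nabla v}{\nabla w_h}$ for every $w_h\in BL^1$. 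No integration by parts enters here, so this step is routine and identical for the discrete and the continuous solution.

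The content lies in the second equation, which I would obtain by testing $\dot{v}=\laplace u$ against $z_h\in\HH$ and integrating by parts separately over $\Omega^-$ and $\Omega^+$. Because $(u,v)\in\domain(\AA)$ forces $\laplace u\in L^2(\R^d\setminus\Gamma)$ and $u\in\HpLglobal{1}$ locally, the normal traces $\partial_n^\pm u$ exist in $H^{-1/2}(\Gamma)$, while $z_h\in\HH\subseteq H^1(\R^d\setminus\Gamma)$ supplies traces $\gamma^\pm z_h\in H^{1/2}(\Gamma)$ together with enough decay at infinity to kill the exterior boundary contribution. Keeping track of the two outward normals (recall $n$ points out of $\Omega^-$) and using $\gamma^+z_h=\gamma^-z_h+\tracejump{z_h}$, Green's identity yields
\begin{align*}
  \ltwoproduct{\dot{v}}{z_h}=\ltwoproduct{\laplace u}{z_h}
  =-\ltwoproduct{\nabla u}{\nabla z_h}
   -\dualproduct{\normaljump{u}}{\gamma^- z_h}
   -\dualproduct{\partial_n^+ u}{\tracejump{z_h}}.
\end{align*}

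It then remains to simplify the two boundary pairings via the defining conditions of the respective domains. For the discrete solution I would use $\tracejump{z_h}\in Y_h$ together with $\partial_n^+ u_h-g(\tracejump{v_h})\in Y_h^\circ$ to replace $\dualproduct{\partial_n^+ u_h}{\tracejump{z_h}}$ by $\dualproduct{g(\tracejump{v_h})}{\tracejump{z_h}}$, and $\normaljump{u_h}\in X_h$ together with $\gamma^- z_h\in X_h^\circ$ to annihilate the remaining term, $\dualproduct{\normaljump{u_h}}{\gamma^- z_h}=0$; this produces exactly \eqref{eq:weak_form_sg_space_d}. For the exact solution the spaces degenerate, so $Y_h^\circ=\{0\}$ forces $\partial_n^+ u=g(\tracejump{v})$ in $H^{-1/2}(\Gamma)$ and again yields the $g$-term; however $\normaljump{u}$ now lies only in the unconstrained $H^{-1/2}(\Gamma)$ and is paired against $\gamma^- z_h$, which still belongs to the \emph{discrete} annihilator $X_h^\circ$ and hence does not vanish, leaving precisely the extra term $-\dualproduct{\normaljump{u}}{\gamma^- z_h}$ of \eqref{eq:weak_form_sg_space_c}.

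I expect the principal obstacle to be organizational rather than deep: getting the signs in Green's identity correct across the two subdomains and justifying the vanishing of the term at infinity from $z_h\in H^1(\R^d\setminus\Gamma)$. The conceptually important observation—which I would state explicitly, as it drives the subsequent convergence analysis—is that the \emph{only} difference between the two variational systems is the single pairing $\dualproduct{\normaljump{u}}{\gamma^- z_h}$: it is annihilated for the Galerkin solution precisely because its normal jump is constrained to $X_h$, whereas it survives for the exact solution tested against the discrete space.
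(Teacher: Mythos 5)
Your proposal is correct and follows essentially the same route as the paper, which dispatches the lemma as ``a simple consequence of the definition of $\domain(\AA)$ and integration by parts''; your Green's-identity bookkeeping, the replacement of $\dualproduct{\partial_n^+ u}{\tracejump{z_h}}$ via the $Y_h^\circ$ condition, and the identification of $\dualproduct{\normaljump{u}}{\gamma^- z_h}$ as the sole surviving difference (annihilated for the Galerkin solution because $\normaljump{u_h}\in X_h$ while $\gamma^- z_h\in X_h^\circ$) all match the paper's intent, with correct signs. Your write-up is simply a more explicit version of the paper's one-line argument.
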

\begin{proof}
  This is a simple consequence of \eqref{eq:diff_eq_u_ie}, the definition of $\domain(\AA)$ and integration by parts.
  We just note that the last term in \eqref{eq:weak_form_sg_space_c_eq2}, not there in a straight-forward weak formulation of the exterior problem \eqref{eq:wave_eqn}, appears
  because we replaced $\gamma^+ z_h$ with $\tracejump{z_h}$ in the boundary term containing the nonlinearity.
  The difference to \eqref{eq:weak_form_sg_space_d} with
  $X_h$ and $Y_h$ as the full space, is because of the condition $\gamma^- z_h \in X_h^{\circ}$, which
  would imply $\gamma^-z_h = 0$ for the full space case.
\end{proof}

Now that we have developed the appropriate approximation theory in the space $\HH$, we are able to quantify the convergence of the 
semidiscrete solution to the continuous one. This is the content of the next theorem.

\begin{theorem}
\label{thm:full_convergence_low_regularity}
  Assume that there exists an $L^2$-stable projection $\Pi_2$ onto $\HH$
  with $\ltwonorm{u - \Pi_2 u} \to 0$ for $h \to 0$
  as described in Lemma~\ref{lemma:ltwo_stable_projection}.
  
  Introducing the error functions 
  \begin{align*}
    \rho(t)&:=\colvec{\rho_u(t) \\ \rho_v(t)}:=\colvec{u - \Pi_1 u \\v - \Pi_2 v}, \\
    \theta(t)&:=g(\tracejump{v}(t))-g(\tracejump{\Pi_2 v(t)}),
  \end{align*}
  the convergence rate can be quantified as
  \begin{align}
    \label{eq:err_est_expl_projections}
    \ltwonorm{v_h(t) - v(t)}^2 &+ \ltwonorm{\nabla{u_h(t}) - \nabla{u}(t)}^2 +
    \beta \int_{0}^{t}{\ltwonorm[\Gamma]{v(\tau)-v_h(\tau)}^2 \;d\tau} \\
    &\begin{multlined}[t]\lesssim \ltwonorm{v_h(0)-v(0)}^2 + \ltwonorm{\nabla u_h(0) - \nabla u(0)}^2  
      + T \int_{0}^{t}{\norm{\dot{\rho}(\tau)}^2_{\mathcal{X} } d\tau}\\
      +T \int_{0}^{t}{  \norm{\rho(\tau)}^2_{L^2(\R^d \setminus \Gamma)\times \Hpglobal{1}} 
      + \beta^{-1} \ltwonorm[\Gamma]{\theta(\tau)}^2 \;d\tau}.\end{multlined}
  \end{align}   
  The implied constant depends only on the stabilization parameter $\alpha$ from Definition \ref{def:ritz_projector}.

  If the operator $g: H^{1/2}(\Gamma) \to L^2(\Gamma)$ is continuous 
  (see Lemma~\ref{lemma:approx_nonlinearity}~(\ref{it:approx_nonlinearity_convergence_strong}) for a sufficient
  condition), then the right-hand side converges to $0$ for $h \to 0$.
\end{theorem}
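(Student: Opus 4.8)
The plan is to run an energy argument on the difference between the semidiscrete weak form \eqref{eq:weak_form_sg_space_d} and the continuous one \eqref{eq:weak_form_sg_space_c}, using the Ritz projector $\Pi_1$ for $u$ and the $L^2$-stable projector $\Pi_2$ for $v$ as intermediaries. First I would set $e_u := u_h - \Pi_1 u \in \HH$ and $e_v := v_h - \Pi_2 v \in \HH$, so that $u_h - u = e_u - \rho_u$ and $v_h - v = e_v - \rho_v$. Subtracting \eqref{eq:weak_form_sg_space_c} from \eqref{eq:weak_form_sg_space_d} and inserting these splittings, the second equation gives, for all $z_h \in \HH$,
\begin{align*}
  \ltwoproduct{\dot e_v}{z_h} = \ltwoproduct{\dot\rho_v}{z_h} - \ltwoproduct{\nabla e_u}{\nabla z_h} + \ltwoproduct{\nabla\rho_u}{\nabla z_h} - \dualproduct{g(\tracejump{v_h}) - g(\tracejump{v})}{\tracejump{z_h}} + \dualproduct{\normaljump{u}}{\gamma^- z_h}.
\end{align*}
The crucial point is that the defining property of $\Pi_1$ (Definition~\ref{def:ritz_projector}) yields $\ltwoproduct{\nabla\rho_u}{\nabla z_h} = -\alpha\ltwoproduct{\rho_u}{z_h} - \dualproduct{\normaljump{u}}{\gamma^- z_h}$, so the two pairings $\dualproduct{\normaljump{u}}{\gamma^- z_h}$ cancel exactly — this is precisely what the boundary term in $\Pi_1$ was designed for. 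For the first equation I would use that both weak forms hold for all $w_h \in BL^1$, whence $\dot u = v$ and $\dot u_h = v_h$ identically, so $\dot e_u = v_h - \Pi_1 v = e_v - \rho_v + \dot\rho_u$.

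Next I would test the reduced $e_v$-equation with $z_h = e_v$ and compute $\tfrac{d}{dt}\tfrac12\ltwonorm{\nabla e_u}^2 = \Re\ltwoproduct{\nabla\dot e_u}{\nabla e_u}$ from the identity for $\dot e_u$. Adding the two, the symmetric products $\Re\ltwoproduct{\nabla e_v}{\nabla e_u}$ cancel, leaving
\begin{align*}
  \frac{d}{dt}\Big(\tfrac12\ltwonorm{\nabla e_u}^2 + \tfrac12\ltwonorm{e_v}^2\Big) = \Re\ltwoproduct{\nabla\dot\rho_u}{\nabla e_u} - \Re\ltwoproduct{\nabla\rho_v}{\nabla e_u} + \Re\ltwoproduct{\dot\rho_v}{e_v} - \alpha\Re\ltwoproduct{\rho_u}{e_v} - \Re\dualproduct{g(\tracejump{v_h}) - g(\tracejump{v})}{\tracejump{e_v}}.
\end{align*}
I then split $g(\tracejump{v_h}) - g(\tracejump{v}) = \big(g(\tracejump{v_h}) - g(\tracejump{\Pi_2 v})\big) - \theta$; since $\tracejump{e_v} = \tracejump{v_h} - \tracejump{\Pi_2 v}$, the strict monotonicity of Assumption~\ref{assumption:nonlinearity_g}(\ref{assumption:g_is_striclty_monotone}) bounds the first piece below by $\beta\ltwonorm[\Gamma]{\tracejump{e_v}}^2$, which I keep on the left-hand side, while the $\theta$-piece is treated by Young's inequality to produce $\tfrac{1}{2\beta}\ltwonorm[\Gamma]{\theta}^2$. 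The remaining four products are estimated by Cauchy--Schwarz and Young with parameters proportional to $1/T$: taking the supremum over $[0,T]$ lets the resulting $\varepsilon(\ltwonorm{\nabla e_u}^2+\ltwonorm{e_v}^2)$ be absorbed into $\sup_{[0,T]}$ of the energy, at the price of an explicit factor $T$ in front of $\int_0^t(\norm{\dot\rho}_{\tdltwo}^2 + \norm{\rho}_{L^2(\R^d\setminus\Gamma)\times\Hpglobal{1}}^2)$, with an implied constant depending only on the ellipticity constant $\alpha$ and no Gronwall exponential.

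It remains to return to the true errors. From $v_h - v = e_v - \rho_v$ and $\nabla(u_h - u) = \nabla e_u - \nabla\rho_u$ I bound $\ltwonorm{v_h-v}^2 \lesssim \ltwonorm{e_v}^2 + \ltwonorm{\rho_v}^2$ and $\ltwonorm{\nabla(u_h-u)}^2 \lesssim \ltwonorm{\nabla e_u}^2 + \ltwonorm{\nabla\rho_u}^2$; the pointwise projection errors $\rho_v(t)$ and $\nabla\rho_u(t)$ are rewritten via the fundamental theorem of calculus (using the homogeneous initial data $u(0)=v(0)=0$, so $\rho(0)=0$) as $\lesssim T\int_0^t\norm{\dot\rho}_{\tdltwo}^2$ and thereby absorbed, while $\beta\int_0^t\ltwonorm[\Gamma]{\tracejump{(v-v_h)}}^2$ follows from the retained $\beta\int_0^t\ltwonorm[\Gamma]{\tracejump{e_v}}^2$ together with the trace estimate $\ltwonorm[\Gamma]{\tracejump{\rho_v}}\lesssim\norm{\rho_v}_{\Hpglobal{1}}$; this gives \eqref{eq:err_est_expl_projections}. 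For the final convergence claim I would show each right-hand term vanishes as $h\to0$: the initial term is zero for the homogeneous scattering data; $\rho_u,\rho_v$ and $\dot\rho_u,\dot\rho_v$ tend to $0$ pointwise in $t$ by the strong convergence of $\Pi_2$ (Lemma~\ref{lemma:ltwo_stable_projection}(iii), no regularity needed) and of $\Pi_1$, using $\dot u=v\in L^\infty(H^1)$ and $\ddot u\in L^\infty(L^2)$ from Theorem~\ref{thm:semigroup}, with time-uniform dominating functions supplied by the $L^2$- and $H^1$-stability of the projections, so dominated convergence removes the time integrals; and for $\theta$, the $H^1$-convergence $\Pi_2 v\to v$ gives $\tracejump{\Pi_2 v}\to\tracejump{v}$ in $H^{1/2}(\Gamma)$, whence continuity of $g:H^{1/2}(\Gamma)\to L^2(\Gamma)$ (Lemma~\ref{lemma:approx_nonlinearity}(\ref{it:approx_nonlinearity_convergence_strong})) forces $\ltwonorm[\Gamma]{\theta}\to0$.

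The step I expect to be most delicate is giving rigorous meaning to $\dot\rho$, and in particular to $\dot\rho_u = v - \Pi_1 v$: because $\laplace v\in L^2$ is not guaranteed, $v$ need not lie in $\domain(\Pi_1)=\HpLglobal{1}$, so one must justify differentiating $\Pi_1 u$ in time and verify that $\norm{\dot\rho}_{\tdltwo}\in L^2(0,T)$ with $\norm{\dot\rho(\tau)}_{\tdltwo}\to0$. Securing this — via the $C^{1,1}$-in-time regularity of Theorem~\ref{thm:semigroup} together with the stability and density properties of the discrete spaces — is the crux; once it is in place, the consistency-term cancellation, the monotonicity estimate, and the supremum/absorption argument are structurally routine.
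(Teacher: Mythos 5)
Your proposal follows essentially the same route as the paper's proof: the same splitting $e=(u_h-\Pi_1 u,\,v_h-\Pi_2 v)$, the same cancellation of $\dualproduct{\normaljump{u}}{\gamma^- z_h}$ via the boundary term built into the Ritz projector, the same use of strict monotonicity to extract the $\beta\ltwonorm[\Gamma]{\tracejump{e_v}}^2$ term with $\theta$ handled by Young's inequality, and the same continuity arguments for the $h\to 0$ limit. The only deviations are cosmetic (absorption via a supremum with Young parameters $\sim 1/T$ in place of the paper's Gronwall step, and an explicit fundamental-theorem-of-calculus treatment of $\norm{\rho(t)}_{\mathcal{X}}$ that the paper leaves implicit), and your closing remark correctly flags a genuine subtlety about defining $\dot\rho_u=v-\Pi_1 v$ that the paper's own proof also glosses over.
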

\begin{proof}
The additional error function 
\begin{align*}
  e(t):=\colvec{e_u(t) \\ e_v(t)}:=\colvec{u_h - \Pi_1 u \\v_h - \Pi_2 v}
\end{align*}
 solves 
\begin{align*}
  \ltwoproduct{\nabla \dot{e}_u }{\nabla w_h} &= \ltwoproduct{\nabla e_v}{\nabla w_h} 
  -\ltwoproduct{\nabla \rho_v}{\nabla w_h} + \ltwoproduct{\nabla \dot{\rho}_u}{\nabla w_h}\\
  \ltwoproduct{\dot{e}_v}{z_h}&
                                \begin{multlined}[t]=-\ltwoproduct{\nabla e_u}{z_h} 
                                  - \dualproduct{g(\tracejump{v_h})-g(\tracejump{\Pi_2 v})}{\tracejump{z_h}}  \\
                                  +\ltwoproduct{\dot{\rho}_v}{z_h} + \dualproduct{\theta}{\tracejump{z_h}}
                                  + \alpha \ltwoproduct{\rho_{u}}{z_h},\end{multlined}
\end{align*}
for all $w_h,z_h \in \HH$.
From the strict monotonicity of $g$, we obtain by testing with $e$
\begin{align*}
  \frac{1}{2} \frac{d}{dt}\norm{e}^2_{\mathcal{X}}
  + \beta\norm{\tracejump{e_v}}_{L^2(\Gamma)}^2
  &\leq   
  \ltwonorm{\nabla  \rho_v}\ltwonorm{\nabla e_u} + \ltwonorm{\nabla  \dot{\rho}_u}\ltwonorm{\nabla e_u} \\
  &+\ltwonorm{\dot{\rho}_v}\ltwonorm{e_v} 
    + \ltwonorm[\Gamma]{\theta}\ltwonorm[\Gamma]{\tracejump{e_v}} \\
  &+ \alpha \ltwonorm{\rho_{u}}\ltwonorm{e_v}.   
\end{align*}
Young's inequality and integrating then gives
\begin{align*}
  \norm{e(t)}_{\mathcal{X}}^2
  + \beta\int_{0}^{t}{\norm{\tracejump{e_v}(\tau)}_{\Gamma}^2 \,d\tau}
  &\begin{multlined}[t]
    \lesssim  \norm{e(0)}_{\mathcal{X}}^2
    + T \int_{0}^{t}{\norm{\dot{\rho}(\tau)}_{\mathcal{X}}^2 \;d\tau}
    + \max{(1,\alpha^2)}\,T\, \int_{0}^{T}{\norm{\rho(\tau)}^2_{L^2(\R^d \setminus \Gamma)\times \Hpglobal{1}} \;d\tau} \\
  +T \beta^{-1} \int_{0}^{t}{\ltwonorm[\Gamma]{\theta(\tau)}^2 \;d\tau}
    + \frac{\beta}{T} \int_{0}^{t}{\ltwonorm[\Gamma]{\tracejump{e_v(\tau)}}^2 \; 
    + \norm{e(\tau)}^2_{\mathcal{X}} \;d\tau}.
  \end{multlined}
\end{align*}
By Gronwall's inequality,  with $\alpha$ dependence absorbed into the generic constant,
\begin{align*}
    \norm{e(t)}_{\mathcal{X}}^2
  + \beta\int_{0}^{t}{\norm{\tracejump{e_v}(\tau)}_{\Gamma}^2 \,d\tau}
   &\lesssim \norm{e(0)}_{\mathcal{X}}^2
      + T \int_{0}^{t}{\norm{\dot{\rho}(\tau)}^2_{\mathcal{X}} 
  +  \norm{\rho(\tau)}^2_{L^2(\R^d \setminus \Gamma) \times \Hpglobal{1}}
    + \beta^{-1} \ltwonorm[\Gamma]{\theta(\tau)}^2 \;d\tau} .
\end{align*}
By the triangle inequality $\norm{u - u_h}_{\mathcal{X}} \leq \norm{e}_{\mathcal{X}} + \norm{\rho}_{\mathcal{X}}$
this gives \eqref{eq:err_est_expl_projections}. In order to see convergence, we need to investigate
the different error contributions. Since we have $u,v \in L^{\infty}\left((0,T),H^{1}(\Omega)\right)$, we
get convergence of $\norm{\rho}_{\mathcal{X}} \to 0$.
We have  $\dot{v} \in L^{\infty}\left(0,T, L^2(\R^d \setminus \Gamma)\right)$ which implies convergence of $\dot{\rho}_v$,
  since we chose $\Pi_2$ as the projector from Lemma \ref{lemma:ltwo_stable_projection}. 
  Since $\dot{u} \in H^1(\R^d \setminus \Gamma)$, we also have $\dot{\rho}_u \to 0$ for $h \to 0$.
This means, as long as the nonlinear term converges, we obtain convergence of the fully discrete scheme.
\end{proof}

\begin{remark}
  It might seem advantageous to use Ritz projector $\Pi_1$ throughout the proof of
  Theorem~\ref{thm:full_convergence_low_regularity} as this choice eliminates the term $\norm{\nabla \rho_v}_{L^2(\R^d \setminus \Gamma)}$,
  but the Ritz projector is not defined for $\dot{v} \in L^2(\R^d)$. Thus we have to either assume
  additional regularity or use the projector $\Pi_2$. \eremk
\end{remark}

\subsection{Time discretization analysis}
\label{sect:analysis_td}
In order to estimate the full error due to the discretization of the boundary integral equation, we
  are interested in approximating the semigroup solution $u$ via a multistep method. This
problem has been studied in the literature and the following proposition gives a
summary of the results we will need.
\begin{proposition}
\label{prop:semigroup_approx}
  Let $\AA$ be a maximally monotone operator on a separable Hilbert
  space $H$ with domain $\domain(A) \subseteq H$.
  
  Let $u$ denote the solution from Proposition \ref{prop:komura_kato} and
  define the approximation sequence $u^n_{\Delta t}$ by
  \begin{align} 
    \label{eq:def_general_semigroup_approx}
    \frac{1}{\Delta t}\sum_{j=0}^k{\alpha_j u_{\Delta t}^{n-j}} &= \AA u_{\Delta t}^{n},
  \end{align}
  where we assumed $u^{j}_{\Delta t}=u(j\Delta t)$ for $j =  0,\dots k$.
  The $\alpha_j$ originate from the implicit Euler or the BDF2 method.

  Then $u_{\Delta t}^n$ is well-defined, 
  i.e.~\eqref{eq:def_general_semigroup_approx} has a unique sequence of solutions,
  with $u_{\Delta t}^n \in \domain(\AA)$.
  If $u_{\Delta t}^0,\dots,u^k_{\Delta t} \in \domain(\AA)$, then the following estimate holds for $N \Delta t \leq T$:
  \begin{align*}
    \max_{n=0,\dots,N}{\norm{u(t_n) - u_{\Delta t}^n}}
    &\leq C \norm{ \AA u_0} \left[ \Delta t + T^{1/2} (\Delta t)^{1/2}
      + \left(T+T^{1/2}\right)(\Delta t)^{1/3}\right].
  \end{align*}
  Assume that $u \in C^{p+1}([0,T],H)$, where $p$ is the order of the multistep method. Then
  \begin{align*}
    \max_{n=0,\dots,N}{\norm{u(t_n) - u_{\Delta t}^n}}
    &\leq C T \Delta t^{p}.
  \end{align*}
\end{proposition}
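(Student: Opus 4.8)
The plan is to combine the algebraic ($G$-)stability of the schemes (Proposition~\ref{prop:Gstability}) with the dissipativity of $\AA$ in order to reduce the convergence statement to the estimation of accumulated consistency defects, and then to treat the two regularity regimes by the \emph{same} energy identity but with different bounds on those defects.

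\emph{Well-posedness.} Each step of \eqref{eq:def_general_semigroup_approx} is equivalent to solving $\big(\tfrac{\alpha_0}{\Delta t} I - \AA\big) u_{\Delta t}^n = F^n$, where $F^n$ collects the known history terms and $\alpha_0 > 0$ for both BDF1 ($\alpha_0 = 1$) and BDF2 ($\alpha_0 = 3/2$). Since $\AA$ is maximally monotone, so is $\lambda^{-1}\AA$ for every $\lambda > 0$; hence $\operatorname{range}(\lambda I - \AA) = H$, and from the strict estimate $\big((\lambda I - \AA)x - (\lambda I - \AA)y, x-y\big) \geq \lambda \norm{x-y}^2$ the solution exists, is unique, and lies in $\domain(\AA)$. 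Induction on $n$ produces the whole sequence with $u_{\Delta t}^n \in \domain(\AA)$.

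\emph{Energy estimate.} Writing $e^n := u(t_n) - u_{\Delta t}^n$ and defining the defect $d^n$ by $\frac{1}{\Delta t}\sum_{j=0}^k \alpha_j u(t_{n-j}) = \AA u(t_n) + d^n$ (legitimate since $u(t)\in\domain(\AA)$ and $\dot u = \AA u$), subtraction gives $\frac{1}{\Delta t}\sum_{j=0}^k \alpha_j e^{n-j} = \AA u(t_n) - \AA u_{\Delta t}^n + d^n$. Testing with $e^n$, invoking $G$-stability (Proposition~\ref{prop:Gstability}) on the left and the dissipativity $\Re\big(\AA u(t_n) - \AA u_{\Delta t}^n, e^n\big) \leq 0$ on the right, yields the telescoping inequality $\norm{E^n}_G^2 - \norm{E^{n-1}}_G^2 \leq \Delta t\,\Re(d^n, e^n)$. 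Because $e^j = 0$ for $j \leq k$, summation together with the norm equivalence $\norm{e^n} \leq c\,\norm{E^n}_G$ gives $\norm{E^N}_G^2 \leq c\,\Delta t \sum_{n} \norm{d^n}\,\norm{E^n}_G$, and the elementary discrete comparison lemma (if $a_N^2 \leq \sum_n b_n a_n$ then $a_N \leq \sum_n b_n$) produces $\max_n \norm{e^n} \lesssim \Delta t \sum_n \norm{d^n}$. Crucially, no Gronwall factor $e^{CT}$ appears, which is what allows the \emph{polynomial} dependence on $T$ in the claimed bounds. In the smooth case $u \in C^{p+1}([0,T],H)$ a Peano-kernel (Taylor) expansion gives $\norm{d^n} \leq C\,\Delta t^{p}\,\max_{[0,T]}\norm{u^{(p+1)}}$, whence $\Delta t\sum_{n=1}^N \norm{d^n} \leq C\,T\,\Delta t^{p}$, which is the second displayed estimate.

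\emph{Low regularity (the main obstacle).} Here only $\norm{\dot u}_{L^\infty((0,\infty);H)} \leq \norm{\AA u_0}$ and Lipschitz continuity of $u$ are available from Proposition~\ref{prop:komura_kato}, so $\dot u = \AA u$ cannot be differentiated and the Taylor estimate fails; the crude bound $\norm{d^n} \lesssim \norm{\AA u_0}$ only yields the non-convergent $T\norm{\AA u_0}$. The device I would use is temporal mollification $u \mapsto u_\delta$ at scale $\delta$, for which the Lipschitz bound gives $\norm{u - u_\delta}_{L^\infty} \lesssim \delta\,\norm{\AA u_0}$ while each extra derivative costs a factor $\delta^{-1}$, so $\norm{u_\delta^{(p+1)}}_{L^\infty} \lesssim \delta^{-p}\norm{\AA u_0}$. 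Splitting $d^n = d_\delta^n + (d^n - d_\delta^n)$, the smooth part contributes $\Delta t\sum_n\norm{d_\delta^n} \lesssim T\,\Delta t^{p}\delta^{-p}\norm{\AA u_0}$, and balancing this against the $O(\delta)$ regularization error by choosing $\delta$ as a suitable power of $\Delta t$ produces the fractional rates $T^{1/2}\Delta t^{1/2}$ and $(T+T^{1/2})\Delta t^{1/3}$. The genuinely delicate point, which I expect to be the hard part, is the contribution of the non-smooth remainder $d^n - d_\delta^n$: its per-step size is only $O(1)$, so it must be handled \emph{inside} the sum $\Delta t\sum_n \Re(d^n - d_\delta^n, e^n)$ by Abel summation, rewriting the multistep combination of the remainder as a combination of first differences (e.g. $\tfrac32 r(t_n) - 2r(t_{n-1}) + \tfrac12 r(t_{n-2}) = \tfrac32\big(r(t_n)-r(t_{n-1})\big) - \tfrac12\big(r(t_{n-1})-r(t_{n-2})\big)$ for BDF2) so that the large defects are paired against the small increments $e^n - e^{n-1}$ rather than against $e^n$ itself. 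Controlling the resulting boundary and variation terms, and optimizing $\delta$, is where the exact exponents $\tfrac12$, $\tfrac13$ and the polynomial powers of $T$ are pinned down.
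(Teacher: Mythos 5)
Your route is necessarily different from the paper's, because the paper does not prove this proposition at all: it cites Nevanlinna \cite{nevanlinna} for the low-regularity $O(\Delta t^{1/3})$ bound and Hairer--Wanner for the smooth case, remarking only that the latter ``follows directly by inserting the consistency error into the stability theorem.'' Your reconstruction of that stability theorem is sound: maximal monotonicity gives $\operatorname{range}(\lambda I-\AA)=H$ for every $\lambda>0$ (Minty), so each implicit step is uniquely solvable in $\domain(\AA)$; testing the error equation with $e^n$, using $G$-stability on the left and dissipativity to discard the $\AA$-difference on the right, telescoping, and applying the discrete comparison lemma gives $\max_n\norm{e^n}\lesssim \Delta t\sum_n\norm{d^n}$ with no Gronwall factor. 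Combined with the Peano-kernel bound $\norm{d^n}\lesssim\Delta t^p\max\norm{u^{(p+1)}}$ this correctly yields the second estimate $CT\Delta t^p$, and is exactly the argument the paper alludes to.

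The genuine gap is the first estimate, which is the substantive content of the proposition (it is what the unconditional convergence results of Sections 5.3--5.4 rest on). For a Komura--Kato solution one only has $\norm{\dot u}_{L^\infty}\le\norm{\AA u_0}$, and you correctly observe that the naive defect bound gives the non-convergent $T\norm{\AA u_0}$. But your repair --- mollify at scale $\delta$, split $d^n=d^n_\delta+(d^n-d^n_\delta)$, and handle the rough part by Abel summation against increments of $e^n$ --- is left as a program rather than a proof. As you yourself note, a direct estimate of the rough part gives $\Delta t\sum_n\norm{d^n-d^n_\delta}\lesssim (T\delta/\Delta t+T)\norm{\AA u_0}$, which does not tend to zero for any choice of $\delta$, so \emph{everything} hinges on the summation-by-parts step you do not carry out: one must show that pairing the $O(1)$ remainders against the increments $e^n-e^{n-1}$ (whose size must itself be controlled, e.g.\ via a discrete bound $\norm{e^n-e^{n-1}}\lesssim\Delta t\,\norm{\AA u_0}$ requiring a separate a priori estimate on $\norm{\AA u^n_{\Delta t}}$) produces a quantity that, after optimizing $\delta$, gives precisely $T^{1/2}\Delta t^{1/2}+(T+T^{1/2})\Delta t^{1/3}$. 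Neither the required bound on the increments nor the balancing that pins down the exponents $1/2$ and $1/3$ and the powers of $T$ is supplied, so the first displayed estimate is not established by your argument; it would still have to be imported from \cite{nevanlinna}, as the paper does.
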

\begin{proof}
  The general convergence result was shown by Nevanlinna in
  \cite[Corollary 1]{nevanlinna}. The improved convergence rate is shown in 
  \cite[Chapter V.8, Theorem 8.2]{hairer_wanner_2} or follows directly by inserting the consistency error into
  the stability theorem \cite[Theorem 1]{nevanlinna}.
\end{proof}

\begin{remark}
  We will use a shifted version of the previous proposition, where we assume 
  $u^{-j}_{\Delta t}=u(-j\Delta t)$ for $j \in \N$ and define all $u^n_{\Delta t}$ via \eqref{eq:def_general_semigroup_approx}
  for all $n \in \N$. This does not impact the stated results. \eremk
\end{remark}

Now that we have specified the semigroup setting, we can
write down the multistep approximation sequence ${(\usg,\vsg) \subseteq \domain(\AA)}$ of
\eqref{eq:wave_eqn} via
\begin{subequations}
\label{eq:def_usg_vsg}
\begin{align*}
  \frac{1}{\Delta t} \sum_{j=0}^k{\alpha_j \usg^{n-j}} &=  \vsg^{n}, \\
  \frac{1}{\Delta t} \sum_{j=0}^k{\alpha_j \vsg^{n-j}} &=  \laplace \usg^{n},
\end{align*}
\end{subequations}
together with the initial conditions $\usg^j=u^{inc}(j\Delta t)$, $\vsg^j=\dot{u}^{inc}(j \Delta t)$ for
$j<0$; see Proposition~\ref{prop:semigroup_approx}.

Comparing this definition to (\ref{eq:diff_eq_u_ie}), we see that due to the way we dealt with $u^{inc}$, the approximation of the semigroup does not coincide with
the approximation induced by the boundary integral equations.
The following lemma shows that this error does not compromise the convergence rate.
\begin{lemma}
  \label{lemma:comp_uie_usg}
  Let $p > 0$ denote the order of the multistep method, and assume
  \begin{align*}
    \left(u_{inc},\dot{u}_{inc}\right) \in C^{\alpha}\left((0,T), \mathcal{X}\right)
  \end{align*}
  for $\alpha > 1$.
  We consider the shifted version of $\uie$, defined via 
  $\uiet:=\uie + \Pi_1 u^{inc}$
  and $\viet:=\dd \uie + \Pi_0\dot{u}^{inc}$;
  $\Pi_0$ and $\Pi_1$ are defined as in Section~\ref{sect:approx_in_hh}.
  Then the following error estimate holds:
  \begin{align*}
    \left(\ltwonorm{ \vsg^n -\viet^n }^2 + 
    \ltwonorm{\nabla \usg - \nabla \uiet}^2\right)^{1/2}
    \leq& C T \left(\Delta t\right)^{\min\left(p,\alpha-1\right)} 
    \norm{\left(u^{inc},\dot{u}^{inc}\right)}_{C^\alpha\left((0,T), \mathcal{X}\right)} \\
    &+C \Delta t 
    \sum_{j=0}^{n}{{\norm{ \left(I - \bdryinterpY\right)\gamma^+\dot{u}^{inc}(t_n)  }_{H^{1/2}(\Gamma)}}} \\
    &+C \Delta t \sum_{j=0}^{n}{\inf_{x_h \in X_h}{\norm{ \partial_n^+ \dot{u}^{inc}(t_n) - x_h }_{H^{-1/2}(\Gamma)}}} \\
    &+ C \Delta t
      \sum_{j=0}^{n}{\norm{ \left(I-\bdryinterpY\right) \gamma^+  \ddot{u}^{inc}(t_n) }_{H^{1/2}(\Gamma)}}.      
  \end{align*}
The same convergence rates hold  if we use $u^{inc}$, $\dot{u}^{inc}$ instead of the projected versions on the
  left hand side. Thus in practice we do not depend on the
  non-computable operators $\Pi_0$ and $\Pi_1$.
\end{lemma}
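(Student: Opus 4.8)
The plan is to derive a recursion for the shifted error and close it with a discrete energy estimate resting on $G$-stability and the monotonicity of $g$, mirroring the time-continuous argument of Theorem~\ref{thm:full_convergence_low_regularity}. Set $E_u^n:=\usg^n-\uiet^n$ and $E_v^n:=\vsg^n-\viet^n$. Both pairs satisfy the same interior/exterior wave recursion (compare \eqref{eq:def_usg_vsg} with \eqref{eq:diff_eq_u_ie}), so subtracting the weak form of $(\usg,\vsg)$ (the time discretization of \eqref{eq:weak_form_sg_space_d}) from that of $(\uie,\vie)$ in \eqref{eq:diff_eq_u_ie} leaves only boundary and incident-field contributions. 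The observation that makes the nonlinearities line up is Lemma~\ref{lemma:prop_of_pi_0}~(\ref{it:pi_0_trace_relation}): since $\tracejump{\Pi_0\dot{u}^{inc}}=\bdryinterpY\gamma^+\dot{u}^{inc}$, the argument $\tracejump{\vie^n}+\bdryinterpY\dot{u}^{inc}(t_n)$ appearing in \eqref{eq:diff_eq_u_ie} equals $\tracejump{\viet^n}$. Thus, for all $z_h\in\HH$,
\begin{align*}
  \dd E_u^n &= E_v^n-\delta_1^n,\\
  \ltwoproduct{\dd E_v^n}{z_h} &= -\ltwoproduct{\nabla E_u^n}{\nabla z_h}
     -\dualproduct{g(\tracejump{\vsg^n})-g(\tracejump{\viet^n})}{\tracejump{z_h}}-R^n(z_h),
\end{align*}
where $\delta_1^n:=\Pi_1(\dd u^{inc})^n-\Pi_0\dot{u}^{inc}(t_n)$ and $R^n(z_h):=\ltwoproduct{\nabla\Pi_1 u^{inc}(t_n)}{\nabla z_h}+\dualproduct{\partial_n^+ u^{inc}(t_n)}{\tracejump{z_h}}+\ltwoproduct{\Pi_0(\dd\dot{u}^{inc})^n}{z_h}$.

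The core of the argument is to show that $\delta_1^n$ and $R^n$ consist only of multistep consistency errors and projection errors. Here I would exploit that $u^{inc}$ solves the wave equation exactly, so $\laplace u^{inc}=\ddot{u}^{inc}$ in $\Omega^+$ and $u^{inc}\equiv 0$ in $\Omega^-$. Expanding $\ltwoproduct{\nabla\Pi_1 u^{inc}}{\nabla z_h}$ with the defining variational identity of the Ritz projector (Definition~\ref{def:ritz_projector}), then integrating $\ltwoproduct{\nabla u^{inc}}{\nabla z_h}$ by parts and using $\normaljump{u^{inc}}=\partial_n^+ u^{inc}$, one checks that every pairing of $\partial_n^+ u^{inc}$ against $\gamma^\pm z_h$ cancels, leaving
\begin{align*}
  R^n(z_h)=-\ltwoproduct{(I-\Pi_0)\ddot{u}^{inc}(t_n)}{z_h}
  +\ltwoproduct{\Pi_0\big[(\dd\dot{u}^{inc})^n-\ddot{u}^{inc}(t_n)\big]}{z_h}
  +\ltwoproduct{(I-\Pi_1)u^{inc}(t_n)}{z_h}
\end{align*}
up to the stabilization constant of $\Pi_1$. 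The first term is bounded by $\norm{(I-\bdryinterpY)\gamma^+\ddot{u}^{inc}(t_n)}_{H^{1/2}(\Gamma)}$ through Lemma~\ref{lemma:prop_of_pi_0}~(iv); the second is the multistep consistency error of $\dot{u}^{inc}$, of size $(\Delta t)^{\min(p,\alpha-1)}$ under the assumed $C^\alpha$-regularity; the last is controlled by the $\Pi_1$ approximation estimate, reducing to boundary-approximation quantities of the same type. For $\delta_1^n$ I would split $\delta_1^n=\Pi_1\big[(\dd u^{inc})^n-\dot{u}^{inc}(t_n)\big]+(\Pi_1-\Pi_0)\dot{u}^{inc}(t_n)$; the first summand is again a consistency error, and the $BL^1$-norm of the second is dominated by $\norm{(I-\bdryinterpY)\gamma^+\dot{u}^{inc}(t_n)}_{H^{1/2}(\Gamma)}$ together with $\inf_{x_h\in X_h}\norm{\partial_n^+\dot{u}^{inc}(t_n)-x_h}_{H^{-1/2}(\Gamma)}$, the latter infimum arising because $\gamma^- z_h\in X_h^\circ$ permits subtracting any $x_h\in X_h$. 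This reduction is the step I expect to be the main obstacle: it requires arranging the exact cancellation of the leading wave-equation and normal-trace terms and retaining the sharp $H^{-1/2}$ pairing rather than a coarser $H^{1/2}$ bound.

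With the residuals understood, I would test the second error equation with $z_h=E_v^n$ and substitute the first. The volume terms $\ltwoproduct{\nabla E_v^n}{\nabla E_u^n}$ and $-\ltwoproduct{\nabla E_u^n}{\nabla E_v^n}$ have vanishing real part, and by the monotonicity in Assumption~\ref{assumption:nonlinearity_g} the nonlinear pairing satisfies $\Re\dualproduct{g(\tracejump{\vsg^n})-g(\tracejump{\viet^n})}{\tracejump{E_v^n}}\ge 0$, so it drops from the upper bound. Applying $G$-stability (Proposition~\ref{prop:Gstability}) in the space $\tdltwo=BL^1\times L^2(\R^d)$ and using that the proposition is phrased for $\sum_j\alpha_j(\cdot)=\Delta t\,\dd(\cdot)$, the telescoped sum carries the factor $\Delta t$:
\begin{align*}
  \norm{E^N}_G^2 \lesssim \Delta t\sum_{n=1}^{N}\Big(\norm{\delta_1^n}_{BL^1}+\norm{R^n}_\star\Big)\norm{E^n}_{\tdltwo},
\end{align*}
where $\norm{R^n}_\star$ is the relevant dual norm of $R^n$ on $\HH$. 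The initial error vanishes because $u^{inc}$ is supported away from $\Gamma$ for $t\le 0$, so $\Pi_0,\Pi_1$ reproduce it there. A discrete Gronwall inequality, together with the norm equivalence $\norm{\cdot}_G\sim\norm{\cdot}_{\tdltwo}$, then yields $\norm{E^N}_{\tdltwo}\lesssim \Delta t\sum_{n}\big(\norm{\delta_1^n}_{BL^1}+\norm{R^n}_\star\big)$; inserting the bounds of the previous paragraph produces the four summands of the claimed estimate, the sums $\Delta t\sum_n$ being the Riemann-sum realization of the time integrals. Finally, the last sentence follows from the triangle inequality $\norm{\vsg^n-(\vie^n+\dot{u}^{inc}(t_n))}\le\norm{E_v^n}+\norm{(I-\Pi_0)\dot{u}^{inc}(t_n)}$ and the analogous gradient bound, since $\norm{(I-\Pi_0)\dot{u}^{inc}}_{H^1(\Omega^+)}\lesssim\norm{(I-\bdryinterpY)\gamma^+\dot{u}^{inc}}_{H^{1/2}(\Gamma)}$ is already present on the right-hand side.
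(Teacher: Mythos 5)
Your argument is correct and identifies exactly the same defect terms as the paper --- your $\delta_1^n$ is the paper's $\varepsilon^n$, and the $L^2$ representation you obtain for $R^n$ after expanding the Ritz projector, integrating by parts and using $\laplace u^{inc}=\ddot u^{inc}$ is the paper's $\theta^n$ --- but you close the stability step by a different route. The paper simply observes that $(\uiet^n,\viet^n)\in\domain(\AA)$ and satisfies the multistep recursion for the maximally monotone operator $\AA$ up to the perturbations $(\varepsilon^n,\theta^n)$, and then cites Nevanlinna's perturbed stability theorem to obtain
$\bigl(\ltwonorm{\vsg^n-\viet^n}^2+\ltwonorm{\nabla\usg^n-\nabla\uiet^n}^2\bigr)^{1/2}\le\Delta t\sum_{j}\bigl(\ltwonorm{\theta^j}^2+\ltwonorm{\nabla\varepsilon^j}^2\bigr)^{1/2}$ in one stroke; you instead re-derive this bound by hand via $G$-stability (Proposition~\ref{prop:Gstability}), plain monotonicity of $g$ (note that strict monotonicity is indeed not needed here), and a discrete Gronwall argument, which is precisely the machinery the paper deploys only later in Lemma~\ref{lemma:smooth_convergence}. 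The black-box route is shorter; yours is self-contained and makes the role of the monotonicity and of the trace identity $\tracejump{\Pi_0\dot u^{inc}}=\bdryinterpY\gamma^+\dot u^{inc}$ explicit, and your observation that the initial errors vanish because $u^{inc}$ is supported away from $\Gamma$ for $t\le0$ (so that $\Pi_0,\Pi_1$ reproduce it) is a point the paper leaves implicit. One further difference is actually in your favour: you retain the term $\alpha\ltwoproduct{(I-\Pi_1)u^{inc}(t_n)}{z_h}$ arising from $\laplace\Pi_1 u^{inc}=\laplace u^{inc}-\alpha(u^{inc}-\Pi_1 u^{inc})$, whereas the paper suppresses it by asserting $\laplace\Pi_1 u=\laplace u$; this contribution is of the same approximation-error type as the stated summands but involves $u^{inc}$ rather than its time derivatives, so strictly speaking it adds a term not visible in the lemma's right-hand side --- a blemish of the statement itself rather than of your proof.
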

\begin{proof}
  Inserting the definition of $\uiet$ and $\viet$ into the multistep method, 
  using \eqref{eq:diff_eq_u_ie}, we see that $(\uiet,\viet)$ solves
  \begin{align*}
    \frac{1}{\Delta t}\sum_{j=0}^{k}{\alpha_j \uiet^{n-j}} &= \viet^{n} + \varepsilon^n, \\
    \frac{1}{\Delta t} \sum_{j=0}^{k}{\beta_j \viet^{n-j}} &= \laplace \uiet^{n} + \theta^n,
  \end{align*}
  with right-hand sides
  \begin{align*}
    \varepsilon^n&:=\frac{1}{\Delta t}
                   \sum_{j=0}^{k}\alpha_j \Pi_1 u^{inc}\left(t_{n-j}\right)
                   -\Pi_0 \dot{u}^{inc}(t_{n}),\\
    \theta^n&:=\frac{1}{\Delta t}
    \sum_{j=0}^{k}{\alpha_j \Pi_0 \dot{u}^{inc}\left(t_{n-j}\right)}
            -  \laplace \Pi_1 u^{inc}(t_{n}).
  \end{align*}
  We have shown that $\AA$ is maximally monotone and that from the properties
  of $\Pi_0$ and $\Pi_1$ we have $(\uiet^n,\viet^n) \in \domain(\AA)$.
  Thus, we can apply \cite[Theorem 1]{nevanlinna} to get for the difference $\uiet-\usg$
  \begin{align*}
    \left(\ltwonorm{  \vsg^n - \viet^n}^{2}
        + \ltwonorm{\nabla \uiet^n - \nabla \usg^n}^{2}\right)^{1/2}
    &\leq \Delta t \sum_{j=0}^{n}{\left(\ltwonorm{\theta^n}^{2} + \ltwonorm{\nabla \varepsilon^n}^{2}\right)^{1/2}}.
  \end{align*}
  It remains to estimate the error terms $\ltwonorm{\theta^n}$ and $\ltwonorm{\nabla \varepsilon^n}$.
  We start with $\varepsilon^n$ and rewrite it as
  \begin{align*}
    \varepsilon^n&=\Pi_1 \left( \frac{1}{\Delta t}\sum_{j=0}^{k}{\alpha_j  u^{inc}\left(t_{n-j}\right)
                 - \dot{u}^{inc}(t_{n})} \right)
                 +(\Pi_0 - \Pi_1)\left(  \dot{u}^{inc}(t_{n}) \right).
  \end{align*}
  For the norms, this gives due to the stability of $\Pi_1$ and the approximation properties of
  $\Pi_1$ and $\Pi_0$
  \begin{align*}
    \ltwonorm{\nabla \varepsilon^n}
    &\lesssim \Hpnorm{1}{ \frac{1}{\Delta t}\sum_{j=0}^{k}{\alpha_j  u^{inc}\left(t_{n-j}\right)
      - \dot{u}^{inc}(t_{n})}  }  
    +{\norm{ \left(I - \bdryinterpY\right)\dot{u}^{inc}(t_{n}) }_{H^{1/2}(\Gamma)}}
      +\inf_{x_h \in X_h}{\norm{ \partial_n^+ \dot{u}^{inc}(t_{n}) - x_h }_{H^{-1/2}(\Gamma)}}
  \end{align*}
  The first term is $\bigO\left(\Delta t^{{\min\left(p,\alpha-1\right)}} \right)$ as the consistency
  error of a $p$-th order multistep method. 

  A similar argument can be employed for $\tau_n$.
 Noticing that $\laplace \Pi_1 u= \laplace u$ and arranging the terms as above gives
  \begin{align*}
    \ltwonorm{\theta^n}
    &\lesssim \ltwonorm{ \frac{1}{\Delta t}\sum_{j=0}^{k}{\alpha_j  \dot{u}^{inc}\left(t_{n-j}\right)
      -  \laplace u^{inc}(t_{n})} } 
    + \ltwonorm{ (I -\Pi_0)  \laplace u^{inc}(t_{n})}.
  \end{align*}
  Since we assumed that $u^{inc}$ solves the wave equation, we have that
  \begin{align*}
    \ltwonorm{\theta^n}&\leq \bigO\left(\left(\Delta t\right)^{{\min\left(p,\alpha-1\right)}} \right) 
    +C \sum_{j=0}^{k}{\ltwonorm{ (I -\Pi_0)  \ddot{u}^{inc}(t_n)}},    
  \end{align*}
  which concludes the proof.
\end{proof}

The following convergence result with respect to the time-discretization now follows.
\begin{theorem}
\label{thm:time_discretization}
Assume for a moment that $X_h=H^{-1/2}$ and $Y_h=H^{1/2}(\Gamma)$.
The discrete solutions, obtained by $\uie:=S(\dd)\varphi + (\dd)^{-1}D(\dd) \psi$ converge to the
exact solution $u$ of \eqref{eq:ode_semigroup} with the rate
\begin{align*}
    \max_{n=0,\dots,N}{\norm{u(t_n) - \uie(t_n) - u^{\text{inc}}(t_n)}}
     &\lesssim T (\Delta t)^{1/3}.
\end{align*}
If we assume that the exact solution satisfies
    $(u,\dot{u}) \in C^{p+1}\left([0,T], BL^1 \times \ltwoglobal \right)$, then we regain the
full convergence rate of the multistep method
\begin{align*}
  \max_{n=0,\dots,N}{\norm{u(t_n) - \uie(t_n) - u^{\text{inc}}(t_n)}}
  &\lesssim T (\Delta t)^{p}.  
\end{align*}
For the fully discrete setting, the same rates in time hold, but with additional 
projection errors due to $u^{inc}$; see Lemma~\ref{lemma:comp_uie_usg}.
\end{theorem}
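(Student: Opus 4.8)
The plan is to estimate the error through a triangle-inequality split that inserts the multistep approximation $(\usg,\vsg)$ of the semigroup between the exact solution and the integral-equation approximation, measuring everything in the energy norm of $\mathcal{X}=BL^1\times\ltwoglobal$ (so that the pairs $(u,\dot u)$, $(\usg,\vsg)$ and $(\uie,\dd\uie)$ are compared, the displacement in the $L^2$-gradient norm and the velocity in $\ltwonorm{\,\cdot\,}$). Using the shifted sequence $\uiet=\uie+\Pi_1 u^{inc}$ from Lemma~\ref{lemma:comp_uie_usg}, the displacement error splits as
\begin{align*}
  u - \uie - u^{inc} = \left(u - \usg\right) + \left(\usg - \uiet\right) + \left(\Pi_1 - I\right)u^{inc},
\end{align*}
with the analogous identity for the velocity using $\viet=\dd\uie+\Pi_0\dot u^{inc}$. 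In the present setting $X_h=H^{-1/2}(\Gamma)$, $Y_h=H^{1/2}(\Gamma)$ the operator $\bdryinterpY$ is the identity and the projectors $\Pi_0,\Pi_1$ act as the identity on functions supported in $\Omega^+$ with vanishing interior trace; since $u^{inc}\equiv 0$ in $\Omega^-$ this applies to $u^{inc}$ and $\dot u^{inc}$, so the last terms $\left(\Pi_1-I\right)u^{inc}$ and $\left(\Pi_0-I\right)\dot u^{inc}$ vanish identically.

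First I would bound $u-\usg$ with Proposition~\ref{prop:semigroup_approx}. The pair $(\usg,\vsg)$ from \eqref{eq:def_usg_vsg} is exactly the multistep iteration \eqref{eq:def_general_semigroup_approx} applied to the maximally monotone operator $\AA$ of Theorem~\ref{thm:semigroup}, which for the full spaces generates the exact solution of \eqref{eq:ode_semigroup}. I would invoke the shifted variant from the remark following Proposition~\ref{prop:semigroup_approx}: since the incident wave has not reached the scatterer, $u(t)=u^{inc}(t)$ for $t\le0$, so the prescribed starting values $\usg^{-j}=u^{inc}(-j\Delta t)$ coincide with $u(-j\Delta t)$, and $u(0)=u^{inc}(0)\in\domain(\AA)$ because all boundary traces vanish near $\Gamma$. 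This yields $\max_n\norm{u(t_n)-\usg^n}\lesssim T(\Delta t)^{1/3}$ with no regularity assumption, and $\lesssim T(\Delta t)^{p}$ when $(u,\dot u)\in C^{p+1}([0,T],\mathcal{X})$.

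The term $\usg-\uiet$ is handled directly by Lemma~\ref{lemma:comp_uie_usg}, whose applicability rests on the equivalence of Lemma~\ref{lemma:diff_eq_u_ie} guaranteeing that $\uie=S(\dd)\varphi+(\dd)^{-1}D(\dd)\psi$ truly solves the differential system \eqref{eq:diff_eq_u_ie}. For the full spaces every projection-error term on the right-hand side of Lemma~\ref{lemma:comp_uie_usg}, namely those containing $(I-\bdryinterpY)$ and the $X_h$-infima, drops out, leaving only the consistency contribution $\bigO\!\left((\Delta t)^{\min(p,\alpha-1)}\right)$ determined by the temporal smoothness $C^{\alpha}$ of the data $(u^{inc},\dot u^{inc})$. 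Since $u^{inc}$ is a prescribed smooth incident field, $\alpha$ may be taken as large as needed, so this term is $\bigO\!\left((\Delta t)^{p}\right)$ and, because $p\ge1$, is dominated by the $(\Delta t)^{1/3}$ contribution of the first piece. Adding the bounds gives $T(\Delta t)^{1/3}$ in general and $T(\Delta t)^{p}$ under the stated $C^{p+1}$ regularity.

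The deep analytic input is entirely contained in Proposition~\ref{prop:semigroup_approx} (Nevanlinna's low-regularity rate for $A$-stable multistep discretisations of monotone evolutions), which I am free to use; the content specific to this theorem is the incident-wave bookkeeping and the verification that starting data, domains and norms match across the two discrete sequences. Consequently I expect the main obstacle to be this matching rather than any fresh estimate, the bridging being precisely the purpose of Lemma~\ref{lemma:comp_uie_usg}. For the fully discrete statement one repeats the identical split, but with $\AA$ built over the finite-dimensional $\HH$ and with the projection-error terms of Lemma~\ref{lemma:comp_uie_usg} retained; these are the spatial errors already quantified through $\Pi_0,\Pi_1$ in Section~\ref{sect:approx_in_hh} and combined with Theorem~\ref{thm:full_convergence_low_regularity}, so the temporal rates persist and only the advertised additional projection errors due to $u^{inc}$ enter.
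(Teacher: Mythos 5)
Your proposal is correct and is essentially the paper's own argument: the paper proves Theorem~\ref{thm:time_discretization} precisely by combining Proposition~\ref{prop:semigroup_approx} (for $u-\usg$) with Lemma~\ref{lemma:comp_uie_usg} (for $\usg-\uiet$), exactly the triangle-inequality split through the semigroup iterates that you describe. Your additional bookkeeping --- that $\bdryinterpY$, $\Pi_0$, $\Pi_1$ reduce to the identity on $u^{inc}$ for the full spaces, that the starting values match because $u(t)=u^{inc}(t)$ for $t\le 0$ with $u(0)\in\domain(\AA)$, and that the consistency term from the smooth incident wave is dominated by $(\Delta t)^{1/3}$ --- is all sound and merely makes explicit what the paper leaves implicit.
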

\begin{proof}
  This statement is easily obtained by combining Proposition \ref{prop:semigroup_approx} with
  Lemma \ref{lemma:comp_uie_usg}. 
\end{proof}

\subsection{Analysis of the fully discrete scheme}
\label{sect:analysis_fully_discrete}
In this section, we can now combine the ideas and results from the previous sections, to get convergence  results for the fully discrete approximation to
the semigroup, and therefore also to the approximation obtained by solving the boundary integral equations (\ref{eq:fully_discrete_int_eq}) and using the representation formula.

We start by combining the estimates from Sections~\ref{sect:sd_in_space} and ~\ref{sect:analysis_td},
which immediately give a convergence result for the full discretization:

\begin{corollary}
\label{thm:expl_convergence_statement_low_regularity}
  Assume that the incoming wave satisfies $(u^{\text{inc}},\dot{u}^{\text{inc}}) \in C^{\alpha}\left((0,T), \mathcal{X}\right)$ for $\alpha > 1$.
  Setting $\widetilde{u}_{ie}^{\Delta t}(t_n):=\uie^{\Delta t} + u^{\text{inc}}(t_n)$ and $\widetilde{v}_{ie}^{\Delta t}(t_n):=\vie^{\Delta t} + \dot{u}^{\text{inc}}(t_n)$
  the discretization error can be quantified by:
  \begin{align*}
    \norm{\widetilde{u}_{ie}^{\Delta t}(t_n)  - u(t_n)}_{BL^1} + \norm{\widetilde{v}_{ie}^{\Delta t}(t_n)  - \dot{u}(t_n)}_{L^2(\R^d)}
    \lesssim& \left(\norm{\mathcal{A} u(0)}_{\mathcal{X}}+\norm{(u,\dot{u})}_{C^{\alpha}((0,T),\mathcal{X})}\right)  T \left(\Delta t  \right)^{\min(\alpha-1,1/3)}   \\
    &+ \Delta t \sum_{j=0}^{n}{{\norm{ \left(I - \bdryinterpY\right)\gamma^+\dot{u}^{inc}(t_n)  }_{H^{1/2}(\Gamma)}}}  \\
    &+ \Delta t \sum_{j=0}^{n}{\inf_{x_h \in X_h}{\norm{ \partial_n^+ \dot{u}^{inc}(t_n) - x_h }_{H^{-1/2}(\Gamma)}}} \\    
    &+ \Delta t \sum_{j=0}^{n}{\norm{ \left(I-\bdryinterpY\right) \gamma^+  \ddot{u}^{inc}(t_n) }_{H^{1/2}(\Gamma)}}. \\              
    & + T \int_{0}^{t_n}{\norm{\dot{\rho}(\tau)}^2_{\mathcal{X} } 
   + \norm{\rho(\tau)}^2_{L^2(\R^d \setminus \Gamma)\times \Hpglobal{1}} 
     + \beta^{-1} \ltwonorm[\Gamma]{\theta(\tau)}^2 \;d\tau}.      
  \end{align*}
with the error terms
\begin{align*}
  \rho(t)&:=\colvec{\rho_u(t) \\ \rho_v(t)}:=\colvec{u - \Pi_1 u \\v - \Pi_2 v}, \\
  \theta(t)&:=g(\tracejump{v})-g(\tracejump{\Pi_2 v}).
\end{align*}
  Assuming $\abs{g(\mu)}\lesssim \left(1+\abs{\mu}^{\frac{d-1}{d-2}}\right)$ for
  $d\geq 3$, this this gives strong convergence
  \begin{align*}
    \uie^{\Delta t} + u^{inc} &\to u  \quad \text{in } L^{\infty}\left((0,T); BL^1\right), \\
    \dd \uie^{\Delta t} + \dot{u}^{inc} & \to \dot{u} \quad \text{in } L^{\infty}\left((0,T);L^2(\R^{d})\right),
  \end{align*}
  with a rate in time of $(\Delta t)^{1/3}$ and quasi-optimality in space.
\end{corollary}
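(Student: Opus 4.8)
The plan is to derive the estimate by a triangle inequality that cleanly separates the time-discretization error from the space-discretization error, using the space-semidiscrete semigroup solution $(u_h,v_h)$ of Theorem~\ref{thm:semigroup} (taken with the \emph{given} spaces $X_h,Y_h$, so that $\AA$ acts on $\HH$) as the intermediate object. Writing $E_n$ for the left-hand side norm in the statement, I would first split
\[
E_n \le \norm{\widetilde{u}_{ie}^{\Delta t}(t_n)-u_h(t_n)}_{BL^1}+\norm{\dd \uie^{\Delta t}(t_n)-\dot u_h(t_n)}_{L^2(\R^d)}\;+\;\norm{u_h(t_n)-u(t_n)}_{\mathcal X}.
\]
The last summand is precisely the space-semidiscretization error, so I would invoke Theorem~\ref{thm:full_convergence_low_regularity} verbatim (its hypothesis, the $L^2$-stable projection $\Pi_2$ of Lemma~\ref{lemma:ltwo_stable_projection}, is in force); this produces exactly the final line of the asserted bound, namely the integrals of $\norm{\dot\rho}_{\mathcal X}^2$, $\norm{\rho}^2$ and $\beta^{-1}\ltwonorm[\Gamma]{\theta}^2$.

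For the remaining (time-discretization) part I would proceed exactly as in the proof of Theorem~\ref{thm:time_discretization}, but now for the $\HH$-semigroup rather than the continuous one. The sequence $(\usg,\vsg)$ of \eqref{eq:def_usg_vsg} is the multistep approximation of the maximally monotone operator $\AA$ on $\HH$, so Proposition~\ref{prop:semigroup_approx} yields $\norm{\usg^n-u_h(t_n)}\lesssim T(\Delta t)^{1/3}$ in the low-regularity case and $T(\Delta t)^{p}$ under the extra smoothness. Then Lemma~\ref{lemma:comp_uie_usg} lets me pass from $\usg$ to the shifted integral-equation iterate $\widetilde{u}_{ie}^{\Delta t}$, at the cost of precisely the three incident-wave Riemann sums appearing in the corollary; this same lemma also reconciles the two different treatments of $u^{inc}$ (the semigroup uses $g(\tracejump v)$ on the total field, whereas the integral equation uses $g(\tracejump{\vie}+\bdryinterpY\dot u^{inc})$ on the scattered field) and absorbs the mismatch in the starting values. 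Adding these bounds to the space error via the triangle inequality gives the full estimate.

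The main obstacle is making the time-error constant uniform in $h$. Proposition~\ref{prop:semigroup_approx} measures the time error in terms of data of the \emph{discrete} semigroup — $\norm{\AA u_h(0)}$ in the low-regularity case, $\norm{(u_h,\dot u_h)}_{C^{p+1}([0,T],\mathcal X)}$ in the smooth case — whereas the corollary states it in terms of the \emph{continuous} data $\norm{\AA u(0)}_{\mathcal X}$ and $\norm{(u,\dot u)}_{C^\alpha((0,T),\mathcal X)}$. I would bridge this gap using the stability of the projectors $\Pi_0,\Pi_1,\Pi_2$ from Section~\ref{sect:approx_in_hh} together with the regularity of the semigroup established in Theorem~\ref{thm:semigroup}\,(\ref{it:sg_solution_prop_2})--(\ref{it:sg_solution_prop_4}), which bound the discrete solution's data by the continuous one independently of $h$; because the scattered field carries homogeneous initial data, this reduces to controlling the incident-wave contributions, which are exactly the quantities already isolated in Lemma~\ref{lemma:comp_uie_usg}.

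Finally, to read off the strong convergence at rate $(\Delta t)^{1/3}$ with quasi-optimality in space, I would let $h\to 0$ in the combined bound. The three incident-wave sums are Riemann sums of the projection defects $(I-\bdryinterpY)\gamma^+(\cdot)$ and of the best-approximation error in $X_h$, and vanish by the approximation properties of $\bdryinterpY$ and $X_h$ uniformly in $\Delta t$; the space integrals vanish because $\rho,\dot\rho\to 0$ by the $L^2$- and $H^1$-approximation properties of $\Pi_1$ and $\Pi_2$, while $\ltwonorm[\Gamma]{\theta}\to 0$ follows from the growth hypothesis $\abs{g(\mu)}\lesssim \bigl(1+\abs{\mu}^{(d-1)/(d-2)}\bigr)$ via the strong-continuity statement of Lemma~\ref{lemma:approx_nonlinearity}\,(\ref{it:approx_nonlinearity_convergence_strong}). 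What survives is the pure time term $T(\Delta t)^{\min(\alpha-1,1/3)}$, giving the claimed convergence of $\uie^{\Delta t}+u^{inc}$ in $L^\infty((0,T);BL^1)$ and of $\dd\uie^{\Delta t}+\dot u^{inc}$ in $L^\infty((0,T);L^2(\R^d))$.
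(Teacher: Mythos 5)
Your proposal is correct and follows essentially the same route as the paper, whose proof of this corollary is simply to ``collect all the estimates from the previous sections'': a triangle inequality through the space-semidiscrete semigroup solution, with the time part handled by Proposition~\ref{prop:semigroup_approx} together with Lemma~\ref{lemma:comp_uie_usg} (as assembled in Theorem~\ref{thm:time_discretization}) and the space part by Theorem~\ref{thm:full_convergence_low_regularity}, the growth hypothesis on $g$ entering only to make $\theta$ converge in $L^2(\Gamma)$ via Lemma~\ref{lemma:approx_nonlinearity}~(\ref{it:approx_nonlinearity_convergence_strong}). Your explicit attention to the $h$-uniformity of the constant $\norm{\AA u_h(0)}$ in the Nevanlinna estimate is a point the paper passes over silently, and your resolution of it is sound.
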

\begin{proof}
  We just collect all the estimates from the previous sections. The stronger growth condition on $g$ is needed to ensure that nonlinearity-error $\theta$ converges in $L^2(\Gamma)$
  (see Lemma~\ref{lemma:approx_nonlinearity}~(\ref{it:approx_nonlinearity_convergence_strong}))
\end{proof}

\subsubsection{The case of higher regularity}
\label{subsect:high_regularity}
Previously, we avoided assuming any regularity of the exact solution $u$.
In order to get the full convergence rates, we need to make these additional assumptions.

\begin{assumption}
\label{assumption:u_is_regular}
  Assume that the exact solution of \eqref{eq:wave_eqn} has the following regularity properties:
  \begin{enumerate}[(i)]
    \item $\displaystyle u \in C^{p+1}\left((0,T); H^1(\Omega^{+})\right)$,
    \item $\displaystyle \dot{u} \in C^{p+1}\left((0,T); L^2(\Omega^{+})\right)$,
    \item $\gamma^+u,\gamma^+ \dot{u} \in L^{\infty}\left((0,T), H^{m}(\Gamma)\right)$,
    \item $\partial_n^+u, \partial_n^+\dot{u} \in L^{\infty}\left((0,T), H^{m-1}(\Gamma)\right)$,
    \item $\ddot{u} \in L^{\infty}\left((0,T), H^{m}(\Omega^-)\right)$,    
    \item $\gamma^+ \dot{u} \in L^{\infty}\left((0,T) \times \Gamma\right)$,
  \end{enumerate}
  for some $m\geq 1/2$. Here $p$ denotes the order of the multistep method that is used.
\end{assumption}

\begin{remark}
  We need the strong requirement of 
  $\gamma^+ \dot{u} \in L^{\infty}\left((0,T) \times \Gamma\right)$ in order to be able to apply
  Lemma~\ref{lemma:approx_nonlinearity}~(\ref{it:approx_nonlinearity_est_with_l_infty}). If we make stronger
  growth assumptions on $g'$, we can drop this requirement. \eremk
\end{remark}

Since we only made assumptions on the exact solution $u(t)$, we cannot
use the same procedure as for Theorem \ref{thm:full_convergence_low_regularity} of first considering semidiscretization in space and then treating the discretization in time separately,  since
this would require knowledge of the regularity of $u_h(t)$. 
Instead, we will perform the discretization in space and time simultaneously
using a variation of Theorem~\ref{thm:full_convergence_low_regularity}
in the time discrete setting.

The $G$-stability of the linear multistep methods used  allows us to analyse the convergence rate of the method, given that the exact solution has sufficient regularity in space and time.

\begin{lemma}
\label{lemma:smooth_convergence}
  Assume, that $g$ is strictly monotone as defined in~\eqref{def:strictly_monotone}.
  Let $\usgh^{n}$,$\vsgh^n$ denote the sequence of 
  approximations obtained by applying the BDF1 or BDF2 method to \eqref{eq:ode_semigroup}
  in $\HH$ and $u$ the exact solution of~\eqref{eq:wave_eqn}, with $v:=\dot{u}$.
  We will use the finite difference operator $D^{\Delta t}: \ell^2 \to \ell^2:$
  $[D^{\Delta t}u]^n:=\frac{1}{\Delta t}\sum_{j=0}^{k}{\alpha_j u^{n-j}}$ and with the same notation for continuous $u$ set
    $[D^{\Delta t}u]^n:=\frac{1}{\Delta t}\sum_{j=0}^{k}{\alpha_j u(t_{n-j})}$.

  We introduce the following error terms:
  \begin{align*}
    \Theta_I^n&:=\Pi_1\left( [D^{\Delta t} u]^n - \dot{u}(t_n) \right) \\
    \Theta_{II}^n&:=\Pi_2\left( [D^{\Delta t}v]^n - \dot{v}(t_n) \right) \\
    \Theta_{III}^n&:=\left(\Pi_1 - \Pi_2\right) v(t_n) \\
    \Theta_{IV}^n&:=\left(I - \Pi_2\right) \dot{v}(t_n) \\
    \Theta_{V}^n&:=g(\tracejump{v(t_n)})-g(\tracejump{\Pi_2 v(t_n)}) \\
    \Theta_{VI}^n&:=\left(I-\Pi_1\right)u(t_n).    
  \end{align*}
  Then 
  \begin{align}
    \label{eq:err_est_expl_projections_fd}
    \ltwonorm{\vsgh^n - v(t_n)}^2 &+ \ltwonorm{\nabla \usgh^n - \nabla u(t_n)}^2 +
      \beta \Delta t \sum_{j=0}^{n}{\ltwonorm[\Gamma]{\tracejump{(\vsgh^j - v(t_j))}}^2 \;ds} \\
    \lesssim \Delta t \sum_{j=0}^{n}&{ 
    \ltwonorm{\nabla \Theta_I^j}^2}
    + \ltwonorm{\Theta_{II}^j}^2 + \ltwonorm{\nabla{\Theta_{III}^j}}^2 + \ltwonorm{\Theta_{IV}^j}^2 \\
    &+\Delta t \sum_{j=0}^{n}{ \ltwonorm[\Gamma]{\Theta_{V}^j}^2  + \norm{\Theta_{VI}^j}_{\ltwoglobal}^2}.
  \end{align}
  The implied constant depends only on the parameter $\alpha$ in the definition of $\Pi_1$.
\end{lemma}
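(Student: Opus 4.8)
The plan is to transcribe the energy argument of Theorem~\ref{thm:full_convergence_low_regularity} into the time-discrete setting, replacing the continuous identity $\tfrac12\tfrac{d}{dt}\norm{e}_{\mathcal X}^2$ by the discrete telescoping supplied by the $G$-stability of Proposition~\ref{prop:Gstability}. Working entirely in $\HH$, I introduce the auxiliary error sequences $e_u^n:=\usgh^n-\Pi_1 u(t_n)$ and $e_v^n:=\vsgh^n-\Pi_2 v(t_n)$, which lie in $\HH$. Subtracting the exact weak form \eqref{eq:weak_form_sg_space_c} from the BDF discretization of \eqref{eq:ode_semigroup} (the time-discrete analogue of \eqref{eq:weak_form_sg_space_d}), inserting the projector definitions, and writing $\usgh^n-u(t_n)=e_u^n-\Theta_{VI}^n$, I would arrive at the error system: for all $w_h,z_h\in\HH$,
\begin{align*}
  \ltwoproduct{\nabla [D^{\Delta t} e_u]^n}{\nabla w_h} &= \ltwoproduct{\nabla e_v^n}{\nabla w_h} - \ltwoproduct{\nabla \Theta_{III}^n}{\nabla w_h} - \ltwoproduct{\nabla \Theta_I^n}{\nabla w_h}, \\
  \ltwoproduct{[D^{\Delta t} e_v]^n}{z_h} &= -\ltwoproduct{\nabla e_u^n}{\nabla z_h} - \dualproduct{g(\tracejump{\vsgh^n}) - g(\tracejump{\Pi_2 v(t_n)})}{\tracejump{z_h}} + \dualproduct{\Theta_V^n}{\tracejump{z_h}} - \alpha \ltwoproduct{\Theta_{VI}^n}{z_h} + \ltwoproduct{\Theta_{IV}^n}{z_h} - \ltwoproduct{\Theta_{II}^n}{z_h}.
\end{align*}

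The decisive point in deriving the second equation is the treatment of the boundary term. The exact weak form carries the extra contribution $\dualproduct{\normaljump{u(t_n)}}{\gamma^- z_h}$, while the substitution $\usgh^n-u(t_n)=e_u^n-\Theta_{VI}^n$ produces the term $\ltwoproduct{\nabla \Theta_{VI}^n}{\nabla z_h}$. By the defining relation of the Ritz projector (Definition~\ref{def:ritz_projector}), the latter equals $-\alpha\ltwoproduct{\Theta_{VI}^n}{z_h}-\dualproduct{\normaljump{u(t_n)}}{\gamma^- z_h}$, so the two $\normaljump{u(t_n)}$-contributions cancel exactly and the $H^1$-seminorm error of $\Pi_1$ is traded for the $L^2$ term $-\alpha\ltwoproduct{\Theta_{VI}^n}{z_h}$. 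This is precisely why $\Theta_{VI}$ enters the right-hand side only through $\norm{\cdot}_{\ltwoglobal}$.

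I then test the first equation with $w_h=e_u^n$ and the second with $z_h=e_v^n$ and add them: after taking real parts the skew cross terms $\ltwoproduct{\nabla e_v^n}{\nabla e_u^n}$ and $-\ltwoproduct{\nabla e_u^n}{\nabla e_v^n}$ cancel. Splitting $g(\tracejump{\vsgh^n})-g(\tracejump{\Pi_2 v(t_n)})$, whose argument difference is exactly $\tracejump{e_v^n}$, and applying the strict monotonicity \eqref{def:strictly_monotone} yields the good term $-\beta\ltwonorm[\Gamma]{\tracejump{e_v^n}}^2$. Next I invoke Proposition~\ref{prop:Gstability} separately in the two inner products $\ltwoproduct{\nabla\cdot}{\nabla\cdot}$ (for $e_u$) and $\ltwoproduct{\cdot}{\cdot}$ (for $e_v$), which bounds $\Re\ltwoproduct{\nabla[D^{\Delta t}e_u]^n}{\nabla e_u^n}$ and $\Re\ltwoproduct{[D^{\Delta t}e_v]^n}{e_v^n}$ below by the telescoping differences $\Delta t^{-1}\bigl(\|E_u^n\|_G^2-\|E_u^{n-1}\|_G^2\bigr)$ and $\Delta t^{-1}\bigl(\|E_v^n\|_G^2-\|E_v^{n-1}\|_G^2\bigr)$, where $E_\bullet^n:=(e_\bullet^n,\dots,e_\bullet^{n-k+1})$. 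Multiplying by $\Delta t$, summing, and using causality ($e_u^j=e_v^j=0$ for $j\le 0$, since $u(t_j)=0$ there) to discard the initial $G$-norms, I estimate the remaining $\Theta$-against-error pairings by Young's inequality. One half of $\beta\ltwonorm[\Gamma]{\tracejump{e_v^n}}^2$ is absorbed on the left; the surviving $\ltwonorm{\nabla e_u^n}^2$ and $\ltwonorm{e_v^n}^2$ are controlled, by positive-definiteness of $G$, by $\|E_u^n\|_G^2$ and $\|E_v^n\|_G^2$, so a discrete Gronwall inequality closes the bound. Finally, the equivalence of the $G$-norms with $\ltwonorm{\nabla\cdot}$ and $\ltwonorm{\cdot}$ together with the triangle inequality transfers the estimate from $e_u,e_v$ to the genuine errors $\usgh^n-u(t_n)$, $\vsgh^n-v(t_n)$, which gives \eqref{eq:err_est_expl_projections_fd}.

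The step I expect to be the main obstacle is the simultaneous application of $G$-stability in two distinct inner products coupled through the antisymmetric cross terms: the argument succeeds only because testing the first equation with $e_u$ and the second with $e_v$ reproduces the conservative structure of the continuous energy, allowing a single positive-definite $G$-matrix to telescope both components at once. The accompanying delicate points are the exact cancellation of the $\normaljump{u(t_n)}$ boundary term via the Ritz-projector identity (ensuring no $H^1$-seminorm contribution of $\Theta_{VI}$ survives) and checking that the discrete Gronwall constant stays independent of $\Delta t$; the remainder is routine bookkeeping with Young's inequality.
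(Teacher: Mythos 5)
Your argument reproduces the paper's proof essentially verbatim: the same projected error sequences $e_u^n,e_v^n$ (up to an irrelevant sign convention), the same error equations with the $\normaljump{u}$-terms cancelled through the Ritz-projector identity so that $\Theta_{VI}$ survives only as an $\alpha$-weighted $L^2$ pairing, testing with $e^n$, strict monotonicity for the $\beta$-term, $G$-stability (Proposition~\ref{prop:Gstability}) for the telescoping lower bound, and Young's inequality plus summation. The only cosmetic differences are that the paper invokes $G$-stability once in the product inner product of $\mathcal{X}$ rather than separately in the two components, and that it leaves the final absorption/discrete-Gronwall step and the treatment of the starting values implicit, where your write-up is slightly more explicit.
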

\begin{proof}
The proof is fairly similar to the one of Lemma~\ref{lemma:comp_uie_usg} and many of the similar
terms  appear.
We define the additional error sequence
\begin{align*}
  e^n:=\colvec{e_u^n \\ e_v^n}:=\colvec{\Pi_1 u(t_n) - \usgh^n \\ \Pi_2 v(t_n) -\vsgh^n }.
\end{align*}
The overall strategy of the proof is to substitute $e$ in the defining
equation for the multistep method and compute the truncation terms. We then test with $e^n$
in order to get discrete stability just as we did in 
Theorem~\ref{thm:expl_convergence_statement_low_regularity}.
The proof becomes technical, due to the many different error terms which appear.

The error $e^n$ solves the following equation for all $w_h,z_h \in \HH$, see
\eqref{eq:weak_form_sg_space_c}, 
\begin{align*}
  \ltwoproduct{\nabla [D^{\Delta t} e_u]^n }{\nabla w_h} &= \ltwoproduct{\nabla e_v^n}{\nabla w_h} +
                                         \ltwoproduct{\nabla \left(\Theta_I + \Theta_{III}\right)}{\nabla w_h}\\
  \ltwoproduct{[D^{\Delta t} e_v]^n}{z_h}&\begin{multlined}[t]=-\ltwoproduct{\nabla e_u}{\nabla z_h} 
  - \dualproduct{g(\tracejump{\vsgh^n})-g(\tracejump{\Pi_2 v(t_n)})}{\tracejump{z_h}}  \vspace{1mm}\\
+ \ltwoproduct{\Theta_{II} + \Theta_{IV} + \alpha \, \Theta_{VI}}{z_h} 
+ \dualproduct{\Theta_V}{\tracejump{z_h}}.\end{multlined}
\end{align*}

From the strict monotonicity of $g$, we obtain by testing with $e^n$ that
\begin{align*}
\dualproduct[\mathcal{X}]{[ D^{\Delta t} e]^n}{e^n }  
  + \beta\norm{\tracejump{e_v}}_{L^2(\Gamma)}^2
  &\leq\begin{multlined}[t]
    \ltwoproduct{\nabla \left( \Theta_I + \Theta_{III} \right)}{\nabla e^n_u} 
    + \dualproduct{\Theta_V}{\tracejump{e_v^n}} \vspace{1mm}\\
  + \ltwoproduct{\Theta_{II} + \Theta_{IV} 
  + \alpha \, \Theta_{VI}}{e^n_v}.  \end{multlined}
\end{align*}
We write $E^n:=(e^{n},\dots,e^{n-k})^T$  and use Proposition~\ref{prop:Gstability} to obtain a lower bound on the left-hand side. 
Using the Cauchy-Schwarz and Young inequalities on the right-hand side then gives
\begin{align*}
  \norm{E^n}_{G}^2 - \norm{E^{n-1}}^2_{G}
  + \beta \Delta t \norm{\tracejump{e_v^n}}_{L^2(\Gamma)}^2
  &\begin{multlined}[t]\lesssim  
    \Delta t \ltwonorm{\nabla \Theta_I + \nabla \Theta_{III}}^2
    + \Delta t \ltwonorm[\Gamma]{\Theta_V}^2 \\
  + \Delta t \ltwonorm{\Theta_{II} + \Theta_{IV} 
  + \alpha \, \Theta_{VI}}^2.\end{multlined}
\end{align*}
Summing over $n$ and noting the equivalence of the $G$-induced-norm to the standard $\R^{k}$ norm  gives the stated result.

\end{proof}

\begin{assumption}
\label{assumption:approximation_spaces}
  Assume that the spaces $X_h$, $Y_h$ and the operator $\bdryinterpY$ satisfy the following approximation properties
\begin{subequations}
    \label{eq:approx_spaces_assumption}
    \begin{align}
      \inf_{x_h \in X_h} \norm{ \varphi - x_h}_{H^{-1/2}(\Gamma)}
      &\leq C h^{1/2+\min(m,q+1)}
        \norm{\varphi}_{H^{m}_{pw}(\Gamma)}  \quad \quad \forall \varphi \in H^{m}_{pw}(\Gamma)), \\
        \norm{ \psi - \bdryinterpY \psi}_{H^{1/2}(\Gamma)}
      &\leq C h^{\min(m,q+1)-1/2}
        \norm{\psi}_{H^{m}_{pw}(\Gamma)}  \quad \quad \forall \psi \in H^{m}_{pw}(\Gamma), \label{eq:approx_spaces_assumption_yh}
    \end{align}
  \end{subequations}
  for  parameters $h>0$ and $q \in \N$, with constants that depend only on $\Gamma$ and $q$.
  Assume further  that the fictitious space $\widetilde{Y}_h$ and the operator $\volumeinterpY$ from Assumption~\ref{ass:X_tilde} also
  satisfy
  \begin{align}
    \label{eq:approx_spaces_assumption_l2}
    \norm{ u - \volumeinterpY{u}}_{L^2(\Omega^-)}
    &\leq C h^{\min(q+1,m)}
        \norm{u}_{H^{m}(\Omega^-)}  \quad \quad \forall u \in H^{m}(\Omega^-). 
  \end{align}
\end{assumption}

\begin{theorem}
  Assume that Assumptions~\ref{assumption:u_is_regular} and \ref{assumption:approximation_spaces} are
  satisfied and that we use BDF1($p=1$) or BDF2($p=2$) discretization in time. 
  Assume either $\abs{g'(s)}\lesssim 1+\abs{s}^q$ with $q \in \N$ for $d=2$ or $q\leq 1$ for $d=3$,
    or assume that $\norm{\Pi_2 \dot{u}}_{L^{\infty}(\R^d)} \leq C$ is uniformly bounded w.r.t.\ $h$.
  Then the following convergence result holds
  \begin{align*}
    \ltwonorm{\vsg^n - \dot{u}(t_n)}^2 + \ltwonorm{\nabla \usgh^n - u(t_n)}
    &=\bigO\left(T \left( h^{\min(q+1/2,m)} + \Delta t^p \right)\right).
  \end{align*}
  The constants involved depend only on $\Gamma$, $g$, $\alpha$, 
  and the constants implied in Assumptions \ref{assumption:u_is_regular}
  and Assumption \ref{assumption:approximation_spaces}.
\end{theorem}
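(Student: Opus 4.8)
The plan is to reduce everything to the six abstract error quantities $\Theta_I^n,\dots,\Theta_{VI}^n$ isolated in Lemma~\ref{lemma:smooth_convergence} and then bound each of them separately. Since $n\Delta t\le T$, every time-summed contribution $\Delta t\sum_{j=0}^n\norm{\Theta_\bullet^j}^2$ on the right-hand side of~\eqref{eq:err_est_expl_projections_fd} is a Riemann sum dominated by $T\max_{0\le j\le n}\norm{\Theta_\bullet^j}^2$, so it suffices to produce a uniform-in-$j$ bound for each term. These split naturally into purely temporal consistency errors ($\Theta_I,\Theta_{II}$), purely spatial approximation errors ($\Theta_{III},\Theta_{IV},\Theta_{VI}$), and the nonlinear error $\Theta_V$.

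For the temporal terms I would use that $\Theta_I^n=\Pi_1([D^{\Delta t}u]^n-\dot u(t_n))$ and $\Theta_{II}^n=\Pi_2([D^{\Delta t}v]^n-\dot v(t_n))$ are the BDF consistency defects pushed through a projector. The $H^1$-stability of $\Pi_1$ and the $L^2$-stability of $\Pi_2$ give $\ltwonorm{\nabla\Theta_I^n}\lesssim\Hpnorm{1}{[D^{\Delta t}u]^n-\dot u(t_n)}$ and $\ltwonorm{\Theta_{II}^n}\lesssim\ltwonorm{[D^{\Delta t}v]^n-\dot v(t_n)}$; a Taylor expansion of the $p$-th order BDF formula combined with the regularity $u\in C^{p+1}((0,T);H^1(\Omega^+))$ and $v=\dot u\in C^{p+1}((0,T);L^2(\Omega^+))$ from Assumption~\ref{assumption:u_is_regular} then yields the consistency order $\mathcal{O}(\Delta t^p)$. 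Here $\Pi_1$ is applicable because $u(t)\in\HpLglobal{1}$ for each $t$, the wave equation giving $\laplace u=\ddot u\in L^2(\R^d\setminus\Gamma)$.

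For the spatial terms I would invoke the approximation properties of the two projectors together with Assumption~\ref{assumption:approximation_spaces}. The term $\Theta_{IV}^n=(I-\Pi_2)\dot v(t_n)$ is controlled through Lemma~\ref{lemma:ltwo_stable_projection}(\ref{it:approximation_of_pi2}) and the $L^2$-rate~\eqref{eq:approx_spaces_assumption_l2}, the term $\Theta_{VI}^n=(I-\Pi_1)u(t_n)$ via the $H^1$-approximation property of the Ritz projector $\Pi_1$ (estimating its $L^2(\R^d\setminus\Gamma)$-norm by the $H^1$-norm), and $\Theta_{III}^n=(\Pi_1-\Pi_2)v(t_n)$ by splitting it as $(I-\Pi_2)v-(I-\Pi_1)v$ and bounding both pieces in $H^1$; the required spatial regularity of $u$, $v=\dot u$ and $\dot v=\ddot u$ is supplied by Assumption~\ref{assumption:u_is_regular}. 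In each case the order is dictated by the slower of the $X_h$-flux and $Y_h$-trace approximation rates in Assumption~\ref{assumption:approximation_spaces}, which produces the exponent $\min(q+1/2,m)$. The nonlinear term $\Theta_V^n=g(\tracejump{v(t_n)})-g(\tracejump{\Pi_2 v(t_n)})$ is where Lemma~\ref{lemma:approx_nonlinearity} enters: under the growth hypothesis $\abs{g'(s)}\lesssim 1+\abs{s}^q$ I apply part~(\ref{it:approx_nonlinearity_est_with_growth}) with $\eta=\tracejump{v(t_n)}$ and $\eta_h=\tracejump{\Pi_2 v(t_n)}$ to obtain $\ltwonorm[\Gamma]{\Theta_V^n}\lesssim\norm{\tracejump{v(t_n)}-\tracejump{\Pi_2 v(t_n)}}_{H^{1/2}(\Gamma)}$, with a constant uniform in $t$ thanks to $\gamma^+\dot u\in L^\infty((0,T);H^m(\Gamma))$, whereas under the alternative uniform bound $\norm{\Pi_2\dot u}_{L^\infty}\le C$ I use part~(\ref{it:approx_nonlinearity_est_with_l_infty}), which needs only the $L^2(\Gamma)$-trace error. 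In either case the remaining boundary error is bounded via the trace theorem by $\Hpnorm{1}{(I-\Pi_2)v(t_n)}$ and hence, through Lemma~\ref{lemma:ltwo_stable_projection}, by the same spatial rate.

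The main obstacle will be the nonlinear term $\Theta_V$. Unlike in the low-regularity Corollary~\ref{thm:expl_convergence_statement_low_regularity}, where only qualitative convergence of $g$ was needed, here I must turn the $L^2(\Gamma)$-error of $g$ into a quantitative spatial rate; this forces the bounded-growth condition on $g'$ (or the uniform $L^\infty$-bound on $\Pi_2\dot u$) and, crucially, demands that the constant $C(\eta)$ in Lemma~\ref{lemma:approx_nonlinearity}(\ref{it:approx_nonlinearity_est_with_growth}) be controlled uniformly in $t$ --- this is precisely the reason for the otherwise strong requirement $\gamma^+\dot u\in L^\infty((0,T)\times\Gamma)$ in Assumption~\ref{assumption:u_is_regular}. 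Collecting the $\mathcal{O}(\Delta t^p)$ from $\Theta_I,\Theta_{II}$ with the $\mathcal{O}(h^{\min(q+1/2,m)})$ from $\Theta_{III},\Theta_{IV},\Theta_V,\Theta_{VI}$, inserting these into~\eqref{eq:err_est_expl_projections_fd}, and absorbing the factor $T$ from the Riemann-sum bound then completes the estimate.
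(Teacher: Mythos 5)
Your proposal is correct and follows essentially the same route as the paper: the paper's (very terse) proof likewise combines the error decomposition of Lemma~\ref{lemma:smooth_convergence} with the stability and approximation properties of $\Pi_1$, $\Pi_2$ from Section~\ref{sect:approx_in_hh}, identifies $\Theta_I$, $\Theta_{II}$ as BDF truncation errors of order $\Delta t^p$, and handles $\Theta_V$ via Lemma~\ref{lemma:approx_nonlinearity}(\ref{it:approx_nonlinearity_est_with_growth}) or (\ref{it:approx_nonlinearity_est_with_l_infty}) under the respective hypotheses. Your write-up simply makes explicit the details (Riemann-sum bound, applicability of $\Pi_1$ via $\laplace u=\ddot u\in L^2$, the role of $\gamma^+\dot u\in L^\infty$ in uniformizing the constant in $\Theta_V$) that the paper leaves implicit.
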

\begin{proof}
  We already have all the ingredients for the proof.
  We combine Theorem~\ref{lemma:smooth_convergence} with the approximation properties of the operators
  from Section~\ref{sect:approx_in_hh}. $\Theta_I$ and $\Theta_{II}$ are the local truncation errors of the
  multistep method and therefore $\bigO(\Delta t)$ (the operators $\Pi_1$, $\Pi_2$ are stable). To estimate $\Theta_{V}$,
  the assumptions on $g$ or $\Pi_2$ are such that we can apply Lemma~\ref{lemma:approx_nonlinearity}.  
\end{proof}

\begin{remark}
  The assumptions on the spaces $X_h$,$Y_h$ are true if we use standard piecewise polynomials of
  degree $p$ for $Y_h$ and $p-1$ for $X_h$ on some triangulation $\TT_h$ of $\Gamma$.
  See \cite[Theorem 4.1.51, page 217]{book_sauter_schwab} and \cite[Theorem 4.3.20, page 260]{book_sauter_schwab}  
  for the proofs of the approximation properties.
  In this case, the approximation property \eqref{eq:approx_spaces_assumption_l2} holds via the
  construction from Lemma~\ref{lemma:ltwo_stable_projection} using the Scott-Zhang projection
  and standard finite element theory. The requirements on $\Pi_2 \dot{u}$ can be fulfilled in numerous ways,
  e.g., in 2D and 3D it can be shown using standard approximation and inverse estimates  as in \cite[Lemma 13.3]{thomee_book}.
  The same result can also be achieved by replacing $\Pi_2$ by some projector
  that allows $L^{\infty}$-estimates, e.g., nodal interpolation.\eremk
\end{remark}

\subsection{The case of more general $g$}
\label{sect:general_g}
In  all the previous theorems we assumed that $g$ was strictly monotone. In this section, we sketch what happens
if we drop this requirement. Most notably we lose all explicit error bounds and also the strong convergence.  What can be salvaged is 
a weaker convergence result.

We start with the weaker version of Theorem~\ref{thm:full_convergence_low_regularity} telling us that the semidiscretization with regards to space
converges weakly.
\begin{lemma}  
  \label{lemma:convergence_space_general_g}
  Assume the families of spaces $(X_h)_{h > 0}$ and $(Y_h)_{h>0}$ are dense in $H^{-1/2}(\Gamma)$ and
  $H^{1/2}(\Gamma)$ respectively.
  Then the sequence of solutions
  $u_h(t),v_h(t)$ of \eqref{eq:weak_form_sg_space_d} weakly towards the solution of \eqref{eq:weak_form_sg_space_c} for almost all $t \in (0,T)$
  for $h \to 0$. 
\end{lemma}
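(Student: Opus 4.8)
The plan is to run the classical weak-convergence argument for Galerkin discretisations of monotone evolution problems: first obtain $h$-uniform a-priori bounds, then extract weak limits, pass to the limit in the semidiscrete weak form using the weak continuity of $g$, and finally identify the limit by uniqueness.

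First I would collect the stability estimates. For each fixed $h$ the operator $\AA$ restricted to $\HH$ is maximally monotone on $\mathcal{X}$ by Theorem~\ref{thm:semigroup}, so the generated semigroup is non-expansive and Proposition~\ref{prop:komura_kato} yields, uniformly for $t \in [0,T]$, bounds on both $\norm{(u_h,v_h)(t)}_{\mathcal{X}}$ and $\norm{(\dot u_h,\dot v_h)(t)}_{\mathcal{X}} \leq \norm{\AA (u_h,v_h)(0)}_{\mathcal{X}}$, provided the discrete initial data are chosen so that the latter quantity is bounded independently of $h$ (which is automatic for the zero initial data of the scattering problem). Since $\dot u_h = v_h$, the $BL^1$-component of the second bound controls $\ltwonorm{\nabla v_h}$, and together with $\ltwonorm{v_h}$ from the first bound this gives a uniform bound on $v_h$ in $H^1(\R^d \setminus \Gamma)$. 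Consequently $\tracejump{v_h}$ is bounded in $H^{1/2}(\Gamma)$ uniformly in $h$ and $t$, and Lemma~\ref{lemma:g_is_bounded} makes $g(\tracejump{v_h})$ bounded in $H^{-1/2}(\Gamma)$; the same estimates also bound $u_h$ in $BL^1$ and $\laplace u_h$ in $L^2$.

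Next I would extract weak limits. Viewing $(u_h,v_h)$ and their time derivatives as bounded sequences in the relevant $L^2(0,T;\cdot)$ spaces, reflexivity gives a subsequence with $u_h \rightharpoonup u^*$, $v_h \rightharpoonup v^*$ (the latter in $L^2(0,T;H^1(\R^d\setminus\Gamma))$) and the time derivatives converging weakly to $\dot u^*,\dot v^*$; after a further subsequence this yields, for almost every $t$, the pointwise weak convergences $u_h(t)\rightharpoonup u^*(t)$ in $BL^1$ and $v_h(t)\rightharpoonup v^*(t)$ in $H^1(\R^d\setminus\Gamma)$, hence $\tracejump{v_h(t)}\rightharpoonup\tracejump{v^*(t)}$ in $H^{1/2}(\Gamma)$. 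To show $(u^*,v^*)$ solves the continuous problem I test \eqref{eq:weak_form_sg_space_d} with functions in $\HH$. For $z\in C_0^\infty(\Omega^+)\subseteq\HH$ the boundary terms vanish and I recover, in the limit, the wave equation for $(u^*,v^*)$ in $\Omega^+$; to recover the nonlinear boundary condition, given a target $z$ with $\gamma^- z = 0$ I use the density of $Y_h$ in $H^{1/2}(\Gamma)$ to construct $z_h\in\HH$ with $z_h\to z$ strongly in $H^1$ and $\tracejump{z_h}\to\tracejump{z}$ strongly in $H^{1/2}(\Gamma)$. The decisive term is the nonlinearity: by Lemma~\ref{lemma:approx_nonlinearity}~(\ref{it:approx_nonlinearity_convergence_weak}) we have $g(\tracejump{v_h})\rightharpoonup g(\tracejump{v^*})$ in $H^{-1/2}(\Gamma)$, so pairing the weakly convergent $g(\tracejump{v_h})$ with the strongly convergent $\tracejump{z_h}$ gives $\dualproduct{g(\tracejump{v_h})}{\tracejump{z_h}}\to\dualproduct{g(\tracejump{v^*})}{\tracejump{z}}$, while the linear terms pass by weak-strong pairing. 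A density argument in $X_h$ applied to $\gamma^- v_h\in X_h^\circ$ shows $\gamma^- v^* = 0$, placing $(u^*,v^*)$ in the correct domain.

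Finally, the limit $(u^*,v^*)$ then satisfies \eqref{eq:weak_form_sg_space_c} with the full trace spaces, i.e.\ it is a solution of the continuous wave problem, which is unique by Theorem~\ref{thm:semigroup}. Hence $(u^*,v^*)=(u,v)$, and since every subsequence admits a further subsequence converging to this same limit, the whole sequence converges weakly, as claimed. The main obstacle is exactly the nonlinear term: dropping strict monotonicity forfeits the strong convergence of the traces that the earlier proofs exploited, so the argument rests entirely on the weak-to-weak continuity of $g$ from Lemma~\ref{lemma:approx_nonlinearity}~(\ref{it:approx_nonlinearity_convergence_weak}), which in turn is only applicable because the second-order energy bound furnishes the uniform $H^1(\R^d\setminus\Gamma)$ control on $v_h$ needed to make $\tracejump{v_h}$ weakly convergent in $H^{1/2}(\Gamma)$.
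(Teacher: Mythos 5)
Your proposal follows essentially the same route as the paper's proof: uniform bounds from the energy identity and the Kōmura--Kato Lipschitz estimate (giving the crucial $h$-uniform $H^1$ control on $v_h$ via $\dot u_h = v_h$), weak compactness to extract limits, Lemma~\ref{lemma:approx_nonlinearity}~(\ref{it:approx_nonlinearity_convergence_weak}) to pass to the limit in the nonlinear boundary term, and uniqueness of the continuous solution to upgrade subsequence convergence to convergence of the whole family. Your write-up is somewhat more explicit than the paper's about identifying the limit equation (density of $Y_h$, $X_h$ and weak--strong pairing), but the key ideas and key lemma coincide.
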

\begin{proof}
  We fix $t \in (0,T]$; all the arguments hold only almost everywhere w.r.t $t$, but
  that is sufficient in order to prove the result.
  Since $g(\tracejump{v_h})\tracejump{v_h}\geq 0$,  testing with $w_h=u_h$ and $z_h=v_h$ in \eqref{eq:weak_form_sg_space_d} gives
  \begin{align*}
    \norm{u_h(t),v_h(t)}_{\mathcal{X}} &\leq \norm{u_h(0),v_h(0)}_{\mathcal{X}}.
  \end{align*}
  By the Eberlein-\v{S}mulian theorem, see for example \cite[page 141]{yosida_fana}, this gives a weakly convergent sub-sequence, that we 
  again denote by $u_h,v_h$ --- uniqueness of the solution will give convergence of the whole sequence anyway --- and write $u,v$ for its weak limit.
  It is easy to see that $\dot{u}_h \rightharpoonup \dot{u}$ and $\dot{v}_h \rightharpoonup \dot{v}$
   since the convergence is only with respect to the spatial discretization.  
  
  From the estimate in Proposition~\ref{prop:komura_kato}, we get
  $\norm{\dot{u}_h(t),\dot{v}_h(t)}_{\mathcal{X}}\leq C\left(u^{inc}\right)$,
  since $\left(u^{inc}(0),v^{inc}(0) \right) \in \HH$ as the incoming wave vanishes
  at the scatterer for $t=0$. Since $\dot{u}_h=v_h$,
  we have that $\norm{v_h(t)}_{\Hpglobal{1}}$ is uniformly bounded.
  This implies, up to a sub-sequence,
  that the trace also converges: $\tracejump{v_h(t)} \rightharpoonup \tracejump{v(t)}$ in $H^{1/2}(\Gamma)$.
  What remains to show is that
  $g\left(\tracejump{v_h(t)}\right) \rightharpoonup g\left(\tracejump{v(t)}\right)$.
  This was already done in Lemma~\ref{lemma:approx_nonlinearity}~(\ref{it:approx_nonlinearity_convergence_weak}).
\end{proof}

The convergence of the time-discretization does not depend on the strong monotonicity of $g$. This insight immediately gives the following corollary:
\begin{corollary}
  Assume the families of spaces $(X_h)_{h > 0}$ and $(Y_h)_{h>0}$ are dense in $H^{-1/2}(\Gamma)$ and
  $H^{1/2}(\Gamma)$ respectively and assume that the operator $\bdryinterpY$ converges strongly,
    i.e. for $y \in H^{1/2}(\Gamma)$,  $\bdryinterpY y \to y$ converges when $h \to 0$.
 
  Let $\uiet^{\Delta t}$, $\viet^{\Delta t}$ denote the piecewise linear interpolant between the time-stepping approximations $\uiet^n$, $\viet^n$
  from Lemma~\ref{lemma:comp_uie_usg} at nodes $n \Delta t$. Then these approximations converge weakly towards the solution of \eqref{eq:wave_eqn} for almost all $t \in (0,T)$
  for $\Delta t \to 0$ and $h \to 0$.  
\end{corollary}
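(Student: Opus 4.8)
The plan is to decouple the two discretization parameters and combine the weak spatial convergence of Lemma~\ref{lemma:convergence_space_general_g} with the strong temporal convergence furnished by the abstract semigroup theory, exploiting the fact -- emphasised just before the statement -- that the analysis of the time stepping never used the strict monotonicity of $g$, only maximal monotonicity. Thus the time-discrete convergence survives the passage to general $g$, and the only genuinely weak ingredient is the spatial limit.

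First I would fix $h>0$ and regard the spatial semidiscretization as the evolution equation governed by the maximally monotone operator $\AA$ on $\mathcal{X}$ built in Theorem~\ref{thm:semigroup}, whose semigroup solution is exactly the pair $(u_h,v_h)$ appearing in \eqref{eq:weak_form_sg_space_d}. Applying BDF1 or BDF2 to this equation produces $(\usgh,\vsgh)$, and Proposition~\ref{prop:semigroup_approx} -- equivalently Nevanlinna's stability/convergence theorem \cite{nevanlinna}, which only asks for maximal monotonicity -- yields a bound of the form $\max_n \norm{(u_h,v_h)(t_n)-(\usgh^n,\vsgh^n)}_{\mathcal{X}} \lesssim \norm{\AA(u_h,v_h)(0)}_{\mathcal{X}}\,(\Delta t)^{1/3}$. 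The decisive observation, which I would check carefully, is that this constant is \emph{uniform in $h$}: the initial data lie in $\HH$ uniformly because the incident wave vanishes in a neighbourhood of $\Omega^-$ at $t=0$, so the a priori bound $\norm{\AA(u_h,v_h)(0)}_{\mathcal{X}}\leq C(u^{inc})$ already established in the proof of Lemma~\ref{lemma:convergence_space_general_g} (via Proposition~\ref{prop:komura_kato}) applies. Consequently $(\usgh^n,\vsgh^n)\to(u_h,v_h)(t_n)$ strongly in $\mathcal{X}$ at a rate in $\Delta t$ that does not degenerate as $h\to 0$.

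Next I would pass from the semigroup approximation $(\usgh,\vsgh)$ to the shifted integral-equation sequence $(\uiet^{\Delta t},\viet^{\Delta t})$ using Lemma~\ref{lemma:comp_uie_usg}. The discrepancy between the two is controlled by the three $\Delta t$-weighted sums of projection defects $\norm{(I-\bdryinterpY)\gamma^+\dot{u}^{inc}}_{H^{1/2}(\Gamma)}$, $\inf_{x_h\in X_h}\norm{\partial_n^+\dot{u}^{inc}-x_h}_{H^{-1/2}(\Gamma)}$ and $\norm{(I-\bdryinterpY)\gamma^+\ddot{u}^{inc}}_{H^{1/2}(\Gamma)}$; each such sum is bounded by $T$ times the corresponding supremum and vanishes as $h\to 0$ precisely under the assumed strong convergence of $\bdryinterpY$ together with the density of $X_h$. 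Since the BDF values are defined only at nodes, I would finally note that the piecewise linear interpolants inherit the convergence: the semigroup solution is Lipschitz in time by Proposition~\ref{prop:komura_kato}, so interpolation between converging nodal values converges for almost every $t\in(0,T)$.

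The conclusion then follows from the triangle split, for any $\phi\in\mathcal{X}$ and almost every fixed $t$,
\begin{align*}
  \left(\uiet^{\Delta t}(t)-u(t),\phi\right)_{\mathcal{X}}
  &= \left(\uiet^{\Delta t}(t)-u_h(t),\phi\right)_{\mathcal{X}}
   + \left(u_h(t)-u(t),\phi\right)_{\mathcal{X}},
\end{align*}
where the first term tends to zero by the $h$-uniform temporal convergence of the previous steps together with the vanishing projection defects, and the second tends to zero by the weak spatial convergence of Lemma~\ref{lemma:convergence_space_general_g}. The main obstacle is exactly this \emph{joint} limit: since the spatial limit is only weak while the temporal one is strong, one cannot merely iterate the two passages to the limit, and the whole argument rests on the uniformity in $h$ of the temporal error constant. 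Establishing that $\norm{\AA(u_h,v_h)(0)}_{\mathcal{X}}$, and hence the Nevanlinna constant, is bounded independently of $h$ is therefore the crux; once it is secured, the strong first term can be made small independently of $h$, and the weak second term is treated on its own.
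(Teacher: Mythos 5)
Your proposal is correct and follows essentially the same route as the paper: the paper's (much terser) proof likewise observes that the time-discretization analysis of Lemma~\ref{lemma:comp_uie_usg} and Theorem~\ref{thm:time_discretization} never uses strict monotonicity of $g$, so $\uiet^{\Delta t}$, $\viet^{\Delta t}$ converge strongly to the semidiscrete semigroup solution $(u_h,v_h)$, and then invokes Lemma~\ref{lemma:convergence_space_general_g} for the weak spatial limit. Your explicit verification that the Nevanlinna constant $\norm{\AA(u_h,v_h)(0)}_{\mathcal{X}}\leq C(u^{inc})$ is uniform in $h$ --- which is exactly what makes the joint weak/strong limit legitimate --- is a detail the paper leaves implicit (it appears only inside the proof of Lemma~\ref{lemma:convergence_space_general_g}), and supplying it strengthens rather than alters the argument.
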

\begin{proof}
  Inspecting the proof of Lemma~\ref{lemma:comp_uie_usg} and Theorem~\ref{thm:time_discretization}, we didn't require $g$ to be strictly monotone, therefore we get that
  $\uiet^{\Delta t}$ and $\viet^{\Delta t}$ converge (strongly) to $u_h$ and $v_h$ respectively. Then the result follows directly from Lemma~\ref{lemma:convergence_space_general_g}.
\end{proof}

\section{Numerical Results}
\label{sect:numerics}
In this section, we investigate the convergence of the numerical method via numerical experiments.
We implemented the algorithm using the BEM++ software library \cite{bempp}. 
In order to solve the nonlinear equation~\eqref{eq:fully_discrete_int_eq}, we linearize $g$ and
performed a Newton iteration, i.e.\ at each Newton step, we solve
\begin{align}
\label{eq:linearized_integral_eqn}
  \dualproduct{ B_{\text{imp}} \left(\frac{\delta(0)}{\Delta t}\right)
    \colvec{\varphi^{n,k+1} \\ \psi^{n,k+1}}} {\colvec{\xi \\ \eta}}
  + \dualproduct{g'(\psi^{n,k} + \bdryinterpY \dot{u}^{inc}(t_n)) \psi^{n,k+1} }{\eta}
&= \dualproduct{f^n}{\colvec{\xi \\ \eta}}
  + \dualproduct{g_n^{k}}{\eta},
\end{align}
 with $\displaystyle f^n:=-\colvec{0 \\ \partial_n^+ u^{inc}(t_n)} - \sum_{j=0}^{n-1}{ B_{n-j} \colvec{\varphi^{j} \\\psi^j}}$
 and $g_n^{k}:=-g\left(\psi^{n,k} + \bdryinterpY \dot{u}^{inc}(t_n)\right) 
 + g'\left(\psi^{n,k} +\bdryinterpY \dot{u}^{inc}(t_n) \right) \psi^{n,k}$.
 
 For $\bdryinterpY$ we used an $L^2$ projection.
 As we can see, the right-hand side consists of two parts, where $g_n^k$ has to be recomputed in each
 Newton step but only involves local computations in time and space. The computationally much more expensive
 part $f^n$ is the same for each Newton step and thus has only to be computed once.
 Therefore,  as long as the convergence of the Newton iteration is reasonably fast, the additional cost due to the nonlinearity is small. This was already noted in
 \cite{banjai_waves2015}.  In order to efficiently solve these convolution equations, we employed the recursive algorithm
 based on approximating the convolution weights with an FFT described in\cite{banjai_algorithms}.

 For $X_h$ we used piecewise polynomials of some fixed degree $p$ on a triangulation of $\Gamma$ and for $Y_h$
  globally continuous piecewise polynomials of degree $p+1$.
 
 In order to be able to compute an estimate of the error of the numerical method, we need
 to have a good approximation to the exact solution. This was obtained by choosing
 a sufficiently small step size compared to the numerical approximation and always use the second order BDF2-method; we used
 at least $\Delta t_{ex} \leq \frac{\Delta t}{4}$ for the scalar examples and $\Delta t_{ex} \leq \frac{\Delta t}{2}$ for the full 3D problems.

 Since it is difficult to compute the norms $\norm{u}_{\Hpglobal{1}}$ and $\ltwonorm{v}$ from
   the representation formula, we will instead compare the errors of the traces on the boundary. The
   convergence rate of these is the content of Lemma~\ref{lemma:error_of_traces}. To prove this lemma, we
   first need the following simple result.
 \begin{lemma}
   \label{lemma:error_est_after_integration}
   Let $f \in C^{r}(X)$, $\widetilde{f}\in C(X)$ for some Banach space $X$,
   with $0=f(0)=f'(0)=\dots=f^{(r-1)}(0)=\widetilde{f}(0)$ and $r\leq p$. Then 
   \begin{align*}
     \norm{ \partial_t^{-1} f  - \left(\dd\right)^{-1} \widetilde{f} }_{X}
     &\leq C t \left[ \left(\Delta t\right)^{r} \max_{\tau\in[0,t]}{\norm{f^{r}(\tau)}_{X} } 
       +  \max_{\tau \in [0,t]}{\norm{f(\tau) - \widetilde{f}(\tau)}_{X}}\right].       
   \end{align*}
 \end{lemma}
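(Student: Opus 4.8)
The plan is to reduce the statement to two standard facts about the integration operator in the operational calculus: the consistency of its convolution-quadrature approximation, and the boundedness of the associated quadrature weights. Writing $K(s):=s^{-1}$, so that $K(\partial_t)=\partial_t^{-1}$ and $K(\dd)=(\dd)^{-1}$ by Definition~\ref{def:operational_calculus} and Definition~\ref{def:cq_opearational_calculus}, I would start from the decomposition
\begin{align*}
  \partial_t^{-1} f - \left(\dd\right)^{-1}\widetilde{f}
  = \Big(\partial_t^{-1} f - \left(\dd\right)^{-1} f\Big)
  + \left(\dd\right)^{-1}\big(f-\widetilde{f}\big),
\end{align*}
and bound the two summands by the two terms on the right-hand side of the claimed inequality, respectively.

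For the second summand only the structure of the discrete integration weights matters. By Definition~\ref{def:cq_opearational_calculus} the weights $K_j$ of $(\dd)^{-1}$ are the Taylor coefficients of $\Delta t/\delta(z)$. Since BDF1 and BDF2 are consistent and $A$-stable, the generating function $\delta$ has $z=1$ as its only (simple) root in the closed unit disk, so $(1-z)/\delta(z)$ is analytic on a disk of radius strictly larger than one. Writing $\frac{\Delta t}{\delta(z)}=\frac{\Delta t}{1-z}\cdot\frac{1-z}{\delta(z)}$ and multiplying the two power series, the coefficients of the second factor decay geometrically, whence $\abs{K_j}\le C\,\Delta t$ and $\sum_{j=0}^n\abs{K_j}\le C(n+1)\Delta t\le C\,t$ for $t=t_n$. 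The triangle inequality then gives
\begin{align*}
  \norm{\Big[\left(\dd\right)^{-1}\big(f-\widetilde{f}\big)\Big]^n}_{X}
  \le \Big(\sum_{j=0}^n\abs{K_j}\Big)\max_{\tau\in[0,t]}\norm{f(\tau)-\widetilde{f}(\tau)}_{X}
  \le C\,t\,\max_{\tau\in[0,t]}\norm{f(\tau)-\widetilde{f}(\tau)}_{X},
\end{align*}
which is the second term of the asserted bound.

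For the first summand I would invoke the convergence theory of convolution quadrature for the transfer function $K(s)=s^{-1}$, which satisfies $\abs{K(s)}\le\abs{s}^{-1}$ for $\Re(s)>0$. Because the underlying multistep method has order $p$ and the data fulfils $f\in C^r(X)$ with $f(0)=\dots=f^{(r-1)}(0)=0$ and $r\le p$, the quadrature is consistent of order $r$, yielding
\begin{align*}
  \norm{\big(\partial_t^{-1} f - \left(\dd\right)^{-1} f\big)(t_n)}_{X}
  \le C\,t\,(\Delta t)^{r}\max_{\tau\in[0,t]}\norm{f^{(r)}(\tau)}_{X}.
\end{align*}
The attainable order is exactly the number $r$ of vanishing initial derivatives (so that $r\le p$ suffices), and the extra factor $t$ stems from accumulating the $\bigO\big((\Delta t)^{r+1}\big)$ local errors over the $n\sim t/\Delta t$ steps. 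For BDF1 this is transparent: $(\dd)^{-1}$ reduces to the right-endpoint rectangle rule with weights $K_j=\Delta t$, and the per-interval quadrature error is $\bigO\big((\Delta t)^2\,\norm{f'}\big)$, summing to $\bigO\big(t\,\Delta t\,\norm{f'}\big)$. Adding the two bounds gives the claim.

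The main obstacle is the order-$r$ consistency estimate for the first summand, i.e.\ verifying that only $r$ (rather than $p$) vanishing initial derivatives are needed and tracking the linear-in-$t$ constant. I would obtain this from Lubich's operational-calculus error analysis, comparing $\Delta t/\delta(z)$ with $s^{-1}$ under the substitution $s=\delta(z)/\Delta t$: the vanishing initial conditions remove precisely the low-order terms in the Taylor expansion of $\delta(z)$ about $z=0$, leaving a remainder of size $(\Delta t)^r$, and the inverse $Z$-transform over $n$ steps produces the factor $t$.
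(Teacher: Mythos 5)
Your proposal is correct and follows essentially the same route as the paper: the identical triangle-inequality splitting into a pure consistency error $\partial_t^{-1}f-(\dd)^{-1}f$ plus the discretely integrated data difference $(\dd)^{-1}(f-\widetilde f)$, with the first term handled by Lubich's convolution-quadrature error theory for the sectorial symbol $s^{-1}$ (the paper cites \cite[Theorem 3.1]{lubich_cq1}) and the second by boundedness of the quadrature weights. You merely spell out the weight bound $\sum_j\abs{K_j}\lesssim t$ and the order-$r$ consistency mechanism in more detail than the paper, which leaves both to the cited reference.
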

 \begin{proof}
   We split the error into two terms by writing
   \begin{align*}
     \norm{ \partial_t^{-1}f  - \left(\dd\right)^{-1} \widetilde{f} }_{X}
     &\leq \norm{ \partial_t^{-1}f  - \left(\dd\right)^{-1} f }_{X} +
       \norm{ \left(\dd\right)^{-1} \left( f - \widetilde{f} \right) }_{X}.
   \end{align*}
   The stated estimate then follows from the standard theory of convolution quadrature;  see \cite[Theorem 3.1]{lubich_cq1},  noting that $\partial_t^{-1}$ is a sectorial operator.
 \end{proof}

 This now allows us to prove convergence estimates for $\psi$ and $\varphi$.
 \begin{lemma}
   \label{lemma:error_of_traces}
   Let $u$ solve \eqref{eq:ode_semigroup}, write $\widetilde{u}(t):=u(t) - u^{inc}(t)$,  and define the traces $\psi(t):=\partial_t \gamma^+ \widetilde{u}(t)$ and
   $\varphi(t):=-\partial_n^+ \widetilde{u}(t)$.
   Let $\uie$, $\vie$ solve \eqref{eq:diff_eq_u_ie}, with corresponding traces
   $\varphi_{ie}:=-\normaljump{\uie}$, $\psi_{ie}:=\tracejump{\vie}$.
   Then 
   \begin{align*}
     \triplenorm{\colvec{\partial_t^{-1} \psi(t_n) - \left[\left(\dd\right)^{-1} \psi_{ie}\right]^n \vspace{1mm}\\ 
     \partial_t^{-1} \varphi(t_n) - \left[\left(\dd\right)^{-1} \varphi_{ie}\right]^n }}    
     &\lesssim \ltwonorm{ \nabla \widetilde{u}(t_j) - \nabla \uie^{j}} + 
       \ltwonorm{\dot{\widetilde u}(t_j) - \vie^{j}}  + \bigO\left(\Delta t^{r}\right),
   \end{align*}
holds  for all $n$ with $n \Delta t \leq T$, with constants
   that depend only on $\Gamma$ and the end time $T$, and where $r$ is the minimal regularity  index of $\psi(t)$ and $\varphi(t)$.
 \end{lemma}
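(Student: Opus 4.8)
The plan is to move the convolution-quadrature operators off the boundary data and onto the volume functions $\uie,\vie$ --- since $\dd$ and $(\dd)^{-1}$ act only in time they commute with the spatial operators $\gamma^{\pm},\partial_n^{\pm}$ and with the jumps --- and then to bound the two entries of $\triplenorm{\cdot}$ by volume norms of $\widetilde u-\uie$ and $\dot{\widetilde u}-\vie$. As a preliminary I would record that both the exact field and its time-discrete analogue vanish in $\Omega^-$: $\widetilde u|_{\Omega^-}\equiv 0$ by uniqueness for the interior wave equation with homogeneous Dirichlet and initial data, and $\uie^n|_{\Omega^-}\equiv 0$ by the same uniqueness applied stepwise to the $A$-stable recursion (each step is a coercive elliptic problem in $\Omega^-$ with homogeneous Dirichlet data and a right-hand side inherited from the --- vanishing --- previous steps). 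Consequently $\gamma^+ w=\tracejump{w}$ and $\partial_n^+ w=\normaljump{w}$ for $w\in\{\widetilde u,\uie\}$, which removes the mismatch between the exterior traces defining $\psi,\varphi$ and the jumps defining $\psi_{ie},\varphi_{ie}$.

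For the $H^{1/2}$-entry I would observe that the integration is \emph{exact} on both sides: $\partial_t^{-1}\psi=\partial_t^{-1}\partial_t\gamma^+\widetilde u=\tracejump{\widetilde u}$, while $\vie=\dd\uie$ gives $\psi_{ie}=\dd\tracejump{\uie}$ and hence $[(\dd)^{-1}\psi_{ie}]^n=\tracejump{\uie^n}$. The error is therefore $\tracejump{\widetilde u(t_n)-\uie^n}$, and the trace inequality on $BL^1$ bounds it by $\ltwonorm{\nabla\widetilde u(t_n)-\nabla\uie^n}$, with no time-discretization contribution.

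The $H^{-1/2}$-entry is the crux, and here the order of operations matters: I would integrate once \emph{before} taking the conormal trace. With $w:=\partial_t^{-1}\widetilde u$ and $W^n:=[(\dd)^{-1}\uie]^n$ the same commutation yields $\partial_t^{-1}\varphi(t_n)-[(\dd)^{-1}\varphi_{ie}]^n=-\normaljump{w(t_n)-W^n}$, which I would estimate by the graph norm $\norm{\normaljump{w(t_n)-W^n}}_{H^{-1/2}(\Gamma)}\lesssim\ltwonorm{\nabla(w(t_n)-W^n)}+\ltwonorm{\laplace(w(t_n)-W^n)}$, the conormal trace on $BL^1$ seeing only the gradient and the Laplacian. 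The Laplacian term is exact: from $\laplace\widetilde u=\ddot{\widetilde u}$ and $\laplace\uie=\dd\vie$ one gets $\laplace(w(t_n)-W^n)=\dot{\widetilde u}(t_n)-\vie^n$, i.e.\ precisely the velocity error. The gradient term is the only place the time discretization enters; applying Lemma~\ref{lemma:error_est_after_integration} componentwise with $f=\nabla\widetilde u$, $\widetilde f=\nabla\uie$ in $X=L^2$ (its hypotheses hold since $\widetilde u$ and its time derivatives vanish at $t=0$ and $\uie^0=0$) gives $\ltwonorm{\nabla(w(t_n)-W^n)}\lesssim t_n\big[(\Delta t)^r\max_\tau\ltwonorm{\nabla\partial_t^r\widetilde u(\tau)}+\max_{j\le n}\ltwonorm{\nabla\widetilde u(t_j)-\nabla\uie^j}\big]$.

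Summing the two entries gives the asserted estimate, the $\bigO(\Delta t^r)$ arising solely from the gradient part of the $\varphi$-entry with $r$ the regularity index from the statement (which renders $\varphi$, equivalently $\nabla\widetilde u$, admissible in Lemma~\ref{lemma:error_est_after_integration}). The main obstacle I anticipate is precisely this bookkeeping of the order of operations: integrating the normal trace once turns the Laplacian into the \emph{velocity} rather than the acceleration via the wave equation, and confines the single unavoidable convolution-quadrature integration error to the gradient, where Lemma~\ref{lemma:error_est_after_integration} applies without invoking higher time derivatives of the volume errors. The remaining points --- the commutation of the temporal operators with the traces, and the trace and conormal inequalities on $BL^1$, which are insensitive to the additive constants left undetermined by the $BL^1$-representatives --- are routine.
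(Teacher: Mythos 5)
Your overall strategy --- commuting the temporal operators with the traces, reducing both entries to volume errors of $\widetilde u-\uie$ and $\dot{\widetilde u}-\vie$, and invoking Lemma~\ref{lemma:error_est_after_integration} for the integrated quantities --- is the paper's strategy, and your treatment of the $\varphi$-entry is essentially the paper's (if anything slightly sharper: the Green's-identity bound $\norm{\normaljump{w}}_{H^{-1/2}(\Gamma)}\lesssim \ltwonorm{\nabla w}+\ltwonorm{\laplace w}$ lets you apply Lemma~\ref{lemma:error_est_after_integration} once to the gradient, where the paper applies it twice after passing through the full $\HpLglobal{1}$-norm). The gap is in the $\psi$-entry, and it sits in your preliminary reduction. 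First, the claim $\uie^n|_{\Omega^-}\equiv 0$ is false for general $X_h$: membership in $\HH$ only forces $\gamma^-\uie^n\in X_h^\circ$, not $\gamma^-\uie^n=0$, so your ``homogeneous Dirichlet'' uniqueness argument does not apply --- in the exotic-transmission framework of Lemma~\ref{lemma:diff_eq_u_ie} the interior field is precisely the nonzero spurious field carrying the Galerkin error, and it vanishes only in the special case $X_h=H^{-1/2}(\Gamma)$. Second, even granting that reduction, the assertion that $\norm{\tracejump{\widetilde u(t_n)-\uie^n}}_{H^{1/2}(\Gamma)}$ is controlled by $\ltwonorm{\nabla(\widetilde u(t_n)-\uie^n)}$ alone is not the standard trace theorem: a gradient-only trace bound on the exterior domain requires a Hardy/Poincar\'e inequality together with a decay normalization of the representative (and fails for $d=2$), and the jump is not even well defined on the quotient space $BL^1$. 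So the conclusion that the $H^{1/2}$-entry carries ``no time-discretization contribution'' is unjustified.

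The repair is exactly what the paper does, and it costs you nothing in the final estimate: bound the jump by the full norm $\norm{\widetilde u(t_n)-\uie^n}_{\Hpglobal{1}}$ via the two-sided trace theorem, and control the missing $L^2$ part by writing $\ltwonorm{\widetilde u(t_n)-\uie^n}=\ltwonorm{\partial_t^{-1}\dot{\widetilde u}(t_n)-[(\dd)^{-1}\vie]^n}$ and applying Lemma~\ref{lemma:error_est_after_integration} --- the same move you already make for the gradient term of the $\varphi$-entry. This is precisely where the terms $\ltwonorm{\dot{\widetilde u}(t_j)-\vie^j}$ and $\bigO(\Delta t^r)$ enter the $\psi$-part of the stated bound; they are already present on the right-hand side of the lemma, so the statement is unaffected, but your proof as written does not establish it.
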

 \begin{proof}
   From the definition of $\psi$ and
   the properties of the operational calculus we have that
   \begin{align*}
      \left(\dd\right)^{-1} \psi_{ie}&=\left(\dd\right)^{-1}\tracejump{\vie} 
                                    =\left(\dd\right)^{-1} \dd \tracejump{\uie}  = \tracejump{\uie}
   \end{align*}
   and analogously $\partial_t^{-1} \psi= \tracejump{\widetilde{u}}$.
   The standard trace theorem then gives the estimate
   \begin{align*}
     \norm{\partial_t^{-1} \psi(t_n) - \left[\left(\dd\right)^{-1} \psi_{ie}\right]^n}_{H^{1/2}(\Gamma)}
     &\leq C \norm{ \widetilde{u} - \uie}_{\Hpglobal{1}}.
   \end{align*}
   We can further estimate the $L^2$ contribution in the norm above by noting
   \begin{align*}
     \ltwonorm{\widetilde{u}(t_n) - \uie^n}
     &= \ltwonorm{\partial^{-1}_t \dot{\widetilde{u}}(t_n) - \left(\dd\right)^{-1} \vie^n}
   \end{align*}
   and applying Lemma~\ref{lemma:error_est_after_integration}.

   Since $\uie$ solves \eqref{eq:diff_eq_u_ie} and $\laplace$ is linear,
   for the second estimate we note that
   \begin{align*}
     \laplace \left(\dd\right)^{-1} \uie &= \left(\dd\right)^{-1} \laplace \uie 
     =\left(\dd\right)^{-1} \dd \vie = \vie.
   \end{align*}
   and analogously $\laplace \left(\partial_t^{-1} \widetilde{u} \right)= \dot{\widetilde{u}}$.

   From the definition of $\varphi$ and the stability of the normal trace operator in $\HpLglobal{1}$
   we have 
   \begin{align*}
     \norm{\partial_t^{-1} \varphi(t_n) - \left[\left(\dd\right)^{-1} \varphi_{ie}\right]^n}_{H^{-1/2}(\Gamma)}
     &\leq C \norm{ \partial_t^{-1} \widetilde{u}(t_n) - \left[\left(\dd\right)^{-1} \uie\right]^n}_{\HpLglobal{1}} \\
     &\leq  C \left[\norm{ \partial_t^{-1} \widetilde{u}(t_n) - \left(\dd\right)^{-1} \uie}_{\Hpglobal{1}}  
       +\ltwonorm{\dot{\widetilde{ u}}(t_n) - \vie^{n}} \right].
   \end{align*}
   We apply Lemma~\ref{lemma:error_est_after_integration} twice to estimate the $H^1$ term of the integral
   by the $BL^1$ norm and the $L^2$ norm of the derivative up to higher order error terms. 
 \end{proof} 

\subsection{A scalar example} 
For the first example, we are only interested in convergence with respect to the time
discretization.
As geometry $\Omega^-$ we choose the unit sphere, the right-hand sides are chosen to be constant in 
space, i.e. $u^{inc}(x,t)=u^{inc}(t)$ and $\partial_n^+ u^{inc}(x,t)=0$ on $\Gamma$; see \cite{sauter_veit} for this approach in the linear case. The constant functions
are eigenfunctions of the integral operators $V(s),W(s),K(s),K^t(s)$ in the Laplace domain.
We therefore can replace the integral operators with $\lambda_{T}(s) M$ where for $T \in \{V,W,K,K^t\}$,
$\lambda_T(s)$ denotes the respective eigenvalue of $T(s)$ and $M$ is the mass matrix. It is also easy to see
that $\psi$ and $\varphi$ are constant in space and \eqref{eq:linearized_integral_eqn} can be reduced to a scalar problem which can be solved efficiently. 

\begin{example}
\label{ex:scalar_convergence}
Let $g(\mu):=\frac{1}{2}\mu+\abs{\mu}\mu$ and
  $u^{inc}(t)=-2\,e^{-10(t-t_0)^2}$ with
  $t_0=\pi/2$ and final time  $T=3$.
  The convergence of the method with respect to time 
  can be seen in Figure~\ref{fig:convergence_scalar}, where we plotted the error in approximating $\partial_t^{-1} \psi^n$.
  Since we only consider the scalar case, the error for $\varphi$ is virtually indistinguishable.
  We see that for both the implicit Euler and the BDF2 scheme, the full order of convergence is obtained.
  Investigating the solution, this is somewhat surprising, as the second derivative of $\psi$ has 
  a discontinuity; see Figure~\ref{fig:exact_psi_scalar}.
  Thus the BDF2 method performs better than predicted.
  Investigating the convergence of the Newton iteration, it appears on average that it is sufficient
  to make $3-4$ iterations to reduce the increment to $10^{-8}$, i.e., the additional cost due to the
  nonlinearity is negligible compared to the computation of the history.
\end{example}

\begin{figure}
  \centering

  \begin{subfigure}[b]{0.4\textwidth} 
    \begin{minipage}[t][10cm][t]{\textwidth}
      %\includeTikzOrEps{figures/exact_psi_scalar} 
      \includegraphics{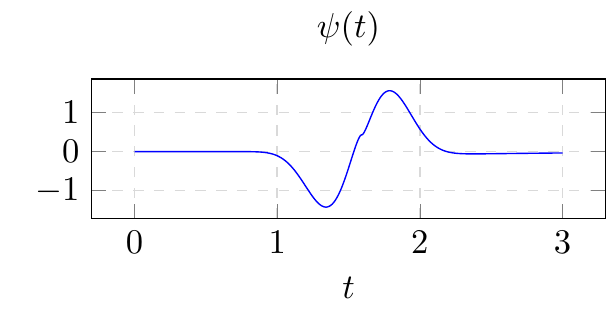}
      \includegraphics{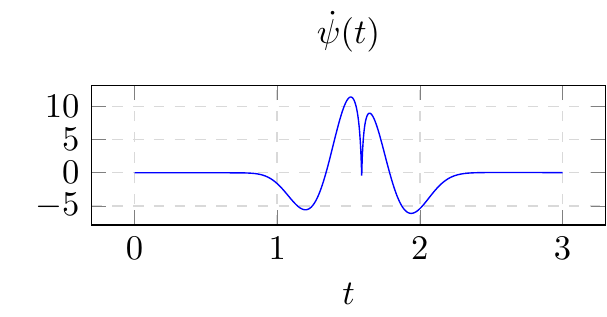}
      \includegraphics{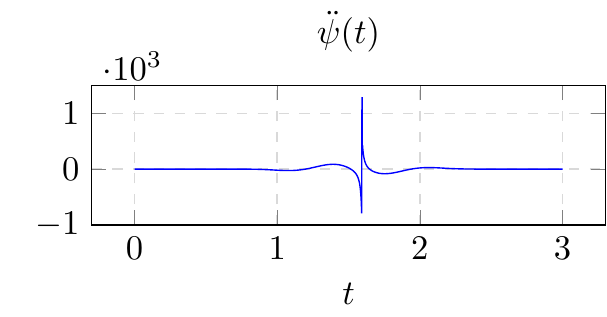}
    \end{minipage}
    \caption{Exact solution $\psi(t)$}
    \label{fig:exact_psi_scalar}
  \end{subfigure}
  \quad
  \begin{subfigure}[b]{0.5\textwidth}
    \begin{minipage}[t][10cm][t]{\textwidth}
     \includeTikzOrEps{figures/convergence_scalar}
   \end{minipage}
   \caption{Convergence rates }
   \label{fig:convergence_scalar}
  \end{subfigure}
  \caption{Numerical results for Example~\ref{ex:scalar_convergence} }
\end{figure}

\subsection{Scattering of a plane wave}
\begin{example}
\label{ex:fd_convergence}
In this example  $\Omega^-$ is the unit cube cube $[0,1]^3$ and $u^{inc}(x,t)$ is a
traveling wave given by
\begin{align*}
  u^{inc}(x,t):=e^{-A \left(x \cdot a - t - t_0 \right)^2},
\end{align*}
where $t_0=-2.5$, $A=8.0$, and $a=(1,-1,0)$.  We calculate the solution up to the end time
$T=4$ using a BDF2 scheme. 
For space discretization we use discontinuous piecewise linears for $X_h$ with $\operatorname{dim}(X_h)=7308$
and continuous piecewise quadratics with  $\operatorname{dim}(Y_h)=2438 $.
As an exact solution, we use the BDF2 approximation
with $\Delta t=\frac{4}{256}$ and the same spatial discretization.
Figure~\ref{fig:convergence_traveling_wave} shows that we can observe the optimal 
convergence rates of the method.
\end{example}

\begin{figure}
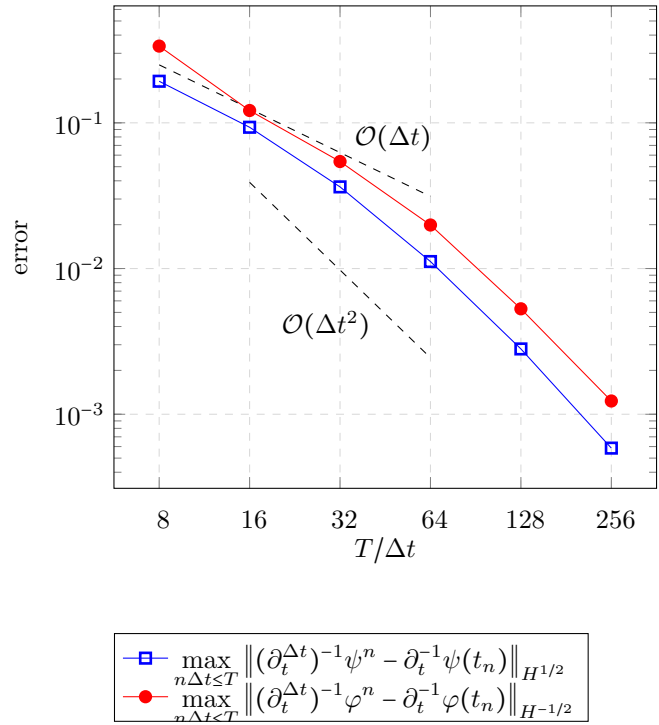

\centering
  \includeTikzOrEps{figures/convergence_tw}
\caption{Convergence for Example~\ref{ex:fd_convergence} }
\label{fig:convergence_traveling_wave}
\end{figure}

\subsection{Scattering from a nonconvex domain}
\label{subsect:nonconvex_scatter}
In Section~\ref{subsect:high_regularity}, we predicted optimal order of convergence, as long as the exact
solution is sufficiently smooth. This was the case of the numerical examples in
Examples~\ref{ex:scalar_convergence} and \ref{ex:fd_convergence}. In order to see whether this assumption
is indeed not always satisfied, we look at scattering from a more complex domain $\Omega^-$.

\begin{example}
\label{example:nonconvex}
We choose $\Omega^-$ the same as in \cite[Section 6.2.4]{banjai_algorithms}, as a body with a cavity
in which the wave can be trapped; see Figure~\ref{fig:exact_nonconvex}. We used $g(\mu):=\mu+\abs{\mu}\mu$
and $u^{inc}(x,t):=F(t - d\cdot x)$ with $F(s):=-\cos(\omega s) \, e^{- \left( \frac{s-A}{\sigma}\right)^2}$.
The parameters were $\omega:=\pi/2$, $\sigma=0.5$, $A=2.5$  and $d:=\sqrt{\frac{4}{5}}(1,0.5,0)^T$. For discretization we used a mesh 
of size $h \sim 0.05$ with discontinuous piecewise constants for $X_h$ and continous piecewise linears for $Y_h$,
  This gives $\operatorname{dim}(X_h)=15756$ and $\operatorname{dim}(Y_h)=7880$.
In Figure \ref{fig:convergence_nonconvex}, we see that we still get the full convergence rate $\bigO\left(\Delta t ^2\right)$.
\end{example}
\begin{figure}
\centering
  \includegraphics[width=0.5\textwidth]{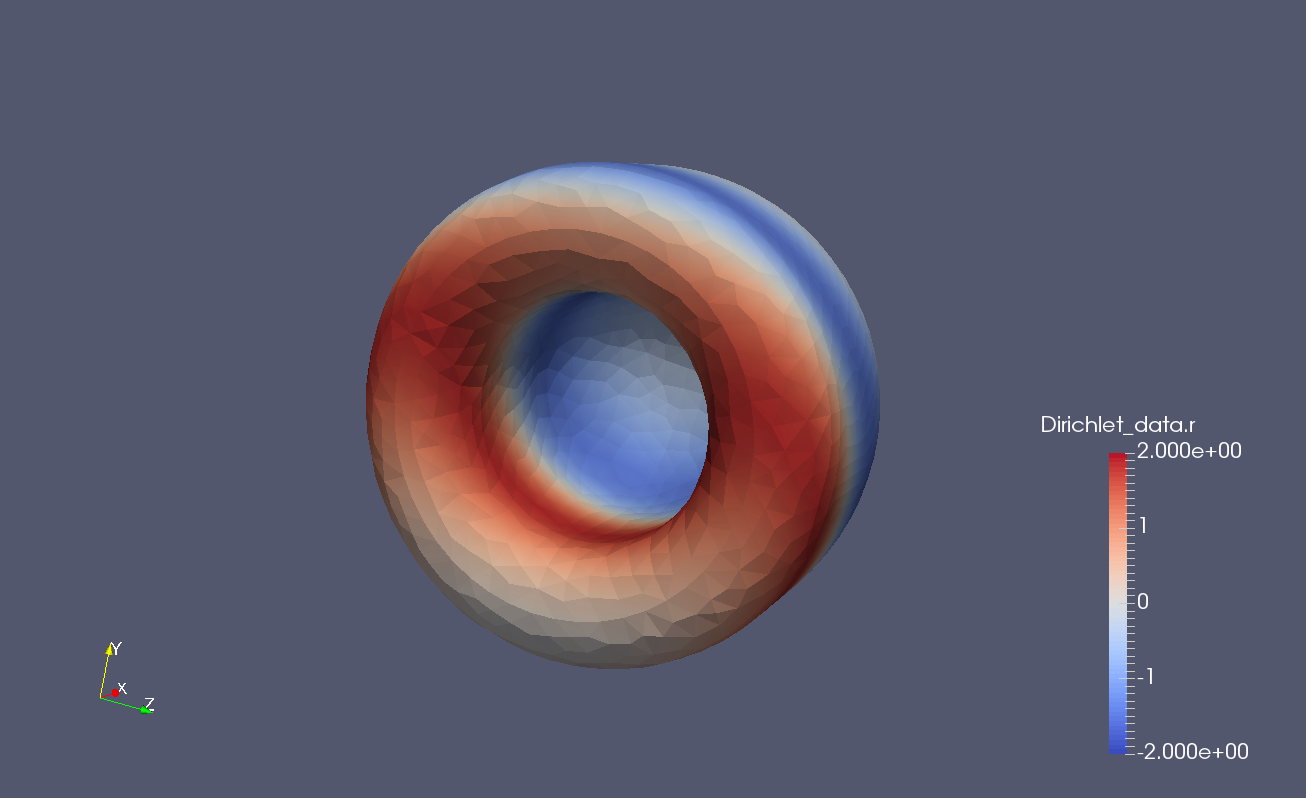}
  \caption{Exact solution to Example~\ref{example:nonconvex} for $t=2$}
  \label{fig:exact_nonconvex}
\end{figure}

\begin{figure}
\centering
\includeTikzOrEps{figures/convergence_nonconvex}
  \caption{Convergence of the BDF2 method for Example~\ref{example:nonconvex}}
  \label{fig:convergence_nonconvex}
\end{figure}

Due to the computational effort involved, it is hard to say
 for which parameters a lowered convergence order manifests
and whether it is just due to some preasymptotic behavior. In order to better understand the behavior, we consider
the following model problem.
\begin{example}
\label{example:scalar_interior}
  We again consider the unit sphere, with an incoming wave that is constant in space.
  In order to construct a model problem, which is difficult for the numerical method we consider the extreme case
  of a ``completely trapping sphere'', i.e., the wave starts inside the sphere and has no way of escape, and investigate the convergence behaviour.
  This means we solve the interior boundary integral problem:
  \begin{align*}
    \begin{pmatrix}
      s V(s)  & -K - \frac{1}{2} \\
      \frac{1}{2}+K^t & s^{-1} W(s)    
      \end{pmatrix}
      \colvec{\varphi\\ \psi} + \colvec{0\\ g\left(\psi + \dot{u}^{inc}\right)}
              &= \colvec{0 \\ 0 }.
  \end{align*}
  We chose $g(\mu):=\frac{\mu}{4} + \mu\abs{\mu}$ and $u^{inc}(t):=\cos(\omega t) e^{-\left(\frac{t-A}{\sigma}\right)^2}$, with
  $A=2$, $\sigma=0.5$ and $\omega=4\pi$.
  In Figure~\ref{fig:convergence_scalar_interior}, we see that the BDF2 method no longer delivers the optimal convergence
  rate of $\Delta t^2$. For testing purposes, we also tried a convolution quadrature method based on the 2-step RadauIIA Runge-Kutta method which
  has classical order $3$ but only delivers second order convergence. Note, however, that even in the linear case Runge-Kutta based CQ  exhibits order reduction \cite{BanLM}.
\end{example}

\begin{figure}
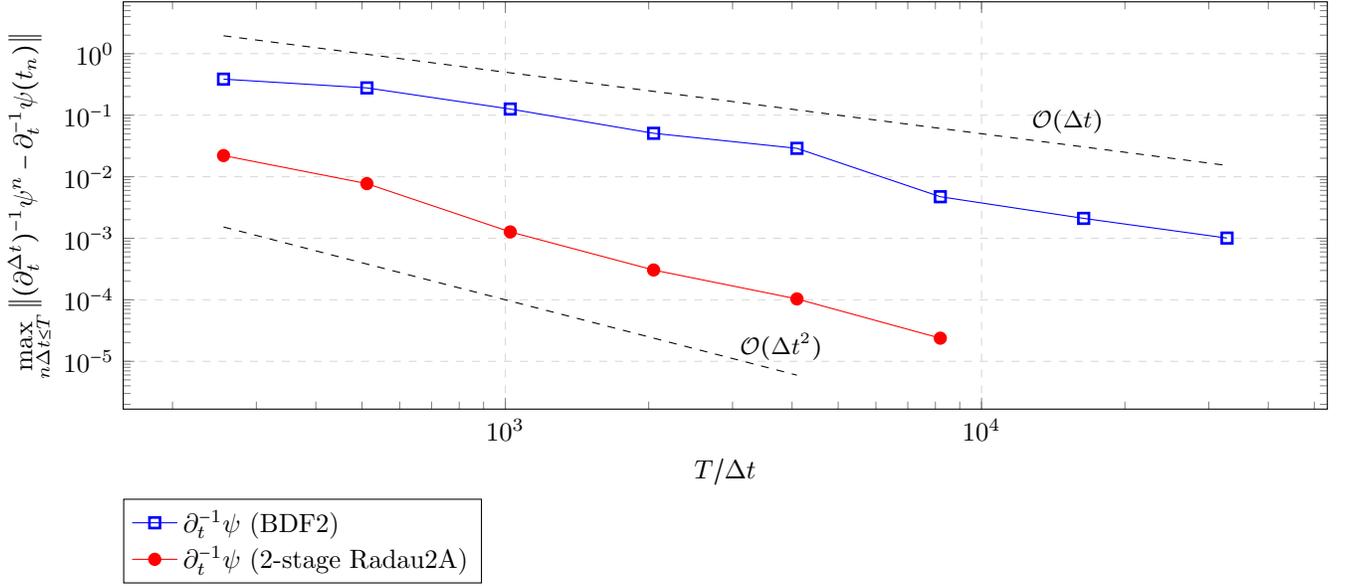

  \centering
  \includeTikzOrEps{figures/convergence_scalar_interior}
  \caption{Convergence for Example~\ref{example:scalar_interior}}
  \label{fig:convergence_scalar_interior}
\end{figure}

\appendix
\section{Analysis based on integral equations and the Herglotz theorem} \label{appendix:herglotz}
\label{appendix:herglotz_analysis}
In this section, we would like to sketch a possible alternative approach to analysing the discretization scheme introduced in Problem~\ref{problem:fully_discrete_int_eq}. 
As these techniques require more regularity than the semigroup approach, we only consider the case of a smooth exact solution, analogous to Section~\ref{subsect:high_regularity}.

The basis of the approach is formed by the following two propositions, the first of which was proved in \cite{bls_fembem}.
\begin{proposition}
  \label{lemma:coercivity_calderon_td}
  For all $\sigma > 0$, $T>0$, there exists a constant $\beta > 0$, only dependent on $\Gamma$, $\sigma$ and $T$, such that
  \begin{align}
    \label{eq:corecivity_calderon_td}
    \int_{0}^{T}{ e^{-\sigma t}
    \dualproduct{B_{imp}(\partial_t) \colvec{\varphi \\ \psi }} {\colvec{\varphi \\ \psi}} \;dt}
 &\geq \beta \int_{0}^{T}{ e^{-\sigma t} \triplenorm{(\partial^{-1}_t \varphi, \partial_t^{-1} \psi)}^2 \;dt}
  \end{align}
  
  Analogously in the time discrete case, 
  for all $\sigma > 0$, $N \in \N$ with $T:=N \Delta t$ and $\rho:=e^{- \sigma T}$, there exists a constant $\beta > 0$, only dependent on $\Gamma$, $\sigma$ and $T$, such that
  \begin{align}
    \label{eq:corecivity_calderon_td_fd}
    \Delta t \sum_{n=0}^{N}{ \rho^{2n} \dualproduct{B_{imp}(\partial^{\Delta t}_t) \colvec{\varphi \\ \psi }} {\colvec{\varphi \\ \psi}} \;dt}
 &\geq \beta \Delta t \sum_{n=0}^{N}{ \rho^{2n} \triplenorm{\left((\dd)^{-1} \varphi, (\dd)^{-1} \psi\right)}^2 \;dt}.
  \end{align}
\end{proposition}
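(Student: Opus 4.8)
The plan is to take the time-continuous estimate~\eqref{eq:corecivity_calderon_td} as given, since it is precisely the statement proved in~\cite{bls_fembem}, and to concentrate on the fully discrete estimate~\eqref{eq:corecivity_calderon_td_fd}, which I would obtain by transporting the frequency-domain coercivity of Lemma~\ref{lemma:coercivity_calderon_ld} to the discrete convolution setting via the $Z$-transform. The engine is the discrete Parseval identity on the circle $\abs{z}=\rho$: for causal sequences $(a^n)$, $(b^n)$ with sufficient decay,
\begin{align*}
  \sum_{n=0}^{\infty}{\rho^{2n} \dualproduct{a^n}{b^n}}
  &= \frac{1}{2\pi}\int_{0}^{2\pi}{\dualproduct{\widehat{a}(\rho e^{\ii\vartheta})}{\widehat{b}(\rho e^{\ii\vartheta})}\;d\vartheta}.
\end{align*}
Taking $a:=B_{\text{imp}}(\dd)\colvec{\varphi \\ \psi}$ and $b:=\colvec{\varphi \\ \psi}$ and using the convolution rule $\mathscr{Z}\big(B_{\text{imp}}(\dd)(\varphi,\psi)\big)(z)=B_{\text{imp}}(s(z))\,\widehat{(\varphi,\psi)}(z)$ with $s(z):=\delta(z)/\Delta t$, the left-hand side becomes the circle integral of $\Re\dualproduct{B_{\text{imp}}(s(z))\widehat{(\varphi,\psi)}}{\widehat{(\varphi,\psi)}}$. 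The right-hand side is treated identically, noting $\mathscr{Z}\big((\dd)^{-1}\varphi\big)=s(z)^{-1}\widehat{\varphi}$, so its Parseval image carries the weight $\abs{s(z)}^{-2}$.

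First I would reduce the finite sums to infinite ones. Extending $(\varphi^n,\psi^n)$ by zero for $n>N$, causality leaves $[B_{\text{imp}}(\dd)(\varphi,\psi)]^n$ unchanged for $n\le N$, while the test slot vanishes for $n>N$; hence the left-hand side of~\eqref{eq:corecivity_calderon_td_fd} equals the associated infinite sum. On the right-hand side the discrete primitive $(\dd)^{-1}\varphi$ produces a nonnegative tail for $n>N$, so the infinite sum dominates the finite one. Consequently it suffices to prove the inequality for the infinite sums, where the two Parseval identities apply verbatim.

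The main obstacle is the pointwise symbol estimate on the integration circle. After inserting Lemma~\ref{lemma:coercivity_calderon_ld} with $s=s(z)$ under the integral, the claim reduces to
\begin{align*}
  \min\left(1,\abs{s(z)}^2\right)\frac{\Re s(z)}{\abs{s(z)}^2}
  \geq \beta'\,\abs{s(z)}^{-2},
  \qquad\text{equivalently}\qquad
  \min\left(1,\abs{s(z)}^2\right)\Re s(z) \geq \beta',
\end{align*}
uniformly for $\abs{z}=\rho$ and all admissible $\Delta t$. Here the $A$-stability of BDF1 and BDF2, which guarantees $\Re\delta(z)>0$ for $\abs{z}<1$, is decisive: on the compact circle $\abs{z}=\rho<1$ it furnishes $\Re\delta(z)\ge m(\rho)>0$ and $\abs{\delta(z)}\ge \mu(\rho)>0$, whence $\Re s(z)\ge m(\rho)/\Delta t$ and $\abs{s(z)}\ge \mu(\rho)/\Delta t$. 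If $\abs{s(z)}\ge 1$, the bound follows from $\Re s(z)\ge m(\rho)/\Delta t\ge m(\rho)/T$ using $\Delta t\le N\Delta t=T$; if $\abs{s(z)}<1$, that regime forces $\Delta t>\mu(\rho)$, and then $\abs{s(z)}^2\Re s(z)\ge \mu(\rho)^2 m(\rho)/\Delta t^3\ge \mu(\rho)^2 m(\rho)/T^3$. This is exactly where the dependence of $\beta$ on $\sigma$ and $T$ enters, through $\rho=e^{-\sigma T}$, the quantitative constants $m(\rho),\mu(\rho)$, and the bound $\Delta t\le T$.

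Combining this symbol estimate with the Parseval identities and the finite-to-infinite reduction yields~\eqref{eq:corecivity_calderon_td_fd}. The only remaining routine points are the explicit verification of the lower bounds $m(\rho),\mu(\rho)$ for the two concrete generating functions $\delta(z)=1-z$ and $\delta(z)=\tfrac32-2z+\tfrac12 z^2$, and the check that the weighted infinite sums converge so that Parseval is legitimate — both of which follow from $\rho<1$ and the at most polynomial growth of the discrete primitive.
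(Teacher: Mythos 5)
The paper does not actually prove this proposition---it is attributed wholesale to \cite{bls_fembem}---so there is no in-paper argument to compare against line by line; what you have written is a correct reconstruction of the discrete half along the standard route used in that reference. The three ingredients are all in order: the extension-by-zero argument correctly reduces the finite sums to infinite ones (causality keeps the left-hand side term-by-term unchanged, while the right-hand side only gains nonnegative terms); the weighted discrete Plancherel identity on $\abs{z}=\rho$ converts both sides into circle integrals of the symbol, with $\mathscr{Z}\big((\dd)^{-1}\varphi\big)=s(z)^{-1}\widehat{\varphi}$ producing exactly the weight $\abs{s(z)}^{-2}$ on the right; and the pointwise reduction of Lemma~\ref{lemma:coercivity_calderon_ld} to $\min(1,\abs{s}^{2})\,\Re s\geq\beta'$, settled by your two-case argument using $\Re\delta(z)\geq m(\rho)>0$ and $\abs{\delta(z)}\geq\mu(\rho)>0$ on the compact circle together with $\Delta t\leq T$, is sound (for BDF1 and BDF2 the generating function has no zeros in $\abs{z}<1$, so $\mu(\rho)>0$ indeed holds). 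Two small remarks. First, the time-domain pairing should be replaced by its real part before invoking Lemma~\ref{lemma:coercivity_calderon_ld}; this is harmless since the sequences are real-valued, but it should be said. Second, your constants $m(\rho),\mu(\rho)$ degenerate as $\rho\to 1$, and the argument works precisely because the proposition fixes $\rho=e^{-\sigma T}$, bounded away from $1$ uniformly in $\Delta t$. Had the discrete weight been the more natural $e^{-\sigma t_n}$, i.e.\ $\rho=e^{-\sigma\Delta t}\to 1$, the compactness bound $m(\rho)$ would shrink like $\Delta t$ and one would instead need the sharper A-stability estimate $\Re\big(\delta(z)/\Delta t\big)\gtrsim\sigma$ on $\abs{z}=e^{-\sigma\Delta t}$; this is not a gap for the statement as written, but it is the one place where your proof is tied to the particular choice of $\rho$.
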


We also need a continuity result for the boundary operator $B(s)$.
\begin{proposition}
  There exists a constant $C>0$, depending only on $\Gamma$, such that
  \begin{align}
    \label{eq:continuity_calderon_ld}
    \triplenorm{B_{\text{imp}}(s) \colvec{\varphi \\ \psi }}
 &\leq C \abs{s}^2 \triplenorm{ \colvec{\varphi \\ \psi }}.
  \end{align}
\end{proposition}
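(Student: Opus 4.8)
The plan is to read off the continuity estimate directly from the explicit frequency dependence of the four layer operators making up $B(s)$, bounding $B_{\text{imp}}(s)$ componentwise and then recombining in the product norm $\triplenorm{\cdot}$. First I would dispose of the algebraic perturbation in~\eqref{eq:def:B_imp}: the block $\begin{pmatrix}0 & -\frac12 I \\ \frac12 I & 0\end{pmatrix}$ sends $\colvec{\varphi \\ \psi}$ to $\colvec{-\frac12 \psi \\ \frac12 \varphi}$, whose $\triplenorm{\cdot}$-norm is exactly $\frac12\triplenorm{(\varphi,\psi)}$ by the definition~\eqref{eq:def_triplenorm}. This is a lower-order contribution that is absorbed into the right-hand side, so it remains to bound the genuine Calderón part $B(s)$ from~\eqref{eq:def:B}.

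Second, I would import the standard Laplace-domain mapping property bounds for the acoustic layer operators (the references listed in Definition~\ref{def:integral_operators}, in the same form already used for Lemma~\ref{lemma:coercivity_calderon_ld}), which give, with a constant depending only on $\Gamma$ on the relevant range of $s$, estimates of the schematic form $\norm{V(s)}_{H^{-1/2}(\Gamma)\to H^{1/2}(\Gamma)}\lesssim |s|$, $\norm{K(s)}_{H^{1/2}(\Gamma)\to H^{1/2}(\Gamma)}\lesssim|s|$, $\norm{K^t(s)}_{H^{-1/2}(\Gamma)\to H^{-1/2}(\Gamma)}\lesssim|s|$, and $\norm{W(s)}_{H^{1/2}(\Gamma)\to H^{-1/2}(\Gamma)}\lesssim|s|^2$. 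Applying the scalings built into~\eqref{eq:def:B}, the scaled entries obey $\norm{sV(s)}\lesssim|s|^2$, while $\norm{K(s)}$, $\norm{K^t(s)}$ and $\norm{s^{-1}W(s)}$ are all $\lesssim|s|$; the single-layer block $sV(s)$ is thus the dominant term and is responsible for the factor $|s|^2$.

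Third, I would reassemble. Writing $B_{\text{imp}}(s)\colvec{\varphi \\ \psi}=\colvec{sV(s)\varphi+(K(s)-\frac12)\psi \\ (\frac12-K^t(s))\varphi+s^{-1}W(s)\psi}$, the triangle inequality applied to the two components in $H^{1/2}(\Gamma)$ and $H^{-1/2}(\Gamma)$ respectively bounds the first component by $C\left(|s|^2\norm{\varphi}_{H^{-1/2}(\Gamma)}+|s|\norm{\psi}_{H^{1/2}(\Gamma)}\right)$ and the second by $C\left(|s|\norm{\varphi}_{H^{-1/2}(\Gamma)}+|s|\norm{\psi}_{H^{1/2}(\Gamma)}\right)$. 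Squaring, adding, invoking $|s|\lesssim|s|^2$, and using the definition of $\triplenorm{\cdot}$ then yields the claimed bound $\triplenorm{B_{\text{imp}}(s)(\varphi,\psi)}\lesssim|s|^2\triplenorm{(\varphi,\psi)}$.

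The main obstacle is purely the careful bookkeeping of the powers of $s$ together with the range of validity. The frequency-explicit operator bounds are sharp only up to factors of $\Re(s)$ and $\min(1,\Re(s))$, so the clean $|s|^2$ estimate with a $\Gamma$-only constant is to be read on the region where these factors are controlled (in particular $|s|\gtrsim 1$, as on the convolution-quadrature contour where the proposition is applied); the step $|s|\lesssim|s|^2$ used above tacitly relies on the same restriction. Once that range is fixed, everything reduces to the routine combination of the four operator bounds.
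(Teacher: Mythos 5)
Your argument is correct and is essentially the paper's proof written out in full: the paper disposes of this proposition with a one-line citation to the frequency-explicit bounds of Laliena and Sayas, and your componentwise bookkeeping (absorbing the skew $\pm\frac12 I$ block, identifying $sV(s)$ as the dominant entry) is exactly what that citation leaves implicit. Your closing caveat --- that the literature bounds carry factors of $\Re(s)$ and $\min(1,\Re(s))$, so the clean $\abs{s}^2$ estimate with a $\Gamma$-only constant should be read on the region where these are controlled --- is apt and applies equally to the paper's own proof.
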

\begin{proof}
  This inequality directly follows from the bounds shown in \cite{laliena_sayas}.
\end{proof}

In order to formulate the next result, we need the following Sobolev spaces for $r \geq 0$ and a Hilbert space $X$: 
\begin{align*}
  H^r_0((0,T),X):=\left\{g|_{(0,T)}: g \in H^r(\R,X) \text{ with } g \equiv 0 \text{ on } (-\infty,0) \right\}.
\end{align*}

We illustrate the approach and the difficulties it entails by focussing on the time discretization only.
\begin{theorem}
   Let $u$ solve \eqref{eq:ode_semigroup}, write $\widetilde{u}(t):=u(t) - u^{inc}(t)$ and define the traces $\psi(t):= \gamma^+ \dot{\widetilde{u}}(t)$ and
   $\varphi(t):=-\partial_n^+ \widetilde{u}(t)$.
   Let $X_h:=H^{-1/2}(\Gamma)$ and $Y_h:=H^{1/2}(\Gamma)$ be the full spaces and $(\varphi_{ie},\psi_{ie})$ the time-discrete numerical approximation 
   defined by~(\ref{eq:fully_discrete_int_eq}). Assume that $(\varphi,\psi) \in H_0^{r+1}\left((0,T), H^{1/2}(\Gamma)\times H^{-1/2}(\Gamma)\right)$, for $r > 5/2$.

   Then the following estimates holds
   \begin{align*}
     \sum_{n=0}^{N}{ \rho^{2n} \triplenorm{\colvec{\partial_t^{-1} \psi(t_n) - \left[\left(\dd\right)^{-1} \psi_{ie}\right]^n \vspace{1mm}\\ 
     \partial_t^{-1} \varphi(t_n) - \left[\left(\dd\right)^{-1} \varphi_{ie}\right]^n }}^2}    
     &\lesssim \left(\Delta t \right)^{2\min\left((r-2)\frac{p}{p+1},p\right)} \left( \norm{\varphi}^2_{H_0^{r+1}((0,T),H^{-1/2})} +  \norm{\psi}^2_{H_0^{r+1}((0,T),H^{1/2})} \right),
   \end{align*}
   for all $N$ with $N \Delta t \leq T$, with constants
   that depend only on $\Gamma$ and the end time $T$.
\end{theorem}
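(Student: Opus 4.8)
The plan is to transpose the energy argument that underlies the linear coercivity theory, using the monotonicity of $g$ in place of linearity. Since $u$ solves \eqref{eq:ode_semigroup}, its traces $(\varphi,\psi)$ satisfy the continuous integral equation \eqref{eq:continuous_int_eq}, whereas $(\varphi_{ie},\psi_{ie})$ satisfy the fully discrete equation \eqref{eq:fully_discrete_int_eq}, which for the full spaces $X_h=H^{-1/2}(\Gamma)$, $Y_h=H^{1/2}(\Gamma)$ reduces to $\bdryinterpY=I$. Inserting the exact traces into the discrete scheme produces the convolution quadrature consistency defect
\begin{align*}
  \delta^n := \left[B_{\text{imp}}(\dd)\colvec{\varphi \\ \psi}\right]^n
  - \left[B_{\text{imp}}(\partial_t)\colvec{\varphi \\ \psi}\right](t_n),
\end{align*}
and, writing $e_\varphi^n:=\varphi(t_n)-\varphi_{ie}^n$ and $e_\psi^n:=\psi(t_n)-\psi_{ie}^n$, subtracting the two equations yields
\begin{align*}
  \left[B_{\text{imp}}(\dd)\colvec{e_\varphi \\ e_\psi}\right]^n
  + \colvec{0 \\ g(\psi(t_n)+\dot{u}^{inc}(t_n)) - g(\psi_{ie}^n+\dot{u}^{inc}(t_n))}
  = \delta^n.
\end{align*}

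First I would run the discrete energy estimate. Testing this identity with $(e_\varphi^n,e_\psi^n)$ and summing against the weights $\rho^{2n}\Delta t$ (with $\rho=e^{-\sigma T}$), the nonlinear contribution is nonnegative: the two arguments of $g$ differ exactly by $e_\psi^n$, so by $g'\geq 0$ (Assumption~\ref{assumption:nonlinearity_g}(\ref{assumption:nonlinearity_g_prime})) each summand $\dualproduct{g(\cdot)-g(\cdot)}{e_\psi^n}\geq 0$, and this term may be moved to the right and discarded. The discrete coercivity \eqref{eq:corecivity_calderon_td_fd} then bounds the remaining left-hand side from below by $\beta\,\Delta t\sum_n\rho^{2n}\triplenorm{((\dd)^{-1}e_\varphi,(\dd)^{-1}e_\psi)^n}^2$, which is precisely the quantity on the left of the claimed estimate. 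It remains to control the defect pairing $\Delta t\sum_n\rho^{2n}\dualproduct{\delta^n}{(e_\varphi^n,e_\psi^n)}$. Since coercivity delivers the \emph{integrated} error $E:=((\dd)^{-1}e_\varphi,(\dd)^{-1}e_\psi)$ while the pairing involves $e=\dd E$, I would pass to the $\rho$-weighted frequency domain by discrete Parseval and perform an Abel summation --- the discrete analogue of the integration by parts in the Bamberger--Ha Duong bound \eqref{eq:corecivity_calderon_td} --- to transfer one discrete time difference from the error onto the defect. This produces an estimate of the form $\|E\|_\rho\,\|\dd\,\delta\|_{\rho,\ast}$ in the integrated, dual norm, after which Young's inequality absorbs $\|E\|_\rho$ into the left-hand side.

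The crux, and the main obstacle, is the estimate of $\dd\,\delta$ in the $\rho$-weighted dual integrated norm. By the continuity bound \eqref{eq:continuity_calderon_ld}, $B_{\text{imp}}(s)$ grows like $|s|^2$, so $\delta$ is the convolution quadrature consistency error of a symbol of order $\mu=2$. The standard consistency theory (\cite[Theorem~3.1]{lubich_cq1}; see also \cite{lubich_cq2}) shows that reaching full order $p$ costs roughly $\mu+p$ additional time derivatives of the data, so that for infinitely smooth traces one would obtain order $p$, while the price of the finite regularity $(\varphi,\psi)\in H^{r+1}_0$ is order reduction. The precise exponent follows by interpolating between the pure stability estimate (order $0$, available once $r$ exceeds $\mu=2$) and the smooth estimate (order $p$, available at $r=p+3$); linear interpolation gives the rate $(r-2)\tfrac{p}{p+1}$, the characteristic order-reduction factor $\tfrac{p}{p+1}$ of convolution quadrature for nonsmooth data, capped at $p$ by the $\min$. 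The hypothesis $r>5/2$ guarantees, through Sobolev embedding in time, that the nodal samples $\delta^n$ and the contour integrals defining the quadrature weights are well defined and that all constants are uniform in $\rho$. Substituting this defect bound into the energy estimate and squaring yields the stated rate $2\min\!\big((r-2)\tfrac{p}{p+1},p\big)$.
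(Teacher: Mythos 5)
Your proposal is correct and follows essentially the same route as the paper's own proof in Appendix~A: subtract the continuous and discrete integral equations, discard the nonnegative monotone term $\dualproduct{g(\cdot)-g(\cdot)}{e_\psi}$, invoke the discrete coercivity \eqref{eq:corecivity_calderon_td_fd}, shift one discrete derivative from the error onto the residual via Plancherel, and estimate $\dd$ applied to the consistency defect of the order-two symbol $B_{\text{imp}}$ by the Lubich convolution-quadrature error theory, which yields the exponent $\min\bigl((r-2)\tfrac{p}{p+1},p\bigr)$. The only cosmetic difference is that the paper cites \cite[Theorem~3.3]{lubich_94} directly for that exponent where you argue it by interpolation between the stability and smooth-data regimes.
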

\begin{proof}
  Since we want to make use of the monotonicity of  of $g$, we start by shifting the Dirichlet traces  by $\dot{u}^{inc}$.
  Writing $\widetilde{\psi}:=\psi+ \dot{u}^{inc}$ and $\widetilde{\psi}_{ie}:=\psi_{ie}+ \dot{u}^{inc}$.
  We also  write $\Xi:=(\varphi,\psi+ \dot{u}^{inc})$ and $\Xi_{ie}:=(\varphi_{ie},\psi_{ie} + \dot{u}^{inc})$ for the pairs of functions and $E:=\Xi-\Xi_{ie}$ for the discretization error.
  We calculate, using the positivity property of Proposition~\ref{lemma:coercivity_calderon_td}:
  \begin{align*}
    \sum_{n=0}^{N}{\rho^{2n} \triplenorm{ \left[(\dd)^{-1} E \right]^n}^2}
    &\lesssim \sum_{n=0}^{N}{\rho^{2n} \dualproduct{B_{\text{imp}}(\dd) E}{E} 
      + \dualproduct{g(\widetilde{\psi}) - g(\widetilde{\psi}_{ie})}{\widetilde{\psi} - \widetilde{\psi}_{ie}}} \\ 
    &=\sum_{n=0}^{N}{\rho^{2n} \dualproduct{B_{\text{imp}}(\partial_t) \Xi - B_{\text{imp}}(\dd) \Xi_{ie}}{E} + \dualproduct{g(\widetilde{\psi}) - g(\widetilde{\psi}_{ie})}{\widetilde{\psi} - \widetilde{\psi}_{ie}} + \dualproduct{R_n(\Xi)}{E}} \\
  \end{align*}
  with the residual $R_n(\Xi):=\left[\left(B_{\text{imp}}(\partial_t) - B_{\text{imp}}(\dd) \right) \Xi\right](t_n) $.
  Since $\Xi$ and $\Xi_{ie}$ solve (\ref{eq:continuous_int_eq}) and (\ref{eq:fully_discrete_int_eq}) respectively, this gives the estimate:
  \begin{align*}
    \sum_{n=0}^{N}{\rho^{2n} \triplenorm{ \left[(\dd)^{-1} E \right]^n}^2}&\lesssim \sum_{n=0}^{N}{\rho^{2n}\dualproduct{R_n(\Xi)}{E}} 
                                                                            = \sum_{n=0}^{N}{\rho^{2n}\dualproduct{\dd R_n(\Xi)}{(\dd)^{-1}E}}
  \end{align*}
  (the second equality can be checked by using the Plancherel formula).
  The Cauchy-Schwarz inequality then gives the final estimate
  \begin{align*}
    \sum_{n=0}^{N}{\rho^{2n} \triplenorm{ \left[(\dd)^{-1} E \right]^n}^2}&\lesssim \sum_{n=0}^{N}{\rho^{2n} \triplenorm{\dd R_n(\Xi)}^2}.
  \end{align*}
  From the theory of convolution quadrature (\cite[Theorem 3.3]{lubich_94}) the residual can be estimated by:
  \begin{align*}
    \triplenorm{\dd R_n(\Xi)} &\lesssim  \triplenorm{ \dd \left[\left(B_{\text{imp}}(\partial_t) - B_{\text{imp}}(\dd) \right) \Xi\right]^n} \\
                              &\lesssim \triplenorm{ \left[\left(B_{\text{imp}}(\partial_t)\partial_t - B_{\text{imp}}(\dd) \dd \right) \Xi\right]^n}+
                                \triplenorm{  \left[\left(\partial_t - \dd \right)\left(B_{\text{imp}}(\partial_t) - B_{\text{imp}}(\dd) \right)\Xi\right]_n} \\
                              &\lesssim \left(\Delta t \right)^{\min\left((r-2)\frac{p}{p+1},p\right)} \left( \norm{\varphi}_{H_0^{r+1}((0,T),H^{-1/2})}
                                +  \norm{\widetilde{\psi}}_{H_0^{r+1}((0,T),H^{1/2})} \right),
  \end{align*}
  where in the last step we applied \cite[Theorem 3.3]{lubich_94} and absorbed the second contribution as $\partial_t$ is of a lower order of differentiation than $B(\partial_t)$, see (\ref{eq:continuity_calderon_ld}).
\end{proof}

\begin{remark} The fact that with this approach convergence can be proved more directly, at least for smooth solutions, without resorting to the results about the approximation of semigroups \cite{nevanlinna}, is an advantage. On the other hand, the requirements on the regularity of the exact solution is strictly larger than what is needed in Proposition~\ref{prop:semigroup_approx} ($p+4$ instead of $p+1$ continuous derivatives for full 
  convergence rates).  If we do not make any additional regularity assumptions, this new approach does not provide any predictions in regards to convergence. Additionally, the dependence on the end-time $T$ is much less clear (in general it will be some polynomial $\bigO(T^n)$, $n \in \N$). Similarly, when analysing the discretization error with regards to the spatial semidiscretization, 
  we also require at least one time derivative, due to the weak norm on the left hand-side that needs to be compensated by integration by parts. 
\end{remark}

\bibliographystyle{amsalpha}
\bibliography{bibliography}
\end{document}